\numberwithin{equation}{section}
\theoremstyle{definition}
	\newtheorem{definition}{Definition}[section]
	\numberwithin{definition}{section}
	\newtheorem{remark}[definition]{Remark}
	\newtheorem{example}[definition]{Example}
	\newtheorem{convention}[definition]{Convention}
\theoremstyle{plain}
	\newtheorem{lemma}[definition]{Lemma}
	\newtheorem{proposition}[definition]{Proposition}
	\newtheorem{theorem}[definition]{Theorem}
\newcommand{\Deop}{\Delta^{\mathrm{op}}}
\newcommand{\lam}{\lambda}
\newcommand{\Om}{\Omega}
\renewcommand{\th}{\theta}
\newcommand{\C}{\mathbb{C}}
\newcommand{\R}{\mathbb{R}}
\newcommand{\Pca}{\mathcal{P}}
\newcommand{\Lca}{\mathcal{L}}
\newcommand{\SL}[2]{\rm{SL}(#1,#2)}
\newcommand{\map}[3]{#1\colon#2\rightarrow#3}
\newcommand{\deq}{\mathrel{\mathop:}=}
\newcommand{\arccot}{\mathrm{arccot}}
\begin{document}

\title{Bounded cohomology via partial differential equations, I}

\author[T. Hartnick]{Tobias Hartnick}
\address{Mathematics Department, Technion, Haifa, 32000, Israel}
\email{tobias.hartnick@gmail.com}

\author[A. Ott]{Andreas Ott}
\address{Centre for Mathematical Sciences, University of Cambridge, Wilberforce Road, Cambridge CB3 0WB, United Kingdom}
\email{a.ott@dpmms.cam.ac.uk}

\subjclass[2010]{20J06, 22E41, 35F35, 35A09, 35A30}

\begin{abstract}
	We present a new technique that employs partial differential equations in order to explicitly construct primitives in the continuous bounded cohomology of Lie groups. As an application, we prove a vanishing theorem for the continuous bounded cohomology of $\SL{2}{\R}$ in degree four, establishing a special case of a conjecture of Monod.
\end{abstract}

\maketitle

\tableofcontents

\section{Introduction}
\label{SectionIntroduction}

Ever since Gromov's seminal paper \cite{Gromov/Volume-and-bounded-cohomology}, bounded cohomology of discrete groups has proved a useful tool in geometry, topology and group theory. In recent years the scope of bounded cohomology has widened considerably. An important step was taken by Burger and Monod, who extended the theory to the category of locally compact second countable groups under the name of continuous bounded cohomology \cite{Burger/Bounded-cohomology-of-lattices-in-higher-rank-Lie-groups,Monod/Continuous-bounded-cohomology-of-locally-compact-groups}. Not only did this lead to a breakthrough in the understanding of bounded cohomology of lattices in Lie groups \cite{Burger/Bounded-cohomology-of-lattices-in-higher-rank-Lie-groups}, but also triggered a series of discoveries in rigidity theory (e.\,g.\,\cite{Burger/Continuous-bounded-cohomology-and-applications-to-rigidity-theory,Burger/Boundary-maps-in-bounded-cohomology.-Appendix-to:-Continuous-bounded-cohomology-and-applications-to-rigidity-theory-Geom.Funct.A,Burger/Bounded-cohomology-and-totally-real-subspaces-in-complex-hyperbolic-geometry,Burger/Bounded-Kahler-class-rigidity-of-actions-on-Hermitian-symmetric-spaces,Bucher/A-dual-interpretation-of-the-Gromov-Thurston-proof-of-Mostow-rigidity-and-volume-rigidity-for-representations-of-hyperbolic-latt,Monod/Cocycle-superrigidity-and-bounded-cohomology-for-negatively-curved-spaces,Monod/Orbit-equivalence-rigidity-and-bounded-cohomology,Chatterji/The-median-class-and-superrigidity-of-actions-on-CAT0-cube-complexes,Hamenstadt/Isometry-groups-of-proper-CAT0-spaces-of-rank-one,Hamenstadt/Isometry-groups-of-proper-hyperbolic-spaces}), higher Teichm\"uller theory (e.\,g.\,\cite{Burger/Higher-Teichmuller-Spaces:-from-SL2R-to-other-Lie-groups,Burger/Surface-group-representations-with-maximal-Toledo-invariant,Burger/Tight-homomorphisms-and-Hermitian-symmetric-spaces,Ben-Simon/On-weakly-maximal-representations-of-surface-groups}) and symplectic geometry (e.\,g.\,\cite{Polterovich/Growth-of-maps-distortion-in-groups-and-symplectic-geometry,Entov/Calabi-quasimorphism-and-quantum-homology}). At the same time, our understanding of the second bounded cohomology has improved. In particular, the approach originally developed for free groups \cite{Grigorchuk/Some-results-on-bounded-cohomology} and hyperbolic groups \cite{Epstein/The-second-bounded-cohomology-of-word-hyperbolic-groups} has been extended to larger classes of groups including mapping class groups \cite{Bestvina/Bounded-cohomology-of-subgroups-of-mapping-class-groups} and acylindrically hyperbolic groups \cite{Hull/Induced-quasi-cocycles-on-groups-with-hyperbolically-embedded-subgroups,Fujiwara/Bounded-cohomology-via-quasi-trees}. Moreover, there has been some progress in constructing bounded cohomology classes in higher degree \cite{Mineyev/Bounded-cohomology-characterizes-hyperbolic-groups,Hartnick/Surjectivity-of-the-comparison-map-in-bounded-cohomology-for-Hermitian-Lie-groups,Bucher/The-norm-of-the-Euler-class,Goncharov/Geometry-of-configurations-polylogarithms-and-motivic-cohomology}. 

On the other hand, our knowledge on vanishing results for bounded cohomology in higher degree is still very poor. It was already known to Johnson \cite{Johnson/Cohomology-in-Banach-algebras} that the bounded cohomology of an amenable group vanishes in all positive degrees. Here the primitive of a given cocycle is obtained by applying an invariant mean. In contrast, for non-amenable groups no general technique for constructing primitives is available so far. In particular, there is not a single non-amenable group whose bounded cohomology is known in all degrees. Actually, the situation is even worse. One may define the \emph{bounded cohomological dimension} of a group $\Gamma$ to be
\[
{\rm bcd}(\Gamma) \deq	 \sup \bigl\{ n \,\big|\, H_{b}^{n}(\Gamma; \R) \neq 0 \bigr\},
\]
where $H_{b}^{n}(\Gamma; \R)$ is the $n$-th bounded cohomology of $\Gamma$ with coefficients in the trivial module~$\R$. At present we do not even know whether there exists any group $\Gamma$ with ${\rm bcd}(\Gamma) \not \in \{ 0, \infty \}$.

The few vanishing results we have for the bounded cohomology of non-amenable groups are all based on the vanishing of all cocycles in the respective degree in some resolution. The most far-reaching results in this direction were achieved in \cite{Monod/On-the-bounded-cohomology-of-semi-simple-groups-S-arithmetic-groups-and-products} by choosing efficient resolutions. However, no such resolutions are known for dealing with the continuous bounded cohomology $H^n_{cb}(H; \R)$ of non-amenable connected Lie groups $H$. For such groups the most efficient resolution that is presently available is the boundary resolution \cite{Ivanov/Foundations-of-the-theory-of-bounded-cohomology,Burger/Bounded-cohomology-of-lattices-in-higher-rank-Lie-groups}. In this particular resolution cocycles vanish only in degree at most three; in degree greater than three there will inevitably be nonzero cocycles, and one faces the problem of finding primitives. This explains why the few existing vanishing results such as in \cite{Burger/Bounded-cohomology-of-lattices-in-higher-rank-Lie-groups,Burger/On-and-around-the-bounded-cohomology-of-SL2} do not go beyond degree three.

\smallskip

Our goal in this article is to develop a new approach to the construction of primitives in continuous bounded cohomology for real semisimple Lie groups. To demonstrate its effectiveness we settle the following special case of a conjecture due to Monod \cite[Problem A]{Monod/An-invitation-to-bounded-cohomology}.

\begin{theorem} \label{TheoremMain}
	Let $G$ be a connected real Lie group that is locally isomorphic to ${\rm SL}_2(\R)$. Then
\[
H^{4}_{cb}(G; \R) = 0.
\]
\end{theorem}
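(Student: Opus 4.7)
The plan is to represent classes in $H^{4}_{cb}(G;\R)$ by bounded measurable cocycles on the visual boundary of $\mathbb{H}^{2}$, reduce the cocycle equation to a partial differential equation on a low-dimensional moduli space, and then solve this equation for a bounded primitive.

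Since $G$ is locally isomorphic to $\SL{2}{\R}$, it acts via its projection to $\mathrm{PSL}_{2}(\R)$ on the boundary circle $S^{1}=\partial\mathbb{H}^{2}$, and this action is amenable and doubly ergodic with coefficients. By the Burger--Monod boundary resolution, $H^{4}_{cb}(G;\R)$ is computed by the complex of alternating $G$-invariant bounded measurable functions on $(S^{1})^{\bullet+1}$. Hence it suffices to show that every cocycle $c \in L^{\infty}_{\mathrm{alt}}((S^{1})^{5})^{G}$ admits a primitive $b \in L^{\infty}_{\mathrm{alt}}((S^{1})^{4})^{G}$ with $\delta b = c$ almost everywhere. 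Exploiting the $3$-transitivity of $\mathrm{PSL}_{2}(\R)$ on $S^{1}$, an invariant bounded function on $5$-tuples in generic cyclic position reduces to a bounded measurable function $\tilde c(\lambda_{1},\lambda_{2})$ of two independent cross-ratios, and an invariant bounded function on $4$-tuples reduces to a function $\tilde b(\lambda)$ of a single cross-ratio. The cocycle identity then becomes a functional equation of the shape
\begin{equation*}
\sum_{i=0}^{4} (-1)^{i}\, \tilde b\bigl(f_{i}(\lambda_{1},\lambda_{2})\bigr) \;=\; \tilde c(\lambda_{1},\lambda_{2}),
\end{equation*}
in which each $f_{i}$ is the birational function expressing the cross-ratio of the face opposite the $i$-th vertex in terms of $(\lambda_{1},\lambda_{2})$; two of the five faces are pure functions of $\lambda_{1}$ or of $\lambda_{2}$ alone.

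The central new idea, to which the title of the paper alludes, would be to convert this functional equation into a PDE. By applying well-chosen differential operators in $(\lambda_{1},\lambda_{2})$, one can kill the pure-single-variable summands and reduce the problem to a linear PDE whose right-hand side is an explicit differential expression in $\tilde c$ and whose unknown involves $\tilde b$ or one of its derivatives, composed with a few remaining $f_{i}$. The alternation symmetry of $c$, together with the $S_{5}$-action on the arguments, should yield enough further relations to make the system well-posed. Integrating the PDE along suitable characteristic curves and then inverting the change of variables would give an explicit formula for $\tilde b$ in terms of $\tilde c$. One then has to verify that (i) $\tilde b$ is bounded, (ii) the original functional equation is recovered exactly and not just up to a solution of the homogeneous problem, and (iii) $\tilde b$ extends measurably across the negligible strata where points collide or the cyclic order degenerates, so as to define an honest element of $L^{\infty}_{\mathrm{alt}}((S^{1})^{4})^{G}$.

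I expect step (i) to be the main obstacle. Integration of a linear PDE over the non-compact cross-ratio domain naturally produces solutions with logarithmic or power-type blow-up near the boundary of moduli space, precisely the regime where two boundary points collide or move into a different cyclic configuration. Controlling these asymptotics will demand a judicious choice of characteristic curves and of integration constants, together with careful use of the alternation and $S_{5}$-symmetry of $c$ to force the leading divergent contributions to cancel. A plausible further step is to correct the formal primitive by an explicit bounded gauge whose coboundary vanishes identically, converting a potentially unbounded exact primitive into a genuinely bounded one. Once $L^{\infty}$-boundedness has been achieved, steps (ii) and (iii) should be essentially routine, since the boundary resolution is insensitive to behavior on null sets.
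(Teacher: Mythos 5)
Your blueprint shares the paper's general spirit (boundary resolution, reduction to a two-dimensional configuration space, a first-order PDE solved along characteristics, symmetry-forced cancellations), but as written it is a plan with two genuine gaps rather than a proof. The first is a regularity gap: $\tilde b$ and $\tilde c$ are only $L^\infty$-classes, defined almost everywhere, so you are not entitled to ``apply well-chosen differential operators'' to the functional equation $\sum_i(-1)^i\tilde b(f_i)=\tilde c$; nothing in your reduction makes these functions differentiable, or even pointwise defined. The paper's first substantive step exists precisely to legitimize differentiation: using equivariant measurable lifting and smoothness of the Radon--Nikodym derivative of the boundary action, it shows that every \emph{bounded} $G$-invariant primitive is of the form $I(c)+df$ with $f$ a genuinely smooth $K$-invariant function on triples (Proposition \ref{PropositionParametrization}), and that the data entering the PDEs ($c^\sharp$, $c^\flat$, $\check c$) are smooth because they are obtained by integrating $c$ against smooth kernels; all differential equations are then imposed on $f$, never on the measurable cocycle itself. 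Relatedly, your remark that the symmetries ``should yield enough further relations to make the system well-posed'' glosses over a real compatibility problem: the naive first-order system is overdetermined, and in the paper it is solvable only after correcting the right-hand sides by coboundaries $dv^\sharp,dv^\flat$ whose existence requires solving the Frobenius integrability system via an auxiliary ODE (Propositions \ref{PropositionIntegrabilityCondition} and \ref{PropositionPDEForr}, Appendix \ref{SectionTheFrobeniusIntegrabilityCondition}). Your sketch has no counterpart to either step.

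The second and decisive gap is boundedness, which you yourself call the main obstacle and then defer to a hoped-for ``judicious choice of characteristic curves and integration constants'' and an unspecified ``bounded gauge''. This is exactly where the content of the theorem lies: solving the invariance equations only produces elements of $\Pca(c)^{G}$, and unbounded $G$-invariant primitives do exist, so an existence argument for the PDE proves nothing by itself. The paper forces boundedness by a concrete mechanism absent from your proposal: the special solution of the Frobenius system has bounded inhomogeneity (Lemma \ref{LemmaBoundednessOfrAndw}); after antisymmetrizing, the solution $f_0$ is alternating under $\mathfrak{S}_{3}$ (Proposition \ref{PropositionAlternatingSolution}); for alternating $c$ the inhomogeneity $F^{\sharp}_{c}$ vanishes on the antidiagonal, so $f_0$ is locally constant there (Proposition \ref{PropositionSymmetries}, Lemma \ref{LemmaConstantAlongAntidiagonal}); and by $3$-transitivity boundedness of $P_c(f)$ reduces to boundedness of $f_0$ along two explicit line segments, which the $\mathfrak{S}_{3}$-action moves into a fundamental domain lying in a compact part of the square, where the characteristic coordinate $T$ is bounded (Lemmas \ref{LemmaBoundednessAlongLine} and \ref{LemmaBoundednessNearDiagonal}). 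Until you supply an argument of comparable precision controlling the blow-up of your integrated solution near the degenerate configurations, the proposal does not establish $H^4_{cb}(G;\R)=0$.
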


Actually, for such $G$ Monod conjectured that ${\rm bcd}(G) = 2$, which means that $H^{n}_{cb}(G; \R) = 0$ for all $n > 2$. In degree $n=3$ there are no nonzero cocycles in the boundary resolution at all \cite{Burger/On-and-around-the-bounded-cohomology-of-SL2}, but this is no longer true for $n>3$. In this sense, Theorem \ref{TheoremMain} is the prototype of a vanishing theorem that requires the construction of primitives. We believe that our method of proof generalizes to arbitrary $n>3$, and possibly to other Lie groups. This will be addressed in future work.

\smallskip

Monod's conjecture about the bounded cohomology of ${\rm SL}_2(\R)$ is a special case of a more general conjecture, which would allow to compute the continuous bounded cohomology of arbitrary connected Lie groups. In fact, since continuous bounded cohomology is invariant under division by the amenable radical \cite{Burger/Bounded-cohomology-of-lattices-in-higher-rank-Lie-groups,Monod/Continuous-bounded-cohomology-of-locally-compact-groups}, it is sufficient to compute the continuous bounded cohomology of semsimple Lie groups $H$ without compact factors and with finite center. For such groups it is conjectured \cite{Dupont/Bounds-for-characteristic-numbers-of-flat-bundles, Monod/An-invitation-to-bounded-cohomology} that the natural comparison map between the continuous bounded cohomology and the continuous cohomology is an isomorphism. This would imply that ${\rm bcd}(H)$ coincides with the dimension of the associated symmetric space, thereby providing examples of groups of arbitrary bounded cohomological dimension. Plenty is known by now about surjectivity of the comparison map \cite{Dupont/Bounds-for-characteristic-numbers-of-flat-bundles,Gromov/Volume-and-bounded-cohomology,Bucher-Karlsson/Finiteness-properties-of-characteristic-classes-of-flat-bundles,Lafont/Simplicial-volume-of-closed-locally-symmetric-spaces-of-non-compact-type,Goncharov/Geometry-of-configurations-polylogarithms-and-motivic-cohomology,Hartnick/Surjectivity-of-the-comparison-map-in-bounded-cohomology-for-Hermitian-Lie-groups}, while injectivity still remains mysterious in higher degrees. Indeed, injectivity has so far been established only in degree two for arbitrary~$H$ \cite{Burger/Bounded-cohomology-of-lattices-in-higher-rank-Lie-groups}, and for some rank one groups in degree three \cite{Burger/On-and-around-the-bounded-cohomology-of-SL2,Bloch/Higher-regulators-algebraic-K-theory-and-zeta-functions-of-elliptic-curves,Bucher/128}. Theorem \ref{TheoremMain} is the first result in degree greater than three. Incidentally, it has an application to the existence of solutions to perturbations of the Spence-Abel functional equation for Rogers' dilogarithm, along the lines suggested in \cite{Burger/On-and-around-the-bounded-cohomology-of-SL2}. This will be discussed in the forthcoming article \cite{Hartnick/Perturbations-of-the-Spence-Abel-equation-and-deformations-of-the-dilogarithm-function}.

\smallskip

For the proof of Theorem \ref{TheoremMain} we shall reformulate the problem of constructing bounded primitives in terms of a fixed point problem for the action of $G$ on a certain function space. The main idea is then to describe the fixed points as solutions of a certain system of linear first order partial differential equations. In this way, we obtain primitives by solving the corresponding Cauchy problem. We show that for carefully chosen initial conditions, particular solutions have additional discrete symmetries, which we finally use to deduce boundedness. We will give a more detailed outline of our strategy of proof in Section \ref{SubSectionStrategyOfProof} after introducing some notation.

\medskip

\textbf{Acknowledgement.} We are most indebted to B.\,Tugemann for generously conducting numerous computer experiments at an early stage of this project, and for many helpful discussions. We further thank U.\,Bader, M.\,Bj\"orklund, M.\,Bucher, M.\,Burger, A.\,Derighetti, J.\,L.\,Du-pont, H.\,Hedenmalm, A.\,Iozzi, A.\,Karlsson, H.\,Kn\"orrer, A.\,Laptev, N.\,Monod, A.\,Nevo, E.\,Sayag, I.\,Smith, J.\,Swoboda, and A.\,Wienhard for discussions and suggestions. We are very grateful to the Institut Mittag-Leffler and the organizers of the program ``Geometric and Analytic Aspects of Group Theory'', the Max Planck Institute for Mathematics and the organizers of the program ``Analysis on Lie Groups'', and \'Ecole Polytechnique F\'ed\'erale de Lausanne for their hospitality and for providing us with excellent working conditions.

T.\,H.\,received funding from the European Research Council under the European Union's Seventh Framework Programme (FP7/2007-2013), ERC Grant agreement no.\,306706.

A.\,O.\,wishes to thank the Institut des Hautes \'Etudes Scientifiques, the Isaac Newton Institute for Mathematical Sciences, the Max Planck Institute for Mathematics, the Centre de Recerca Matem\`atica Barcelona, and the Hausdorff Research Institute for Mathematics for their hospitality and excellent working conditions. He was supported by a grant from the Klaus Tschira Foundation, by grant EP/F005431/1 from the Engineering and Physical Sciences Research Council, and partially supported by grant ERC-2007-StG-205349 from the European Research Council.

\section{Primitives in continuous bounded cohomology}
\label{SectionPrimitivesInContinuousBoundedCohomology}

\subsection{The boundary model of continuous bounded cohomology}
\label{SubSubSectionTheBoundaryModelOfContinuousBoundedCohomology}

We collect some basic facts about the continuous bounded cohomology of Lie groups. By \cite[Cor.\,7.5.10]{Monod/Continuous-bounded-cohomology-of-locally-compact-groups} continuous bounded cohomology is invariant under local isomorphisms, whence it suffices to prove Theorem~\ref{TheoremMain} for the group $G \deq {\rm PU}(1,1)$. Recall that $G$ acts by fractional linear transformations on the closed unit disc $\overline{\mathbb D} \subset \C$. We define $K \deq {\rm Stab}_{G}(0)$ and $P \deq {\rm Stab}_{G}(1)$. Then $K \subset G$ is a maximal compact subgroup and $P \subset G$ is a parabolic.

The boundary model of the continuous bounded cohomology of $G$ relies on the action of $G$ on the boundary $S^1 = \partial\,\mathbb D$ of the unit disc by fractional linear transformations, which we shall refer to as the \emph{boundary action}. Note that this action is strictly $3$-transitive, since it is conjugate via the Cayley transform to the ${\rm PSL}_{2}(\R)$-action on the real projective line.

Throughout we denote by $\mu_K$ the unique $K$-invariant probability measure on $S^1$. We shall write $\mathcal{M}((S^{1})^{n})$ for the space of  $\mu_K$-measurable real-valued functions on $(S^{1})^{n}$ and $\mathcal{L}^{\infty}((S^{1})^{n})$ for the subspace of bounded functions. The quotients of these spaces obtained by identifying~$\mu_K$-almost everywhere coinciding functions will be denoted by $M((S^{1})^{n})$ and $L^{\infty}((S^{1})^{n})$, respectively. The diagonal $G$-action on $(S^1)^n$ induces actions on all of these spaces. For any subgroup $H \subset G$ we denote by $\mathcal M((S^{1})^{n})^{H}$, $\mathcal L^\infty((S^{1})^{n})^{H}$, $M((S^{1})^{n})^{H}$ and $L^\infty((S^1)^{n})^{H}$ the corresponding subspaces of $H$-invariants. Observe that the homogeneous differential 
\[
\map{d^{n}}{M((S^{1})^{n})}{M((S^{1})^{n+1})}, \quad df(z_{0}, \ldots, z_{n}) = \sum_{j=0}^{n} (-1)^{j} \, f(z_{0}, \ldots, \widehat{z_{j}}, \ldots, z_{n})
\]
maps $L^{\infty}((S^{1})^{n})^{H}$ to $L^\infty((S^{1})^{n+1})^{H}$. Tailoring the general boundary model due to Burger and Monod \cite{Burger/Bounded-cohomology-of-lattices-in-higher-rank-Lie-groups,Monod/Continuous-bounded-cohomology-of-locally-compact-groups} to our needs, the boundary model of the continuous bounded cohomology of $G$ takes on the following form.

\begin{proposition}[Boundary model, {\cite[Thm.\,7.5.3]{Monod/Continuous-bounded-cohomology-of-locally-compact-groups}}]
	The continuous bounded cohomology of~$G$ is given by the cohomology of the complex $(L^{\infty}((S^{1})^{\bullet+1})^{G}, d)$:
\[
H^{n}_{cb}(G; \R) \cong H^{n}\bigl( L^{\infty}( (S^{1})^{\bullet+1} )^{G}, d \bigr)
\]
for all $n \geq 0$.
\end{proposition}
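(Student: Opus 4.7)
The plan is to derive this directly from the Burger--Monod boundary theorem (Monod, Theorem 7.5.3), which asserts that for any locally compact second countable group $G$ and any \emph{doubly ergodic amenable regular $G$-space} $(B,\nu)$, the continuous bounded cohomology $H^{n}_{cb}(G;\R)$ is canonically isomorphic to $H^{n}(L^{\infty}(B^{\bullet+1})^{G},d)$. The proof therefore reduces to verifying that $(S^{1},\mu_K)$ equipped with the boundary action of $G={\rm PU}(1,1)$ is such a $G$-space. This is a routine verification from the structure of $G$, but it is the only non-trivial input.

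First I would identify $S^{1}$ with the homogeneous space $G/P$ via the orbit map at $1 \in S^{1}$, using that $P = {\rm Stab}_{G}(1)$ acts transitively on $S^{1}\setminus\{1\}$ is not needed, only that $G$ acts transitively on $S^{1}$, which is visible from the strict $3$-transitivity stated in the excerpt. Under this identification the $K$-invariant probability measure $\mu_K$ corresponds to the unique $K$-invariant probability measure on $G/P$, and hence lies in the canonical $G$-quasi-invariant measure class. This makes $(S^{1},\mu_K)$ a regular (standard Lebesgue) $G$-space in the sense required by Monod's theorem.

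Next I would check the two crucial properties. For \emph{amenability} of the $G$-action, the point is that $P$, being conjugate via the Cayley transform to the upper-triangular (Borel) subgroup of ${\rm PSL}_{2}(\R)$, is amenable; by a theorem of Zimmer the coset space $G/P$ is then an amenable $G$-space, and the same holds for the action on $(S^{1},\mu_K)$. For \emph{double ergodicity}, the diagonal $G$-action on $S^{1}\times S^{1}$ has by strict $3$-transitivity a single orbit of full measure, namely the complement of the diagonal, so any $G$-invariant measurable function on $S^{1}\times S^{1}$ is $\mu_K\otimes\mu_K$-almost everywhere constant.

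With these verifications in place, the hypotheses of the Burger--Monod boundary theorem are satisfied and the isomorphism in the statement follows. The main obstacle, if any, is conceptual rather than technical: one must be comfortable with the functional-analytic machinery underlying the boundary theorem — in particular that the homogeneous complex $(L^{\infty}((S^{1})^{\bullet+1})^{G},d)$ is a strong resolution of the trivial $G$-module $\R$ by relatively injective Banach $G$-modules — but all of this is black-boxed by the cited theorem.
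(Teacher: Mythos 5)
Your proposal is correct and matches the paper, which likewise obtains the statement directly from Monod's Theorem 7.5.3 (the paper offers no further proof beyond this citation, the substance being exactly the verification you carry out: $S^{1}\cong G/P$ with $P$ amenable is an amenable regular $G$-space in the canonical quasi-invariant measure class). Note only that the cited theorem needs just amenability of the $G$-space, so your discussion of double ergodicity (which in the Burger--Monod sense means ergodicity with coefficients, stronger than the plain ergodicity on $S^{1}\times S^{1}$ you check) is not required for this isomorphism.
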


\subsection{Primitives}
\label{SubSubSectionPrimitives}

For any cocycle $c \in L^{\infty}( (S^{1})^5 )^{G}$, i.e.\,$dc=0$, and any closed subgroup $H \subset G$, we denote by 
\[
\Pca(c)^{H} \deq \left\{ p \in M\bigl( (S^{1})^{4} \bigr)^{H} \,\Big|\, dp = c \right\}
\]
the space of $H$-invariant primitives of $c$ and by
\[
\Pca^{\infty}(c)^{H} \deq \left\{ p \in L^{\infty}\bigl( (S^{1})^{4} \bigr)^{H} \,\big|\, dp = c \right\}
\]
the subspace of bounded $H$-invariant primitives. Proving Theorem \ref{TheoremMain} then amounts to showing that $\Pca^{\infty}(c)^{G} \neq \emptyset$ for any such cocycle $c$. We will achieve this by explicitly constructing bounded $G$-invariant primitives. For this purpose, we first define an operator
\[
\map{I}{\Lca^{\infty}\bigl( (S^{1})^{n+1} \bigr)}{\Lca^{\infty}\bigl( (S^{1})^{n} \bigr)}
\]
by
\begin{equation} \label{EquationOperatorI}
	I(c)(z_{1},\dots,z_{n}) \deq \int_{S^{1}} c(z,z_{1}\dots,z_{n}) \, d\mu_{K}(z).
\end{equation}
It induces an operator $\map{I}{L^{\infty}((S^{1})^{n+1})}{L^{\infty}((S^{1})^{n})}$, which by abuse of notation we denote by the same symbol. By integrating the cocycle equation $dc=0$, we see that $d(I(c)) = c$ for every cocycle $c$.

Let us now fix a cocycle $c \in L^{\infty}((S^{1})^{5})^{G}$. By $K$-invariance of the measure $\mu_{K}$ we see from formula \eqref{EquationOperatorI} that the function $I(c)$ is $K$-invariant. It will, however, in general not be~$G$-invariant. In order to obtain~$G$-invariant primitives we amend the operator $I$ in the following way.

We denote by $(S^{1})^{(n)} \subset (S^{1})^{n}$ the subset of $n$-tuples of pairwise distinct points in $S^{1}$. Note in particular that the $G$-action on $(S^{1})^{(3)}$ is free and has two open orbits given by positively and negatively oriented triples. We write $C^{\infty}((S^{1})^{(3)})^{K}$ for the space of $K$-invariant real-valued smooth functions on~$(S^{1})^{(3)}$ and consider it as a subspace of $M((S^{1})^{3})$. We then define an operator
\[
\map{P_{c}}{C^{\infty}\bigl( (S^{1})^{(3)} \bigr)^{K}}{M\bigl( (S^{1})^{4} \bigr)}, \quad f \mapsto I(c) + df.
\]
A key observation is that all $G$-invariant bounded primitives of $c$ necessarily lie in the image of the operator $P_{c}$. This will allow us to express primitives in terms of smooth (rather than measurable) solutions to differential equations.

\pagebreak

\begin{proposition} \label{PropositionParametrization}
	The image of the operator $P_{c}$ satisfies
\[
\Pca^{\infty}(c)^{G} \subset P_{c}\bigl( C^{\infty}( (S^{1})^{(3)})^{K} \bigr) \subset \Pca(c)^{K}.
\]
\end{proposition}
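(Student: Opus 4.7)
The inclusion $P_c\bigl(C^\infty((S^1)^{(3)})^K\bigr) \subset \Pca(c)^K$ is a direct verification: for $f \in C^\infty((S^1)^{(3)})^K$ one has $d(P_c(f)) = d(I(c)) + d(df) = c$ because $I(c)$ is a primitive of $c$ and $d^2 = 0$, while $P_c(f) = I(c) + df$ is $K$-invariant since $I(c)$ is $K$-invariant by $K$-invariance of $\mu_K$ and $df$ inherits $K$-invariance from $f$.

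For the first inclusion, let $p \in \Pca^\infty(c)^G$ and set $q \deq p - I(c)$. Then $q \in L^\infty((S^1)^4)^K$ and $dq = dp - d(I(c)) = c - c = 0$, so $q$ is a $K$-invariant bounded cocycle. Define $f \deq I(q) \in L^\infty((S^1)^3)^K$. Integrating the cocycle equation $dq = 0$ in the first variable --- the very argument that establishes $d\circ I = \mathrm{id}$ on cocycles --- yields $df = q$, whence $p = I(c) + df = P_c(f)$. It remains only to exhibit a smooth representative of $f$ on $(S^1)^{(3)}$.

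The main step is this smoothness claim. I would exploit the $G$-invariance of $p$ (and similarly of $c$) together with the strict $3$-transitivity of $G$, which makes $(S^1)^{(3)}$ the disjoint union of two free open $G$-orbits. Fix a positively oriented reference triple $(w_1^0, w_2^0, w_3^0)$, and let $w = (w_1, w_2, w_3) \mapsto g_w$ be the smooth section of $G$ on the positive orbit with $g_w \cdot (w_1^0, w_2^0, w_3^0) = w$. By $G$-invariance of $p$ and the substitution $u = g_w^{-1} z$,
\[
I(p)(w) = \int_{S^1} \phi_+(u) \, J_w(u) \, d\mu_K(u),
\]
where $\phi_+(u) \deq p(u, w_1^0, w_2^0, w_3^0)$ is a well-defined bounded measurable function of $u$ --- one avoids the a priori measure-zero slice problem by invoking the cross-ratio factorization $p = \phi\circ\kappa$ valid a.e.\ on $(S^1)^{(4)}$ to set $\phi_+(u) \deq \phi(\kappa(u, w_1^0, w_2^0, w_3^0))$ --- and $J_w$ is the Radon--Nikodym density of $g_w^{-1}\mu_K$ with respect to $\mu_K$, jointly smooth in $(w, u)$. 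Differentiation under the integral in $w$ is then justified by boundedness of $\phi_+$ and the fact that every $\partial^\alpha_w J_w(u)$ is continuous in $(w, u)$, hence uniformly bounded in $u$ on compact neighborhoods of any given $w$. This gives $I(p) \in C^\infty$ on the positive orbit. The same reasoning on the negative orbit, combined with a completely analogous double-substitution for $I(I(c))$ using $G$-invariance of $c$ (integrating twice and absorbing the two copies of $g_w^{-1}$ into a product $J_w(u')J_w(u)$), shows that $f = I(p) - I(I(c))$ is smooth on all of $(S^1)^{(3)}$.

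The main obstacle is precisely this smoothness step: $p$ and $I(c)$ are only $L^\infty$, so $I$ applied to them does not a priori yield a smooth function. The essential trick is to use the $G$-invariance of the integrand to transfer all dependence on $w$ out of the non-smooth $L^\infty$-factor $\phi_+$ and into the smooth Jacobian $J_w$, so that the integrand becomes (a bounded measurable function) $\times$ (a jointly smooth factor), and differentiation under the integral is legitimate.
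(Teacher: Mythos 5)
Your proposal is correct, and its analytic core coincides with the paper's: you write $p = I(c)+df$ with $f = I(p - I(c))$, and you establish smoothness of $f$ on $(S^{1})^{(3)}$ by pushing all dependence on the triple into the Radon--Nikodym density of a translated measure and differentiating under the integral sign, using that the two $G$-orbits in $(S^{1})^{(3)}$ are open. The one genuine divergence is the ingredient that legitimizes the pointwise change of variables. The paper invokes Monod's equivariant measurable lifting theorem (Lemma \ref{LemmaInvariantRepresentatives}) to replace the classes $p$ and $c$ by strictly $G$-invariant representatives $\tilde{p}$, $\tilde{c}$, after which the substitution identity holds for every point and every group element, not merely almost everywhere. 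You instead manufacture a strict representative by factoring the invariant class a.e.\ through the $G$-invariants of the configuration --- note that the cross-ratio alone does not separate the ${\rm PU}(1,1)$-orbits on $(S^{1})^{(4)}$, so the factorization is really through cross-ratio together with orientation type --- which is legitimate here because the action on the conull set of distinct tuples is free and proper; the analogous factorization must also be carried out for $c$ on $(S^{1})^{(5)}$ before your double substitution for $I(I(c))$. This buys you independence from the lifting theorem at the cost of an extra factorization lemma, which you assert rather than prove, but which is true and elementary in this setting; the paper's route is shorter because the cited theorem does that work in complete generality, and its parametrization of orbit points by $g \mapsto g.(z_{1},z_{2},z_{3})$ is exactly your smooth section $w \mapsto g_{w}$ read in the opposite direction.
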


We will give a proof using the existence of invariant representatives for bounded $G$-invariant function classes. Here by an invariant representative of a function class $q \in L^\infty((S^{1})^{n})^{G}$ we mean a preimage $\tilde{q} \in \mathcal{L}^{\infty}((S^{1})^{n})^{G}$. We have the following existence result from \cite[Thm.\,A]{Monod/Equivariant-measurable-lifting}.

\begin{lemma}[Invariant representatives] \label{LemmaInvariantRepresentatives}
	The map $\mathcal{L}^{\infty}((S^{1})^{n})^{G} \to L^{\infty}((S^{1})^{n})^{G}$ is surjective.
\end{lemma}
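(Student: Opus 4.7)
The plan is to deduce the lemma from the general equivariant measurable lifting theorem of \cite{Monod/Equivariant-measurable-lifting} that is cited in the statement. The task then reduces to (i) checking that the hypotheses of that theorem are met in the present setting, and (ii) indicating the key ingredients that make the theorem work. For (i): the group $G = {\rm PU}(1,1)$ is locally compact and second countable, $(S^{1})^{n}$ is a standard Borel space on which $G$ acts continuously, and the product measure $\mu_{K}^{\otimes n}$ is $G$-quasi-invariant because the boundary action of $G$ on $S^{1}$ is smooth and preserves the Lebesgue measure class. These are exactly the standing assumptions of the general theorem.

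For (ii), the starting point is the classical Mokobodzki / Ionescu-Tulcea lifting theorem, which supplies a linear, multiplicative, order-preserving section $\rho \colon L^{\infty}((S^{1})^{n}) \to \mathcal{L}^{\infty}((S^{1})^{n})$ of the quotient map. Applied to a given class $q \in L^{\infty}((S^{1})^{n})^{G}$, this produces a bounded measurable representative $\rho(q)$ which, however, is $G$-invariant only up to a null set whose shape depends on the group element. Since $G$ is second countable, I would first fix a countable dense subgroup $G_{0} \subset G$ and discard the countable union of the corresponding exceptional null sets; this already upgrades $\rho(q)$ to a function pointwise invariant under $G_{0}$ on a conull Borel set.

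The main obstacle, and the true content of Monod's theorem, is to promote this countable invariance to honest $G$-invariance. Because $G$ is non-amenable one cannot manufacture equivariance by averaging, and because the exceptional set in the a.e. identity $\rho(q) \circ g = \rho(q)$ depends on $g$, an arbitrary lifting will not suffice. The route I would take is to choose $\rho$ to be a \emph{strong} lifting, meaning one compatible with the topology of $(S^{1})^{n}$ in the sense that $\rho(f)$ agrees with $f$ pointwise at every Lebesgue point of a continuous $f$. Combined with the continuity of the $G$-action and the density of $G_{0}$ in $G$, this compatibility allows one to pass from the established $G_{0}$-invariance to full $G$-invariance on a $G$-stable conull set; redefining the lift to be zero on the complementary null set then yields an element of $\mathcal{L}^{\infty}((S^{1})^{n})^{G}$ projecting to $q$, which is exactly what the lemma claims. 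The delicate point, namely the existence and construction of such a strong lifting adapted to a continuous action of a non-amenable group, is precisely what \cite[Thm.\,A]{Monod/Equivariant-measurable-lifting} supplies and is the step I expect to be the main obstacle if one were to give a self-contained proof.
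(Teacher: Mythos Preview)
Your proposal is correct and follows the same route as the paper: the paper gives no argument at all for this lemma, simply attributing it to \cite[Thm.\,A]{Monod/Equivariant-measurable-lifting}, and you do exactly that while additionally verifying that the hypotheses of Monod's theorem are satisfied and sketching its mechanism. Your discussion of the proof ingredients (classical lifting, passage through a countable dense subgroup, upgrading via a strong lifting) goes beyond what the paper records but is consistent with the cited reference.
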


We now use this lemma to prove the proposition.

\begin{proof}[Proof of Proposition \ref{PropositionParametrization}]
	We have already seen above that $I(c) \in \Pca^{\infty}(c)^{K}$. We conclude that $P_{c}(f) \in \Pca(c)^{K}$ for all $f \in C^{\infty}( (S^{1})^{(3)})^{K}$. It also follows that if $p \in \mathcal P^\infty(c)^G$ is any bounded primitive then $d (p - I(c)) = 0$. Hence we must have $p = I(c) + df$ for some \emph{measurable} function~$f$ given by $f = I(p - I(c))$. The nontrivial part of the proof is to show that this function $f$ can be chosen to be smooth. In fact, by the lemma we may take invariant representatives $\tilde{p}$ and $\tilde{c}$ and set
\[
f \deq I(\tilde{p} - I(\tilde{c})) \in \mathcal{M}((S^{1})^{3})^{K}.
\]
We claim that this function is smooth on $(S^1)^{(3)}$. Indeed, for every $g \in G$ we have 
\begin{multline*}
	f(g.z_{1}, g.z_{2}, g.z_{3}) = \int_{S^{1}} \Big( \tilde{p}(z, g.z_{1}, g.z_{2}, g.z_{3}) - I(\tilde{c})(z, g.z_{1}, g.z_{2}, g.z_{3}) \Big) d\mu_{K}(z) \\ = \int_{S^{1}} \int_{S^{1}} \frac{d(g.\mu_{K})}{d\mu_{K}}(z) \left( \tilde{p}(z, z_{1}, z_{2}, z_{3}) - \frac{d(g.\mu_{K})}{d\mu_{K}}(w) \, \tilde{c}(z, w, z_{1}, z_{2}, z_{3}) \right) d\mu_{K}(w) \, d\mu_{K}(z).
\end{multline*}
Smoothness of the Radon-Nikodym derivative \cite[Prop.\,8.43]{Knapp/Lie-groups-beyond-an-introduction} hence implies that the function on $G$ given by $g \mapsto f(g.z_{1}, g.z_{2}, g.z_{3})$ is smooth. Since $G$-orbits are open in $(S^1)^{(3)}$ we infer that the function $f$ is smooth.
\end{proof}

\subsection{Strategy of proof}
\label{SubSectionStrategyOfProof}

We briefly outline the strategy for the proof of Theorem \ref{TheoremMain}. We shall proceed in three steps.
\begin{enumerate}[itemsep=1ex, leftmargin=1cm, itemindent=0cm, labelwidth=0cm, labelsep=0.2cm]
	\item In Section \ref{SectionPartialDifferentialEquations}, we show that for any function $f \in C^{\infty}( (S^{1})^{(3)})^{K}$ the primitive $P_{c}(f)$ is $G$-invariant if and only if $f$ satisfies a certain system of linear first order partial differential equations.
	\item In Section \ref{SectionConstructionOfPrimitives}, we explicitly construct solutions $f$ of this system of differential equations, showing that $\Pca(c)^{G} \neq \emptyset$.
	\item In Section \ref{SectionBoundednessOfPrimitives}, we prove that there exist particular solutions $f$ with certain additional discrete symmetries. For such functions $f$ we then show boundedness of $P_{c}(f)$, establishing that $\Pca^{\infty}(c)^{G} \neq \emptyset$.
\end{enumerate}

\section{Partial differential equations}
\label{SectionPartialDifferentialEquations}

\subsection{The boundary action in local coordinates}
\label{SubSectionTheBoundaryActionInLocalCoordinates}

Recall that elements of ${\rm SU}(1,1)$ are matrices of the form 
\[
g_{a,b} \deq \left( \begin{matrix} a & b \\ \bar b &\bar a \end{matrix}
\right)
\]
for numbers $a,b \in \C$ satisfying $|a|^2-|b|^2 = 1$. We denote by $[g_{a,b}]$ the corresponding elements in $G = {\rm PU}(1,1)$. If we define
\[
k_{\xi} \deq \bigl[ g_{e^{i\xi/2},\,0} \bigr], \quad a_{s} \deq \left[ g_{\cosh(-s/2),\,\sinh(-s/2)} \right], \quad n_{t} \deq \left[ g_{1+\frac{i}{2} t,\,- \frac{i}{2} t} \right],
\]
then the maps $\xi \mapsto k_{\xi}$, $s \mapsto a_{s}$ and $t \mapsto n_{t}$ are one-parameter groups $\R \to G$ whose images we denote by $K$, $A$ and $N$, respectively. Note that this is compatible with our definition of the subgroup $K$ in Section \ref{SubSubSectionTheBoundaryModelOfContinuousBoundedCohomology}, and that $A$ and $N$ are the Levi factor and unipotent radical of the parabolic $P$. In particular, $\mathrm{Fix}(A)=\{\pm1\}$ and $\mathrm{Fix}(N)=\{1\}$. Moreover, we obtain an Iwasawa decomposition $G = KAN$, and every elliptic (hyperbolic, parabolic) element in $G$ is conjugate to an element in $K$ ($A$, $N$).

The coordinate on the boundary $S^{1} \subset \C$ will be denoted by a complex number $z$ of modulus~$1$. In addition, it will often be convenient to work with the angular coordinate $\th \in [0, 2\pi)$ defined by $z = e^{i \th}$. Correspondingly, on $(S^1)^{n}$ we will use the two sets of coordinates $(z_{0},\ldots,z_{n-1})$ and $(\th_{0},\ldots,\th_{n-1})$. Note that, in angular coordinates on $S^{1}$, the measure~$\mu_{K}$ is given by
\[
\int_{S^{1}}f(z) \, d\mu_{K}(z) = \fint f(\th) \, d\th \deq \frac{1}{2\pi} \int_{0}^{2\pi} f(\th) \, d\th.
\]

\begin{convention} \label{ConventionAngularCoordinates}
	Throughout, all operations on angular coordinates will implicitly be understood modulo~$2\pi$. For example, $\th_{2}-\th_{1}$ denotes the unique point in the interval $[0,2\pi)$ that is congruent to $\th_{2}-\th_{1}$ modulo~$2\pi$.
\end{convention}

The $G$-action on $S^{1}$ induces a $G$-action on the interval $[0, 2\pi)$ by the relation $g.e^{i\th} = e^{i \, g.\theta}$. Note that in particular $k_{\xi}.\th = \th + \xi$. The next lemma provides formulas for the infinitesimal action of the one-parameter subgroups $\{a_{s}\}$ and $\{n_{t}\}$ in angular coordinates.
\begin{lemma} \label{LemmaInfinitesimalANAction}
	The infinitesimal action of $\{a_{s}\}$ and $\{n_{t}\}$ in angular coordinates is given by
\[
\frac{d}{ds}(a_{s}.\eta) = \sin(a_{s}.\eta), \quad \frac{d}{dt}(n_{t}.\eta) = 1-\cos(n_{t}.\eta)
\]
for $\eta \in [0,2\pi)$.
\end{lemma}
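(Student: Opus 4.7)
The plan is to compute the infinitesimal generators of the flows $\{a_s\}$ and $\{n_t\}$ directly in the complex coordinate $z=e^{i\eta}$, where the $SU(1,1)$ action is given by an explicit M\"obius transformation, and then translate the resulting vector field back to the angular coordinate $\eta$. Once the generator is identified as a function of $\eta$, the formula at arbitrary parameter value follows from the one-parameter subgroup property.

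Concretely, $g_{a,b}$ acts on $z\in S^1$ by $z\mapsto (az+b)/(\bar b z+\bar a)$. Specializing, one gets $a_s.z=(\cosh(s/2)\,z-\sinh(s/2))/(-\sinh(s/2)\,z+\cosh(s/2))$ and $n_t.z=((1+\tfrac{it}{2})z-\tfrac{it}{2})/(\tfrac{it}{2}z+1-\tfrac{it}{2})$. Applying the quotient rule at $s=0$ and $t=0$ respectively, I expect to obtain $\frac{d}{ds}\big|_{s=0}(a_s.z)=\tfrac12(z^2-1)$ and $\frac{d}{dt}\big|_{t=0}(n_t.z)=-\tfrac{i}{2}(z-1)^2$. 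Converting via $\frac{d\eta}{d\tau}=\frac{1}{iz}\frac{dz}{d\tau}$ and using $\sin\eta=(z-z^{-1})/(2i)$ together with $\cos\eta=(z+z^{-1})/2$, the first expression collapses to $\sin\eta$ and the second to $1-\cos\eta$.

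To upgrade these infinitesimal identities at the identity to the formulas stated in the lemma for every $s$ (resp. $t$), I will invoke the one-parameter subgroup relation $a_{s+\varepsilon}=a_\varepsilon\, a_s$, which gives $a_{s+\varepsilon}.\eta=a_\varepsilon.(a_s.\eta)$; differentiating in $\varepsilon$ at $\varepsilon=0$ then yields $\frac{d}{ds}(a_s.\eta)=\sin(a_s.\eta)$, and exactly the same argument applied to $\{n_t\}$ produces the second formula. There is no genuine obstacle here; the only care needed is careful bookkeeping of signs in the quotient-rule computation and in the conversion from $z$ to $\eta$. A useful sanity check is that the resulting vector fields vanish precisely on the known fixed sets: $\sin\eta=0$ at $\eta=0,\pi$, matching $\mathrm{Fix}(A)=\{\pm1\}$, while $1-\cos\eta=0$ only at $\eta=0$, matching $\mathrm{Fix}(N)=\{1\}$.
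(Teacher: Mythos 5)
Your proposal is correct and follows essentially the same route as the paper: differentiate the explicit fractional linear action at the identity (the intermediate expressions $\tfrac12(z^2-1)$ and $-\tfrac{i}{2}(z-1)^2$ and their conversion to $\sin\eta$, $1-\cos\eta$ check out), then use the one-parameter group relation to propagate the formula to all $s$ and $t$. The only cosmetic difference is that you compute the generator in the coordinate $z$ and then convert, whereas the paper differentiates $e^{i\,a_s.\phi}$ directly and divides by $i e^{i\phi}$, which is the same computation.
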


\begin{proof}
	To prove the first formula, we compute
\[
\left.\frac{d}{ds}\right|_{s=0}(a_{s}.\phi) = \frac{1}{i \, e^{i\phi}} \left.\frac{d}{ds}\right|_{s=0}  e^{i \, a_{s}.\phi} =  \frac{1}{i \, e^{i\phi}} \left.\frac{d}{ds}\left(\frac{\cosh(-s/2) \, e^{i\phi} + \sinh(-s/2)}{\sinh(-s/2) \, e^{i\phi} + \cosh(-s/2)}\right) \right|_{s=0} = \sin(\phi).
\]
Since $\{a_{s}\}$ is a one-parameter group we further infer that
\[
\frac{d}{ds}(a_{s}.\eta) = \left.\frac{d}{d\sigma} \right|_{\sigma=0}(a_{s+\sigma}.\eta) = \left.\frac{d}{d\sigma} \right|_{\sigma=0}(a_{\sigma}(a_{s}.\eta)) = \sin(a_{s}.\eta).
\]
Likewise, for the second formula we compute
\[
\left.\frac{d}{dt} \right|_{t=0}(n_{t}.\phi) = \frac{1}{i \, e^{i\phi}} \left.\frac{d}{dt} \right|_{t=0} e^{i \, n_{t}.\phi} = \frac{1}{i \, e^{i\phi}} \left.\frac{d}{dt}\left(\frac{(1 + \frac{i}{2} t) \, e^{i\phi} - \frac{i}{2} t}{\frac{i}{2} t \, e^{i\phi} + 1 - \frac{i}{2} t} \right) \right|_{t=0} = 1-\cos(\phi)
\]
and conclude as above.
\end{proof}

We have chosen the somewhat non-standard parametrization of the groups $A$ and $N$ above in such a way that the infinitesimal action of $\{a_{s}\}$ and $\{n_{t}\}$ is given by the simple formulas of the lemma.

\subsection{Fundamental vector fields}
\label{SubSectionFundamentalVectorFields}

We denote by $L_{K}^{(n)}$, $L_{A}^{(n)}$ and $L_{N}^{(n)}$ the differential operators that appear as fundamental vector fields for the infinitesimal action of the one-parameter groups $\{k_{\xi}\}$, $\{a_{s}\}$ and $\{n_{t}\}$ on $(S^{1})^{(n)}$. By Lemma \ref{LemmaInfinitesimalANAction}, they are given in angular coordinates by
\[
\begin{split}
	L_{K}^{(n)} &= \sum_{j=0}^{n-1} \frac{\partial}{\partial \th_{j}} \\
	L_A^{(n)} &=  \sum_{j=0}^{n-1} \sin(\th_j) \, \frac{\partial}{\partial \th_{j}} \\
	L_N^{(n)} &= \sum_{j=0}^{n-1} (1-\cos(\th_{j})) \, \frac{\partial}{\partial \th_{j}}.
\end{split}
\]
The next lemma is crucial for applications of the operators $L_{K}^{(n)}$, $L_{A}^{(n)}$ and $L_{N}^{(n)}$ in cohomology.

\begin{lemma} \label{LemmaOperatorsCommuteWithDifferentials}
	Let $L^{(n)}$ denote one of the operators $L_{K}^{(n)}$, $L_{A}^{(n)}$ and $L_{N}^{(n)}$. Then $L^{(n)}$ commutes with the homogeneous differential in the sense that
\[
d^{n} \circ L^{(n)}  = L^{(n+1)} \circ d^{n}
\]
for every $n>0$.
\end{lemma}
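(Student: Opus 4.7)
The plan is to recognize each of the operators $L_K^{(n)}$, $L_A^{(n)}$, $L_N^{(n)}$ as the fundamental vector field on $(S^1)^{(n)}$ associated with the diagonal action of a one-parameter subgroup $\{g_t\}$ of $G$ (namely $\{k_\xi\}$, $\{a_s\}$, $\{n_t\}$, respectively), and then to deduce the claim from naturality of the homogeneous differential with respect to $G$-actions.

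More precisely, I would introduce the face maps
\[
\map{\partial^j}{(S^1)^{n+1}}{(S^1)^n}, \quad \partial^j(z_0,\ldots,z_n) \deq (z_0,\ldots,\widehat{z_j},\ldots,z_n),
\]
in terms of which $d^n f = \sum_{j=0}^{n}(-1)^j (\partial^j)^* f$. Each $\partial^j$ is equivariant for the diagonal $G$-action, so $L^{(n+1)}$ is $\partial^j$-related to $L^{(n)}$ for every $j$. This translates into the pullback relation
\[
L^{(n+1)}\bigl((\partial^j)^* f\bigr) = (\partial^j)^*\bigl(L^{(n)} f\bigr),
\]
and summing over $j$ with alternating signs gives the desired identity $L^{(n+1)}(d^n f) = d^n(L^{(n)} f)$.

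If one prefers a direct computation, it is equally short in angular coordinates. Each of the three operators has the form $L^{(m)} = \sum_{k=0}^{m-1} X(\theta_k)\,\partial_{\theta_k}$, where $X(\theta)$ is the scalar function $1$, $\sin\theta$, or $1-\cos\theta$, and is in particular independent of the factor index $k$. Since $f(z_0,\ldots,\widehat{z_j},\ldots,z_n)$ does not depend on the variable $\theta_j$, applying $L^{(n+1)}$ annihilates the $k=j$ summand, and the remaining $k\neq j$ terms reassemble precisely to $(L^{(n)}f)(z_0,\ldots,\widehat{z_j},\ldots,z_n)$. Taking the alternating sum over $j$ then yields the claim.

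The argument is essentially formal: the only point that deserves care is the bookkeeping in identifying the $n-1$ remaining partial derivatives with the operator $L^{(n)}$ applied to a function of $n$ variables, which is why the fact that $X(\theta)$ is the same on every factor is essential. No serious obstacle is expected.
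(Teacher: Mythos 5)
Your proposal is correct, and your second (direct computation) argument is essentially the paper's own proof: the paper works with the general operator $L_{\lam}^{(n)}=\sum_j \lam(\th_j)\,\partial_{\th_j}$ with a single coefficient function $\lam$ on every factor, notes that the $j$-th derivative kills the $j$-th face term, and reassembles the rest, exactly as you describe. Your first, more conceptual route via $G$-equivariance of the face maps and $\partial^j$-relatedness of the fundamental vector fields is a valid repackaging of the same bookkeeping.
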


\begin{proof}
	Let $\lam \in C^\infty([0, 2\pi))$ and consider the differential operators
\[
L_{\lam}^{(n)} \deq \sum_{j=0}^{n-1} \lam(\th_{j}) \, \frac{\partial}{\partial \th_{j}}
\]
for every $n>0$. For any smooth $(n-1)$-cochain $q$ we compute
\[
\begin{split}
	\left( L_{\lam}^{(n+1)} \bigl( d^{n} q \bigr) \right) (\th_{0}, \dots, \th_{n}) &=\sum_{j=0}^{n} \lam(\th_{j}) \, \frac{\partial}{\partial \th_{j}} \left( \sum_{\ell=0}^{n} (-1)^\ell \, q\bigl( \th_{0}, \dots, \widehat{\th_{\ell}}, \dots, \th_{n} \bigr) \right) \\
	&= \sum_{\ell=0}^{n} (-1)^\ell \, \sum_{j\neq \ell} \lam(\th_{j}) \, \frac{\partial q}{\partial \th_{j}} \bigl( \th_{0}, \dots, \widehat{\th_{\ell}}, \dots, \th_{n} \bigr) \\
	&= \sum_{\ell=0}^{n} (-1)^\ell \, \bigl( L_{\lam}^{(n)}q \bigr) \bigl( \th_{0}, \dots, \widehat{\th_{\ell}}, \dots, \th_{n} \bigr) \\
	&= \left( d^{n} \bigl( L_{\lam}^{(n)}q \bigr) \right) \! (\th_{0}, \dots, \th_{n}). \qedhere
\end{split}
\]
\end{proof}

\subsection{Infinitesimal invariance of primitives}

We are now in a position to characterize $G$-invariance of primitives $P_{c}(f)$ in terms of differential equations for the function $f$. First, let us define functions $c^{\sharp}$ and $c^{\flat}$ by
\begin{equation} \label{EquationDefinitionOfcsharp}
	c^{\sharp}(\th_{0}, \th_{1}, \th_{2}) \deq \fint\fint \cos(\varphi) \, c(\eta, \varphi, \theta_0, \theta_1, \theta_2) \, d\eta \, d\varphi
\end{equation}
and
\begin{equation} \label{EquationDefinitionOfcflat}
	c^{\flat}(\th_{0}, \th_{1}, \th_{2}) \deq \fint\fint \sin(\varphi) \, c(\eta, \varphi, \th_{0}, \th_{1}, \th_{2}) \, d\eta \, d\varphi.
\end{equation}
An argument as in the proof of Proposition \ref{PropositionParametrization} using smoothness of the Radon-Nikodym derivative shows that we may choose the functions $c^{\sharp}$ and $c^{\flat}$ to be smooth on $(S^{1})^{(3)}$. The next proposition achieves the first step in the agenda outlined in Section \ref{SubSectionStrategyOfProof}.

\begin{proposition} \label{PropositionPDEForf}
	Let $f \in C^{\infty}((S^{1})^{(3)})^{K}$. Then the primitive $P_{c}(f) \in \Pca(c)^{K}$ is $G$-invariant if and only if there exist functions  $v^{\sharp}, v^{\flat} \in C^{\infty}((S^{1})^{(2)})$ such that the triple $(f,v^{\sharp},v^{\flat})$ satisfies the system of partial differential equations
\begin{equation} \label{EquationSystemForf}
\left\{
\begin{aligned}
	L_{A}^{(3)}f &= c^{\sharp} + dv^{\sharp} \\
	L_{N}^{(3)}f &= c^{\flat} + dv^{\flat}.
\end{aligned}
\right.
\end{equation}
\end{proposition}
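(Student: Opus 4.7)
The plan is as follows. Since $P_c(f) = I(c) + df$ is $K$-invariant by construction, and since $G = KAN$, the $G$-invariance of $P_c(f)$ is equivalent to annihilation by the fundamental vector fields of $A$ and $N$, that is, to $L_A^{(4)} P_c(f) = 0$ and $L_N^{(4)} P_c(f) = 0$. I will establish the two identities
\[
L_A^{(4)} P_c(f) = d\bigl( L_A^{(3)} f - c^\sharp \bigr), \qquad L_N^{(4)} P_c(f) = d\bigl( L_N^{(3)} f - c^\flat \bigr),
\]
from which the backward direction of the proposition is immediate via $d^2 = 0$, while the forward direction reduces to the problem of exhibiting smooth primitives $v^\sharp, v^\flat \in C^\infty((S^1)^{(2)})$ of the respective closed functions on $(S^1)^{(3)}$.

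For the first identity, Lemma \ref{LemmaOperatorsCommuteWithDifferentials} gives $L_A^{(4)} df = d(L_A^{(3)} f)$, so it suffices to show $L_A^{(4)} I(c) = -dc^\sharp$. To compute $L_A^{(4)} I(c)$ I plan to use the infinitesimal $G$-invariance of $c$, namely $L_A^{(5)} c = 0$, which in angular coordinates reads $\sin(\eta)\,\partial_\eta c + \sum_{j=0}^{3} \sin(\theta_j)\, \partial_{\theta_j} c = 0$. Integrating over $\eta$ against $\mu_K$, pulling differentiation outside the integral for the $\theta_j$-terms, and performing a single integration by parts (legal by $2\pi$-periodicity) on the $\eta$-term yields $L_A^{(4)} I(c)(\theta_0, \dots, \theta_3) = \fint \cos(\eta)\, c(\eta, \theta_0, \dots, \theta_3)\, d\eta$. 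Independently, I will apply $d$ to the defining formula \eqref{EquationDefinitionOfcsharp} and invoke the cocycle equation $dc = 0$ on the six-tuple $(\eta, \varphi, \theta_0, \dots, \theta_3)$ to rewrite the inner alternating sum as $c(\eta, \theta_0, \dots, \theta_3) - c(\varphi, \theta_0, \dots, \theta_3)$; the factor $\fint \cos(\varphi)\, d\varphi = 0$ will kill one of the two resulting terms, and the other equals $-L_A^{(4)} I(c)$. This proves the first identity, and the second follows verbatim with $\sin$ in place of $\cos$ and the antiderivative relation $(1 - \cos)' = \sin$ used for the integration by parts.

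It remains to produce smooth primitives in the forward direction. Given the closed smooth function $\alpha \deq L_A^{(3)} f - c^\sharp$ on $(S^1)^{(3)}$, the natural candidate coming from the contracting homotopy is $v^\sharp \deq I(\alpha) = \fint \alpha(z, \theta_0, \theta_1)\, d\mu_K(z)$, which formally satisfies $dv^\sharp = \alpha$ by the standard identity $dI + Id = \mathrm{id}$ on measurable cochains. The main obstacle I anticipate is the well-definedness and smoothness of $v^\sharp$: while $c^\sharp$ is bounded, the term $L_A^{(3)} f$ involves first derivatives of $f$, which may blow up near the diagonals of $(S^1)^3$, so the integral is not obviously convergent on $(S^1)^{(2)}$. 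To circumvent this I would either invoke a Radon--Nikodym smoothing argument in the spirit of the proof of Proposition \ref{PropositionParametrization} to promote an a priori measurable primitive to a smooth one on $(S^1)^{(2)}$, or construct $v^\sharp$ locally via basepoint homotopies $v^\sharp(\theta_0, \theta_1) \deq \alpha(z, \theta_0, \theta_1)$ on open subsets of $(S^1)^{(2)}$ where a smooth section $z \in S^1 \setminus \{\theta_0, \theta_1\}$ can be chosen, then patch using the fact that two such local primitives differ by a coboundary on $(S^1)^{(1)} = S^1$. The analogous construction then yields $v^\flat$.
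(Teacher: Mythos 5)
Your proposal is correct and follows essentially the same route as the paper: reduce $G$-invariance to $L_A^{(4)}P_c(f)=L_N^{(4)}P_c(f)=0$ using $K$-invariance and $G=KAN$, establish $L_A^{(4)}I(c)=-dc^{\sharp}$ and $L_N^{(4)}I(c)=-dc^{\flat}$ from the cocycle identity (this is exactly Lemma \ref{LemmaOperatorsActOnPrimitive}), combine with Lemma \ref{LemmaOperatorsCommuteWithDifferentials}, and then produce smooth $v^{\sharp},v^{\flat}$ via the operator $I$ and a Radon--Nikodym smoothing argument. The only differences are points of rigor that the paper handles explicitly: rather than writing $L_A^{(5)}c=0$ with pointwise derivatives of the $L^{\infty}$-class $c$, it differentiates the $a_s$-flow of an invariant representative and moves the action onto the measure through the Radon--Nikodym derivative (your integration by parts in disguise), and for the forward direction it carries out precisely your first option, passing from a measurable primitive to a smooth one by replacing $v^{\sharp}$ with $I(dv^{\sharp})$.
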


Note that all functions appearing in \eqref{EquationSystemForf} are smooth, so all derivatives can be understood classically. The proof of Proposition \ref{PropositionPDEForf} relies on the following lemma.

\begin{lemma} \label{LemmaOperatorsActOnPrimitive}
	The function $I(c)$ is smooth along $G$-orbits and satisfies
\[
\begin{split}
	L_{A}^{(4)} I(c) &= - dc^{\sharp} \\
	L_{N}^{(4)} I(c) &= - dc^{\flat},
\end{split} 
\]
where $c^\sharp$ and $c^\flat$ are as in \eqref{EquationDefinitionOfcsharp} and \eqref{EquationDefinitionOfcflat}.
\end{lemma}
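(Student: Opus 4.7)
\emph{Strategy.} The plan is to lift $c$ to a $G$-invariant measurable representative $\tilde{c} \in \mathcal{L}^\infty((S^1)^5)^G$ via Lemma~\ref{LemmaInvariantRepresentatives}, and then to exploit $G$-invariance of $\tilde c$ together with a change of variables to express $I(\tilde c)$ evaluated at a translated argument in terms of a Radon--Nikodym weight, exactly as in the proof of Proposition~\ref{PropositionParametrization}. The identity I expect to derive is
\[
I(\tilde c)(g.\theta_0, \ldots, g.\theta_3) = \fint \tilde c(\eta, \theta_0, \ldots, \theta_3) \, \frac{d(g.\mu_K)}{d\mu_K}(\eta) \, d\mu_K(\eta).
\]
Smoothness of the Radon--Nikodym derivative \cite{Knapp/Lie-groups-beyond-an-introduction} then yields smoothness of $I(c)$ along $G$-orbits.

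\emph{Infinitesimal formula.} To compute $L_A^{(4)} I(c)$, I differentiate the above identity at $s = 0$ along the one-parameter subgroup $g = a_s$. By Lemma~\ref{LemmaInfinitesimalANAction} the infinitesimal generator of $s \mapsto a_s.\eta$ at $s=0$ is $\sin(\eta)$; hence the $s$-derivative at zero of the Radon--Nikodym factor $\frac{d(a_s.\mu_K)}{d\mu_K}(\eta) = \frac{\partial}{\partial \eta}(a_s.\eta)$ equals $\frac{\partial}{\partial \eta}\sin(\eta) = \cos(\eta)$. Pulling the derivative under the integral (justified by boundedness of $c$ and smoothness of the Radon--Nikodym factor in $s$) gives
\[
L_A^{(4)} I(c)(\theta_0, \ldots, \theta_3) = \fint \cos(\eta) \, c(\eta, \theta_0, \ldots, \theta_3) \, d\mu_K(\eta),
\]
and analogously $L_N^{(4)} I(c)$ is the same integral with $\cos$ replaced by $\sin$.

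\emph{Cocycle identity.} To match these integrals with $-dc^\sharp$ and $-dc^\flat$, I multiply the cocycle equation $dc(\eta, \varphi, \theta_0, \ldots, \theta_3) = 0$ by $\cos(\varphi)$ (respectively $\sin(\varphi)$) and integrate against $d\mu_K(\eta) \otimes d\mu_K(\varphi)$. Among the six terms of $dc$, the term with $\eta$ omitted depends only on $(\varphi, \theta)$ and produces $\fint \cos(\varphi) c(\varphi, \theta_0, \ldots, \theta_3) d\varphi$; the term with $\varphi$ omitted depends only on $(\eta, \theta)$ and drops out because $\cos$ and $\sin$ have vanishing mean on $S^1$. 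The remaining four terms reassemble precisely into $dc^\sharp(\theta_0, \ldots, \theta_3)$ (respectively $dc^\flat$). Rearranging yields
\[
\fint \cos(\eta) \, c(\eta, \theta_0, \ldots, \theta_3) \, d\mu_K(\eta) = -dc^\sharp(\theta_0, \ldots, \theta_3),
\]
and similarly with $\sin$ on the left and $-dc^\flat$ on the right; combined with the infinitesimal formula this closes the proof.

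\emph{Main obstacle.} The calculation is almost entirely bookkeeping; the real content is packaged in Lemma~\ref{LemmaInfinitesimalANAction} and in the elementary fact that $\fint \cos = \fint \sin = 0$. The one subtle point is a sign convention: one must match the paper's convention for $g.\mu_K$ (fixed implicitly in the proof of Proposition~\ref{PropositionParametrization}) so that $\frac{d(a_s.\mu_K)}{d\mu_K}(\eta)$ is $(a_s)'(\eta)$ rather than $(a_{-s})'(\eta)$, and so that the derivative produces $+\cos(\eta)$ with exactly the sign needed to yield $-dc^\sharp$ at the end.
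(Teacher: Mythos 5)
Your proposal is correct and follows essentially the same route as the paper: lift $c$ to a $G$-invariant representative via Lemma \ref{LemmaInvariantRepresentatives}, use invariance plus a change of variables to produce the Jacobian factor $\frac{d(a_s.\varphi)}{d\varphi}$ (resp. $\frac{d(n_t.\varphi)}{d\varphi}$), differentiate under the integral using Lemma \ref{LemmaInfinitesimalANAction} to get $\fint\cos(\varphi)\,c(\varphi,\th_0,\dots,\th_3)\,d\varphi$ (resp. the $\sin$ analogue), and then integrate the cocycle identity against $\cos(\varphi)$ (resp. $\sin(\varphi)$), with the mean-zero observation killing the extra term, to identify these with $-dc^{\sharp}$ and $-dc^{\flat}$. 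The convention point you flag about the Radon--Nikodym factor is resolved exactly as in the paper's computation.
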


\begin{proof}
	An argument as in the proof of Proposition \ref{PropositionParametrization} shows that the function $I(c)$ can be chosen to be smooth along $G$-orbits. Fix an invariant representative $\tilde{c}$ of $c$ by Lemma \ref{LemmaInvariantRepresentatives}. Using $A$-invariance of $\tilde{c}$ and Lemma \ref{LemmaInfinitesimalANAction} we compute
\[
\begin{split}
	L_{A}^{(4)}(I(c))(\theta_0, \dots, \theta_{3}) &= \left. \frac{d}{ds} \right|_{s=0} \fint \tilde{c}(\varphi, a_{s}.\th_{0}, \dots, a_{s}.\th_{3} ) \, d\varphi \\
	&=	\left. \frac{d}{ds} \right|_{s=0} \fint \frac{d(a_{s}.\varphi)}{d\varphi} \, \tilde{c}(\varphi, \th_{0}, \dots, \th_{3}) \, d\varphi \\
	&=	\fint \frac{d}{d\varphi} \left.\frac{d(a_{s}.\varphi)}{ds} \right|_{s=0} \tilde{c}(\varphi, \th_{0}, \dots, \th_{3}) \, d\varphi \\
	&= \left. \fint \frac{d}{d\varphi} \sin(a_{s}.\varphi) \right|_{s=0} \tilde{c}(\varphi, \th_{0}, \dots, \th_{3}) \, d\varphi \\
	&= \fint \cos(\varphi) \, c(\varphi, \th_{0}, \dots, \th_{3}) \, d\varphi.
\end{split}
\]
On the other hand, the cocycle identity
\[
\begin{split}
	0 &= dc(\eta, \varphi, \th_{0}, \dots, \th_{3}) \\
	&= c(\varphi,\th_{0}, \dots, \th_{3}) - c(\eta,\th_{0}, \dots, \th_{3}) + \sum_{j=0}^{3} (-1)^j \, c(\eta, \varphi, \th_{0}, \dots, \widehat{\th_{j}}, \dots, \th_{3})
\end{split}
\]
integrates against $\cos(\varphi)$ to

\pagebreak

\[
\begin{split}
	0 &= \fint \fint \cos(\varphi) \, c(\varphi,\th_{0}, \dots, \th_{3}) \, d\eta \, d\varphi - 0 + \sum_{j=0}^{3} (-1)^j \, c^{\sharp}(\th_{0}, \dots, \widehat{\th_{j}}, \dots, \th_{3}) \\
	&= \fint \cos(\varphi) \, c(\varphi,\th_{0}, \dots, \th_{3}) \, d\varphi + d{c}^{\sharp}(\th_{0}, \dots, \th_{3}).
\end{split}
\]
This establishes the first identity. Likewise, to prove the second identity we compute
\[
\begin{split}
	L_{N}^{(4)}(I(c))(\th_{0}, \dots, \th_{3}) &= \left. \frac{d}{dt} \right|_{t=0} \fint \tilde{c}(\varphi, n_{t}.\th_{0}, \dots, n_{t}.\th_{3}) \, d\varphi \\
	&= \fint \frac{d}{d\varphi} \left. \frac{d(n_{t}.\varphi)}{dt} \right|_{t=0} \tilde{c}(\varphi, \th_{0}, \dots, \th_{3}) \, d\varphi \\
	&= \left. \fint \frac{d}{d\varphi} \bigl( 1 - \cos(n_{t}.\varphi) \bigr) \right|_{t=0} \tilde{c}(\varphi, \th_{0}, \dots, \th_{3}) \, d\varphi \\
	&= \fint \sin(\varphi) \, \tilde{c}(\varphi, \th_{0}, \dots, \th_{3}) \, d\varphi,
\end{split}
\]
and then integrate the above cocycle identity against $\sin(\varphi)$.
\end{proof}

\begin{proof}[Proof of Proposition \ref{PropositionPDEForf}]
	First of all, we observe that for $f \in C^{\infty}((S^{1})^{(3)})^{K}$ we may choose the function
\[
P_{c}(f) = I(c) + df
\]
to be smooth along $G$-orbits. This follows by an argument as in the proof of Proposition \ref{PropositionParametrization}. Hence the primitive $P_{c}(f)$ is $G$-invariant if and only if it is infinitesimally $G$-invariant. Since the subgroups $K$, $A$ and $N$ generate the group $G$, this in turn is equivalent to $P_{c}(f)$ satisfying the system of partial differential equations
\[
\left\{
\begin{aligned}
	L_{K}^{(4)} P_{c}(f) &= 0 \\
	L_{A}^{(4)} P_{c}(f) &= 0 \\
	L_{N}^{(4)} P_{c}(f) &= 0.
\end{aligned}
\right.
\]
The first equation is automatically satisfied since $P_{c}(f)$ is $K$-invariant by Proposition \ref{PropositionParametrization}. Writing out the definition of $P_{c}(f)$ and applying Lemmas \ref{LemmaOperatorsCommuteWithDifferentials} and \ref{LemmaOperatorsActOnPrimitive}, the remaining two equations are seen to be equivalent to
\[
\left\{
\begin{aligned}
	d\bigl( L_{A}^{(3)}f \bigr) &= dc^{\sharp} \\
	d\bigl( L_{N}^{(3)}f \bigr) &= dc^{\flat}.
\end{aligned}
\right.
\]
Applying the operator $I$ as in the proof of Proposition \ref{PropositionParametrization}, we conclude that $f\in C^\infty((S^{1})^{(3)})^{K}$ solves this system if and only if there exist \emph{measurable} functions $v^{\sharp}$ and $v^{\flat}$ such that the triple $(f, v^{\sharp}, v^{\flat})$ satisfies system \eqref{EquationSystemForf}. However, it is not difficult to see that $v^{\sharp}$ and $v^{\flat}$ can be chosen to be smooth. In fact, by \eqref{EquationSystemForf} the functions $dv^{\sharp}$ and $dv^{\flat}$ are differences of smooth functions and hence smooth. Now replace $v^{\sharp}$ and $v^{\flat}$ by $I(dv^{\sharp})$ and $I(dv^{\flat})$.
\end{proof}

\subsection{The Frobenius integrability condition}

We now turn to the problem of finding solutions of system \eqref{EquationSystemForf}. As we shall see in Proposition \ref{PropositionIntegrabilityCondition} below, it follows from the classical Frobenius theorem that this system admits a smooth solution $(f, v^{\sharp}, v^{\flat})$ if and only if the functions $v^{\sharp}$ and $v^{\flat}$ satisfy a certain integrability condition. In order to state the result we need to introduce the function
\begin{equation} \label{EquationDefinitionOfccheck}
	\check{c}(\phi_{1},\phi_{2}) \deq \fint \fint \fint \sin(\eta-\varphi) \, c(\eta, \varphi, \psi, \phi_{1}, \phi_{2}) \, d\eta \, d\varphi \, d\psi.
\end{equation}

\begin{lemma} \label{LemmaKInvarianceOfcchek}
	The function $\check{c}$ is smooth on $(S^{1})^{(2)}$ and $K$-invariant.
\end{lemma}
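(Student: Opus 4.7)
The plan is to prove the two claims separately, with smoothness following the same template as in Proposition \ref{PropositionParametrization} and Lemma \ref{LemmaOperatorsActOnPrimitive}, and $K$-invariance being an easy change-of-variables argument.

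For $K$-invariance: I would pick $\xi \in \R$ and compute $\check{c}(k_\xi.\phi_1, k_\xi.\phi_2)$. Using $K \subset G$ and the $G$-invariance of $c$, applied to the diagonal shift by $\xi$, one has $c(\eta, \varphi, \psi, \phi_1 + \xi, \phi_2 + \xi) = c(\eta - \xi, \varphi - \xi, \psi - \xi, \phi_1, \phi_2)$ almost everywhere. Then substituting $\eta \mapsto \eta + \xi$, $\varphi \mapsto \varphi + \xi$, $\psi \mapsto \psi + \xi$ in the integral (which is legal because $\mu_K$ is $K$-invariant), I note that $\sin(\eta - \varphi)$ is invariant under the simultaneous shift, and so the right-hand side reduces to $\check{c}(\phi_1, \phi_2)$.

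For smoothness: I would first invoke Lemma \ref{LemmaInvariantRepresentatives} to pick a $G$-invariant representative $\tilde{c} \in \mathcal{L}^\infty((S^1)^5)^G$. Since the $G$-action on $(S^1)^{(2)}$ is transitive (the $3$-transitivity of $G$ on $S^1$ implies $2$-transitivity on distinct pairs), it suffices to show that $g \mapsto \check{c}(g.\phi_1, g.\phi_2)$ is smooth on $G$ for one fixed base point $(\phi_1, \phi_2) \in (S^1)^{(2)}$. Rewriting
\[
\check{c}(g.\phi_1, g.\phi_2) = \fint\fint\fint \sin(\eta-\varphi)\, \tilde{c}(\eta, \varphi, \psi, g.\phi_1, g.\phi_2)\, d\eta\, d\varphi\, d\psi
\]
and using $G$-invariance of $\tilde{c}$ together with the change of variables $\eta \mapsto g.\eta$, $\varphi \mapsto g.\varphi$, $\psi \mapsto g.\psi$, one obtains (exactly as in the proof of Proposition \ref{PropositionParametrization}) an integral representation in which the dependence on $g$ is carried by the Radon-Nikodym derivative $\frac{d(g.\mu_K)}{d\mu_K}$ and by the now $g$-dependent weight $\sin(g.\eta - g.\varphi)$. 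Both are smooth in $g$ by \cite[Prop.~8.43]{Knapp/Lie-groups-beyond-an-introduction}, and differentiation under the integral sign is justified because $\tilde{c}$ is bounded and the integration domain has finite measure. Hence $g \mapsto \check{c}(g.\phi_1, g.\phi_2)$ is smooth on $G$, and since $G$-orbits are open in $(S^1)^{(2)}$, $\check{c}$ is smooth on $(S^1)^{(2)}$.

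There is no real obstacle here: $K$-invariance is immediate from the fact that the kernel $\sin(\eta - \varphi)$ is a function of $\eta - \varphi$ (hence diagonally $K$-invariant in the integration variables), and smoothness is the standard lifting-plus-Radon-Nikodym argument used throughout the section. The only minor point to record is the transitivity of $G$ on $(S^1)^{(2)}$, needed to upgrade smoothness along a single $G$-orbit to smoothness on the whole configuration space.
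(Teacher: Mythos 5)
Your proof is correct and follows essentially the same route as the paper: the $K$-invariance computation (diagonal shift by $\xi$ plus a change of variables in $\eta,\varphi,\psi$, using that $\sin(\eta-\varphi)$ only depends on $\eta-\varphi$) is exactly the paper's argument, and the smoothness part is the same invariant-representative-plus-Radon--Nikodym lifting argument that the paper invokes by reference to Proposition \ref{PropositionParametrization}, merely spelled out in more detail. The only minor imprecision is your closing phrase ``since $G$-orbits are open in $(S^{1})^{(2)}$'': because the action on $(S^{1})^{(2)}$ is transitive but not free, the orbit map $G \to (S^{1})^{(2)}$ is a submersion rather than a local diffeomorphism, so passing from smoothness of $g \mapsto \check{c}(g.\phi_{1},g.\phi_{2})$ to smoothness of $\check{c}$ uses the existence of local smooth sections of this submersion (standard for transitive Lie group actions) rather than openness alone.
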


\begin{proof}
	An argument as in the proof of Proposition \ref{PropositionParametrization} shows that $\check{c} \in C^{\infty}((S^{1})^{(2)})$. By $K$-invariance of $c$ and of the measure, for every $\xi \in [0,2\pi]$ we have
\[
\begin{split}
	\check{c}(k_{\xi}.\phi_{1}, k_{\xi}.\phi_{2}) &= \fint \fint \fint \sin(\eta-\varphi) \, c(\eta, \varphi, \psi, \phi_{1} + \xi, \phi_{2} + \xi) \, d\eta \, d\varphi \, d\psi \\
	&= \fint \fint \fint \sin(\eta-\varphi) \, c(\eta-\xi, \varphi-\xi, \psi-\xi, \phi_{1}, \phi_{2}) \, d\eta \, d\varphi \, d\psi \\
	&= \fint \fint \fint \sin \bigl( (\eta+\xi)-(\varphi+\xi) \bigr) \, c(\eta, \varphi, \psi, \phi_{1}, \phi_{2} ) \, d\eta \, d\varphi \, d\psi = \check{c}(\phi_{1}, \phi_{2}). \qedhere
\end{split}
\]
\end{proof}

\begin{proposition}[Integrability condition] \label{PropositionIntegrabilityCondition}
	The system \eqref{EquationSystemForf} admits a solution $(f,v^{\sharp},v^{\flat})$ if and only if the pair $(v^{\sharp},v^{\flat})$ satisfies the system of partial differential equations
\begin{equation} \label{EquationFrobeniusSystem}
	\left\{
	\begin{aligned}
		d\bigl( L_{K}^{(2)} v^{\sharp} + v^{\flat} \bigr) &= 0 \\
		d\bigl( L_{K}^{(2)} v^{\flat} - v^{\sharp} \bigr) &= 0 \\
		d\bigl( L_{K}^{(2)} v^{\sharp} - L_{N}^{(2)} v^{\sharp} + L_{A}^{(2)} v^{\flat} - \check{c} \bigr) &= 0. \\
	\end{aligned}
	\right.
\end{equation}
\end{proposition}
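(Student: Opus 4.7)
My plan is to read \eqref{EquationSystemForf} as an overdetermined Pfaffian system for the unknown $f$, augmented by the constraint $L_K^{(3)}f=0$ coming from $K$-invariance, and to derive \eqref{EquationFrobeniusSystem} as its Frobenius integrability conditions; the three identities of \eqref{EquationFrobeniusSystem} will correspond one-to-one with the three Lie brackets of $L_K$, $L_A$, $L_N$. As preparation I would record two algebraic facts. A direct calculation in angular coordinates using Lemma~\ref{LemmaInfinitesimalANAction} gives the bracket relations
\begin{equation*}
[L_K^{(n)}, L_A^{(n)}] = L_K^{(n)} - L_N^{(n)}, \qquad [L_K^{(n)}, L_N^{(n)}] = L_A^{(n)}, \qquad [L_A^{(n)}, L_N^{(n)}] = L_N^{(n)},
\end{equation*}
and a change of variables $\eta \mapsto \eta + \xi$, $\varphi \mapsto \varphi + \xi$ in \eqref{EquationDefinitionOfcsharp} and \eqref{EquationDefinitionOfcflat}, using the $G$-invariance of $c$ together with the $K$-invariance of $\mu_K$, gives $L_K^{(3)} c^\sharp = -c^\flat$ and $L_K^{(3)} c^\flat = c^\sharp$.

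For the forward implication, assume $(f,v^\sharp,v^\flat)$ solves \eqref{EquationSystemForf}. Apply $L_K^{(3)}$ to the first equation of \eqref{EquationSystemForf}, commute $L_K^{(3)}$ through $d$ by Lemma~\ref{LemmaOperatorsCommuteWithDifferentials}, and use $L_K^{(3)} f = 0$ together with $[L_K^{(3)}, L_A^{(3)}]f = -L_N^{(3)} f = -c^\flat - dv^\flat$; the first equation of \eqref{EquationFrobeniusSystem} falls out. An analogous computation starting from the second equation of \eqref{EquationSystemForf} and using $[L_K^{(3)}, L_N^{(3)}] f = L_A^{(3)} f = c^\sharp + dv^\sharp$ yields the second. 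For the third, apply $L_A^{(3)}$ to the $N$-equation of \eqref{EquationSystemForf} and $L_N^{(3)}$ to the $A$-equation, subtract, and use $[L_A^{(3)}, L_N^{(3)}] f = L_N^{(3)} f$ to obtain $d(L_A^{(2)} v^\flat - L_N^{(2)} v^\sharp - v^\flat) = L_N^{(3)} c^\sharp - L_A^{(3)} c^\flat + c^\flat$. It then remains to establish the auxiliary identity
\begin{equation*}
L_N^{(3)} c^\sharp - L_A^{(3)} c^\flat + c^\flat = d\check{c}.
\end{equation*}
I would prove this by multiplying the six-variable cocycle equation $dc(\eta,\varphi,\psi,\theta_0,\theta_1,\theta_2) = 0$ by $\sin(\eta-\varphi) = \sin\eta\cos\varphi - \cos\eta\sin\varphi$ and integrating against $d\mu_K(\eta)\,d\mu_K(\varphi)\,d\mu_K(\psi)$: three of the six terms vanish because $\int_{S^1}\sin = \int_{S^1}\cos = 0$, three more assemble, by the definition of $\check{c}$ in \eqref{EquationDefinitionOfccheck}, into $-d\check{c}(\theta_0,\theta_1,\theta_2)$, and the remaining term $\iint \sin(\eta-\varphi)\,c(\eta,\varphi,\theta_0,\theta_1,\theta_2)\,d\mu_K(\eta)\,d\mu_K(\varphi)$ equals $L_N^{(3)} c^\sharp - L_A^{(3)} c^\flat + c^\flat$ after an explicit Jacobian computation mimicking the one in the proof of Lemma~\ref{LemmaOperatorsActOnPrimitive}. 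Substituting this auxiliary identity and using the first equation of \eqref{EquationFrobeniusSystem} (in the form $dv^\flat = -d(L_K^{(2)} v^\sharp)$) then produces the third equation of \eqref{EquationFrobeniusSystem}.

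For the converse, the same chain of computations runs in reverse: given $(v^\sharp, v^\flat)$ satisfying \eqref{EquationFrobeniusSystem}, the three identities of \eqref{EquationFrobeniusSystem} together with the bracket relations and the auxiliary cocycle identity for $\check{c}$ are precisely the Frobenius integrability conditions for the overdetermined system $L_K^{(3)} f = 0$, $L_A^{(3)} f = c^\sharp + dv^\sharp$, $L_N^{(3)} f = c^\flat + dv^\flat$. Since $L_K^{(3)}, L_A^{(3)}, L_N^{(3)}$ span the tangent space at every point of each of the two open $G$-orbits in $(S^{1})^{(3)}$, the classical Frobenius theorem produces a smooth $K$-invariant $f$ solving \eqref{EquationSystemForf} on each of these orbits, hence on all of $(S^{1})^{(3)}$.

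The principal difficulty lies in the cocycle identity $L_N^{(3)} c^\sharp - L_A^{(3)} c^\flat + c^\flat = d\check{c}$. Unlike the first two equations of \eqref{EquationFrobeniusSystem}, which follow mechanically from bracket identities and the $K$-invariance of $f$, this identity is a genuinely analytic statement: one must simultaneously differentiate the trigonometric weights defining $c^\sharp$, $c^\flat$ and the Jacobians of the non-compact $A$- and $N$-flows on $S^1$, and then verify that the resulting proliferation of Jacobian and trigonometric terms reorganises into precisely the expression for $d\check{c}$ produced by integrating the six-variable cocycle equation against $\sin(\eta-\varphi)$.
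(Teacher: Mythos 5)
Your proposal is correct and is essentially the paper's own argument: the paper proves this in Appendix A by lifting to $D\times\R$ and expressing solvability as involutivity of the distribution spanned by $L_K^{(3)}$, $L_A^{(3)}+(c^\sharp+dv^\sharp)\,\partial_{\theta_3}$ and $L_N^{(3)}-L_K^{(3)}+(c^\flat+dv^\flat)\,\partial_{\theta_3}$, and the bracket computations there are exactly your commutator manipulations, with your two preparatory facts being Lemma \ref{LemmaKActionOncSharpAndcFlat} and (combined with it) Lemma \ref{LemmaDerivativeOfccheck}, the latter proved by the same device of integrating the cocycle identity against $\sin(\eta-\varphi)$ together with the Jacobian computation you flag as the main work. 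The only slip is bookkeeping: of the six terms of $dc(\eta,\varphi,\psi,\theta_0,\theta_1,\theta_2)$ only the two lacking $\eta$- or $\varphi$-dependence vanish, the three omitting a $\theta_j$ assemble into $-d\check c$, and one term remains ($2+3+1$, not $3+3+1$).
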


The proof of the proposition relies on the Frobenius theorem and will be deferred to Appendix~\ref{SectionTheFrobeniusIntegrabilityCondition}. In the sequel, we shall refer to system \eqref{EquationFrobeniusSystem} as the \emph{Frobenius system}. It will be enough for our purposes to find a function $f$ satisfying system \eqref{EquationSystemForf} for some pair $(v^{\sharp},v^{\flat})$. We will hence not attempt to find all solutions of the system \eqref{EquationFrobeniusSystem}. Rather, we will construct a single special solution $(v^{\sharp},v^{\flat})$.

\begin{proposition} \label{PropositionPDEForr}
	Let $r \in C^{\infty}((0, 2\pi),\C)$ be a smooth complex-valued solution of the ordinary differential equation
\begin{equation} \label{EquationODEForr}
	(1 - e^{-i\phi}) \cdot \frac{d r}{d\phi} = i \, r(\phi) - \check{c}(0, \phi).
\end{equation}
By Convention \ref{ConventionAngularCoordinates}, we may define a function $v \in C^{\infty}((S^{1})^{(2)},\C)$ by
\begin{equation} \label{EquationDefinitionOfv}
	v(\th_{1}, \th_{2}) \deq e^{i\th_{1}} \, r(\th_{2} - \th_{1}).
\end{equation}
Then the pair $(v^{\sharp}, v^{\flat}) \deq ({\rm{Re}}(v), {\rm{Im}}(v))$ is a solution of the Frobenius system \eqref{EquationFrobeniusSystem}.
\end{proposition}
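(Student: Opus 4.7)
The plan is to recast the three real equations of the Frobenius system \eqref{EquationFrobeniusSystem} in terms of the complex-valued function $v = v^{\sharp} + i v^{\flat}$, exploiting that the ansatz \eqref{EquationDefinitionOfv} is naturally complex. First, I observe that the first two equations of \eqref{EquationFrobeniusSystem} together amount to $d(L_{K}^{(2)} v - iv) = 0$, while the combination appearing in the third equation can be rewritten as
\[
(L_{K}^{(2)} - L_{N}^{(2)}) v^{\sharp} + L_{A}^{(2)} v^{\flat} = \mathrm{Re}\bigl( e^{-i\th_{1}} \partial_{\th_{1}} v + e^{-i\th_{2}} \partial_{\th_{2}} v \bigr),
\]
which follows immediately from $L_{K}^{(2)} - L_{N}^{(2)} = \cos(\th_{1}) \partial_{\th_{1}} + \cos(\th_{2}) \partial_{\th_{2}}$ and the analogous formula for $L_{A}^{(2)}$ in terms of sines.

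Next I would verify that $v(\th_{1},\th_{2}) = e^{i\th_{1}} r(\th_{2} - \th_{1})$ satisfies both complex identities \emph{pointwise}, so that the three Frobenius equations hold in the much stronger sense that the arguments of $d$ are identically zero. Substituting the ansatz gives $\partial_{\th_{1}} v = iv - e^{i\th_{1}} r'(\th_{2} - \th_{1})$ and $\partial_{\th_{2}} v = e^{i\th_{1}} r'(\th_{2} - \th_{1})$. The $r'$-terms cancel exactly in $L_{K}^{(2)} v = \partial_{\th_{1}} v + \partial_{\th_{2}} v$, yielding $L_{K}^{(2)} v = iv$. Splitting into real and imaginary parts gives $L_{K}^{(2)} v^{\sharp} + v^{\flat} = 0$ and $L_{K}^{(2)} v^{\flat} - v^{\sharp} = 0$, which are the first two equations of \eqref{EquationFrobeniusSystem} in sharpened form.

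For the third equation, setting $\phi \deq \th_{2} - \th_{1}$ and using the expressions for the partials of $v$, the complex quantity $e^{-i\th_{1}} \partial_{\th_{1}} v + e^{-i\th_{2}} \partial_{\th_{2}} v$ simplifies to $i r(\phi) - (1 - e^{-i\phi}) r'(\phi)$. The ODE \eqref{EquationODEForr} then converts this into $\check{c}(0, \phi)$, which is real-valued since $c$ is real-valued. Taking real parts produces $(L_{K}^{(2)} - L_{N}^{(2)}) v^{\sharp} + L_{A}^{(2)} v^{\flat} = \check{c}(0, \th_{2} - \th_{1})$, and the $K$-invariance of $\check{c}$ from Lemma \ref{LemmaKInvarianceOfcchek} identifies this with $\check{c}(\th_{1}, \th_{2})$. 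Hence the argument of $d$ in the third Frobenius equation vanishes identically.

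The proof is essentially a three-line computation once the complex repackaging is set up, so I do not expect any serious obstacle; the content of the proposition is really the discovery of the ansatz \eqref{EquationDefinitionOfv}, not the verification. The only mild point worth flagging is that $r'$ must be evaluated inside the domain $(0, 2\pi)$ where $r$ is defined: since $(\th_{1}, \th_{2}) \in (S^{1})^{(2)}$ and angular arithmetic is taken modulo $2\pi$ by Convention \ref{ConventionAngularCoordinates}, the value $\phi = \th_{2} - \th_{1}$ automatically lies in $(0, 2\pi)$.
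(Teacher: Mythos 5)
Your proof is correct and takes essentially the same route as the paper: both reduce the Frobenius system to the sharpened complex equations $L_{K}^{(2)}v = i\,v$ and $e^{-i\th_{1}}\partial_{\th_{1}}v + e^{-i\th_{2}}\partial_{\th_{2}}v = \check{c}$, and then identify the second with the ODE \eqref{EquationODEForr} via $\phi = \th_{2}-\th_{1}$ and the $K$-invariance of $\check{c}$ from Lemma \ref{LemmaKInvarianceOfcchek}. The only difference is presentational: the paper channels the computation through the auxiliary function $u(\th_{1},\th_{2}) = e^{-i\th_{1}}v(\th_{1},\th_{2})$, whereas you substitute the ansatz \eqref{EquationDefinitionOfv} directly.
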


\begin{proof}
	We remind the reader that throughout the proof we adhere to Convention \ref{ConventionAngularCoordinates}. First observe that if $v$ is a solution of the system
\begin{equation} \label{EquationComplexFrobeniusSystem}
	\left\{
	\begin{aligned}
		L^{(2)}_{K} v &= i \, v \\
		\displaystyle
		e^{-i \th_{1}} \frac{\partial v}{\partial \th_{1}} + e^{-i \th_{2}} \frac{\partial v}{\partial \th_{2}} &= \check{c},
	\end{aligned}
	\right.
\end{equation}
then $(v^{\sharp}, v^{\flat}) \deq ({\rm Re}(v), {\rm Im}(v))$ is a solution of \eqref{EquationFrobeniusSystem}. Indeed, taking real and imaginary parts of the first equation in \eqref{EquationComplexFrobeniusSystem} we obtain
\[
L_{K}^{(2)} v^{\sharp} =  - v^{\flat}, \quad L_{K}^{(2)}v^{\flat} = v^{\sharp},
\]
while taking the real part of the second equation yields
\[
L_{K}^{(2)} v^{\sharp} - L_{N}^{(2)} v^{\sharp} + L_{A}^{(2)} v^{\flat} = \check{c}.
\]
Next consider the transformation
\[
u(\th_{1}, \th_{2}) \deq e^{-i\th_{1}} \, v(\th_{1}, \th_{2}).
\]
Then the first equation in \eqref{EquationComplexFrobeniusSystem} is equivalent to
\begin{equation} \label{EquationFirstForu}
	L^{(2)}_{K} u = 0,
\end{equation}
and the second equation is equivalent to
\begin{eqnarray} \label{EquationSecondForu}
\begin{aligned}
	\check{c}(\th_{1}, \th_{2}) &= e^{-i\theta_1} \, \frac{\partial}{\partial \th_{1}} \left(e^{i\th_{1}} \, u(\th_{1}, \th_{2}) \right) + e^{-i \th_{2}} \, \frac{\partial}{\partial \th_{2}} \left(e^{i\th_{1}} \, u(\th_{1}, \th_{2}) \right) \\
	&= i \, u(\th_{1}, \th_{2}) + \frac{\partial u}{\partial \th_{1}} + e^{i (\th_{1}-\th_{2})} \, \frac{\partial u}{\partial \th_{2}} \\
	&= i \, u(0, \th_{2}-\th_{1}) + \bigl( 1 - e^{i(\th_{1}-\th_{2})} \bigr) \, \frac{\partial u}{\partial \th_{1}}.
\end{aligned}
\end{eqnarray}
Here we used that $\partial_{\th_{1}}u = - \partial_{\th_{2}}u$ by \eqref{EquationFirstForu}. Let now $r$ be a solution of equation \eqref{EquationODEForr} and set $v(\th_{1}, \th_{2}) \deq e^{i\th_{1}} \, r(\th_{2} - \th_{1})$. Then $u(\th_{1}, \th_{2}) = r(\th_{2}-\th_{1})$ obviously satisfies \eqref{EquationFirstForu}. By $K$-invariance of $\check{c}$ from Lemma \ref{LemmaKInvarianceOfcchek} we have $\check{c}(\th_{1}, \th_{2}) = \check{c}(0, \th_{2}-\th_{1})$. It follows that $u$ solves equation \eqref{EquationSecondForu}.
\end{proof}

\section{Construction of primitives}
\label{SectionConstructionOfPrimitives}

\subsection{Solving the Frobenius system}

The first step in the construction of the primitive $P_{c}(f)$ is to solve the differential equation \eqref{EquationODEForr} for the function $r$. As we have seen in Propositions~\ref{PropositionPDEForr} and \ref{PropositionIntegrabilityCondition}, the function $r$ then gives rise to a special solution $(v^{\sharp},v^{\flat})$ of the Frobenius system \eqref{EquationFrobeniusSystem}, which in turn determines the inhomogeneities in system \eqref{EquationSystemForf} in such a way that this system admits a solution $f$.

The complex ordinary differential equation \eqref{EquationODEForr} can be solved by applying the method of variation of constants. Its general solution $r \in C^{\infty}((0, 2\pi),\C)$ is given by
\[
r(\phi) = \bigl( 1-e^{i\phi} \bigr) \cdot \left( C_{0} - \frac{1}{2} \int_{\pi}^{\phi} \frac{\check{c}(0,\zeta)}{1-\cos(\zeta)} \, d\zeta \right),
\]
where $C_{0}$ is an arbitrary complex constant. Note that different choices of $C_{0}$ lead to cohomologous cochains $v$. We may therefore assume $C_{0} = 0$, obtaining
\begin{equation} \label{EquationDefinitionOfr}
	r(\phi) = - \frac{1}{2} \bigl( 1-e^{i\phi} \bigr) \cdot \int_{\pi}^{\phi} \frac{\check{c}(0,\zeta)}{1-\cos(\zeta)} \, d\zeta.
\end{equation}
We shall henceforth be working with the function $r$ defined by this formula. A crucial observation is the following lemma.

\pagebreak

\begin{lemma}[Boundedness of the inhomogeneity] \label{LemmaBoundednessOfrAndw}
	The function $r$ is bounded. In particular, the inhomogeneities in system \eqref{EquationSystemForf} are bounded.
\end{lemma}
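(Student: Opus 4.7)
The function $r$ is smooth on $(0, 2\pi)$ directly from its definition \eqref{EquationDefinitionOfr}, since the integrand $\check{c}(0,\zeta)/(1-\cos\zeta)$ has no singularities in the interior of the interval and $\pi$ lies strictly inside. The only thing to check is therefore that $r$ remains bounded as $\phi \to 0^+$ and as $\phi \to 2\pi^-$. At these two endpoints the integrand blows up like $\zeta^{-2}$, respectively $(2\pi - \zeta)^{-2}$, while the prefactor $1 - e^{i\phi}$ vanishes to first order. The plan is to show that these two effects exactly compensate, producing a uniform bound independent of $\phi$.

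The argument will rest on three elementary estimates. First, $\check{c}$ is bounded on $(S^{1})^{(2)}$: since $c \in L^{\infty}((S^{1})^{5})^{G}$ and $\check{c}$ is obtained from $c$ by integration against the bounded kernel $\sin(\eta - \varphi)$, one has $\|\check{c}\|_{\infty} \leq \|c\|_{\infty}$. Second, from $1 - \cos\zeta = 2\sin^{2}(\zeta/2)$ together with the concavity bound $\sin x \geq 2x/\pi$ on $[0, \pi/2]$, one obtains $1 - \cos\zeta \geq 2\zeta^{2}/\pi^{2}$ on $[0, \pi]$ and the analogous bound $1 - \cos\zeta \geq 2(2\pi - \zeta)^{2}/\pi^{2}$ on $[\pi, 2\pi]$. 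Third, $|1 - e^{i\phi}| = 2|\sin(\phi/2)| \leq \min(\phi, 2\pi - \phi)$.

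Combining these estimates, for $\phi \in (0, \pi]$ I get
\[
|r(\phi)| \leq \tfrac{1}{2} \cdot \phi \cdot \|\check{c}\|_{\infty} \cdot \tfrac{\pi^{2}}{2} \int_{\phi}^{\pi} \tfrac{d\zeta}{\zeta^{2}} \leq \tfrac{\pi^{2}}{4} \|\check{c}\|_{\infty},
\]
and the symmetric calculation using $2\pi - \phi$ in place of $\phi$ yields the same bound on $[\pi, 2\pi)$. This establishes boundedness of $r$. For the ``in particular'' assertion, the function $v(\th_{1}, \th_{2}) = e^{i\th_{1}} r(\th_{2} - \th_{1})$ from \eqref{EquationDefinitionOfv} satisfies $|v| = |r|$, so $v^{\sharp} = \mathrm{Re}(v)$ and $v^{\flat} = \mathrm{Im}(v)$ are bounded, and hence so are their coboundaries $dv^{\sharp}$ and $dv^{\flat}$. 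Boundedness of $c^{\sharp}$ and $c^{\flat}$ follows at once from \eqref{EquationDefinitionOfcsharp}, \eqref{EquationDefinitionOfcflat} and $c \in L^{\infty}$, so the inhomogeneities $c^{\sharp} + dv^{\sharp}$ and $c^{\flat} + dv^{\flat}$ of \eqref{EquationSystemForf} are uniformly bounded.

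I do not anticipate a substantive obstacle: the particular choice $C_{0} = 0$ in the general solution of \eqref{EquationODEForr} was engineered precisely to arrange this cancellation of singularities at the two boundary points. The one point worth flagging is that the argument relies only on the $L^{\infty}$ estimate for $\check{c}$; it does not require any continuous extension of $\check{c}$ to the diagonal $\{\phi_{1} = \phi_{2}\} \subset (S^{1})^{2}$, which is not available a priori.
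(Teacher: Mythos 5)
Your proof is correct and follows essentially the same route as the paper: bound $\|\check{c}\|_{\infty}\le\|c\|_{\infty}$ and show that the first-order vanishing of the prefactor $1-e^{i\phi}$ compensates the divergence of $\int_{\pi}^{\phi}(1-\cos\zeta)^{-1}\,d\zeta$ at the endpoints. The only difference is that the paper evaluates this integral in closed form (the antiderivative of $(1-\cos\zeta)^{-1}$ is $-\cot(\zeta/2)$), so that $|1-e^{i\phi}|\cdot|\cot(\phi/2)|=2|\cos(\phi/2)|\le 2$ yields the sharp bound $|r|\le\|c\|_{\infty}$, whereas your comparison estimates $1-\cos\zeta\ge 2\zeta^{2}/\pi^{2}$ and $|1-e^{i\phi}|\le\min(\phi,2\pi-\phi)$ give the slightly weaker, but equally sufficient, constant $\tfrac{\pi^{2}}{4}\|c\|_{\infty}$.
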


\begin{proof}
	Observe that $\| \check{c} \|_{\infty} \le \| c \|_{\infty}$ and $\sqrt{1-\cos(\phi)} \cdot |\cot(\phi/2)| = \sqrt{2} \cdot |\cos(\phi/2)|\leq \sqrt{2}$. Hence for all $\phi \in (0,2\pi)$ we have an estimate
\[
\begin{split}
	|r(\phi)| &\leq \frac{1}{2} \cdot \| \check{c} \|_{\infty} \cdot  |1-e^{i\phi}| \cdot \left|\int_{\pi}^\phi \frac{1}{1-\cos(\zeta)} \, d\zeta\right| \\
	&= \frac{\sqrt{2}}{2} \cdot \| \check{c} \|_{\infty} \cdot \sqrt{1-\cos(\phi)} \cdot |\cot(\phi/2)| \\
	&\le \| c \|_{\infty}. \qedhere
\end{split}
\]
\end{proof}

\subsection{Cauchy initial value problem}
\label{SubSectionCauchyInitialValueProblem}

Recall that solutions to a first order linear partial differential equation may be constructed explicitly by integration along its characteristic curves, with initial values prescribed on some non-characteristic hypersurface \cite[Ch.\,3]{Caratheodory/Calculus-of-variations-and-partial-differential-equations-of-the-first-order.-Part-I:-Partial-differential-equations-of-the-firs}. We shall now apply this principle in order to explicitly construct solutions of the system \eqref{EquationSystemForf}.

Let the function $r$ be given by formula \eqref{EquationDefinitionOfr}. By Proposition \ref{PropositionPDEForr} and Lemma \ref{LemmaBoundednessOfrAndw} this determines a bounded solution $(v^{\sharp}, v^{\flat}) \deq ({\rm{Re}}(v), {\rm{Im}}(v))$ of the Frobenius system \eqref{EquationFrobeniusSystem} by
\begin{equation} \label{EquationDefinitionOfv2}
	v(\th_{1}, \th_{2}) \deq e^{i\th_{1}} \, r(\th_{2} - \th_{1}).
\end{equation}
We shall henceforth keep the function $v$ defined that way. By Proposition \ref{PropositionIntegrabilityCondition} the system \eqref{EquationSystemForf} then admits a solution $f \in C^{\infty}((S^{1})^{(3)})$ satisfying the system of equations
\begin{subequations}
\begin{align}
	L_{K}^{(3)}f &= 0 \label{EquationCauchyK} \\
	L_{A}^{(3)}f &= c^\sharp + dv^\sharp \label{EquationCauchyA} \\
	L_{N}^{(3)}f &= c^\flat + dv^\flat. \label{EquationCauchyN}
\end{align}
\end{subequations}
We know from Section \ref{SubSectionFundamentalVectorFields} that the characteristic curves for these equations are precisely the orbits for the actions of the one-parameter groups $K=\{k_{\xi}\}$, $A=\{a_{s}\}$ and $N=\{n_{t}\}$ on $(S^{1})^{(3)}$, respectively. In order to construct the function $f$ we may therefore proceed as follows.
\begin{enumerate}[itemsep=1ex, leftmargin=1cm, itemindent=0cm, labelwidth=0cm, labelsep=0.2cm]
	\item We use equation \eqref{EquationCauchyK} in order to construct $f$ with initial values $f_{0} \deq f|_{H_{0}}$ prescribed on the hypersurface
\[
H_{0} \deq \bigl\{ (z_{0},z_{1},z_{2}) \in (S^{1})^{(3)} \,\big|\, z_{0} \deq 1 \bigr\}.
\]
Of course, equation \eqref{EquationCauchyK} just says that $f$ is constant along the $K$-orbits in $(S^{1})^{(3)}$. The hypersurface $H_{0}$ is non-characteristic for the equation \eqref{EquationCauchyK} since it intersects transversally with the $K$-orbits in $(S^{1})^{(3)}$. Note that in order for $f$ to be compatible with the remaining equations \eqref{EquationCauchyA} and \eqref{EquationCauchyN}, the hypersurface $H_{0}$ has to be invariant under the actions of~$A$ and $N$. This, however, is indeed the case since the point $1$ remains fixed under these two actions.	\item We use equation \eqref{EquationCauchyN} in order to construct $f_{0}$ with initial values $f_{1} \deq f_{0}|_{H_{1}}$ prescribed on the union of $A$-orbits
\[
H_{1} \deq \bigl\{ a_{s}.(1,i,-i) \,\big|\, -\infty < s < \infty \bigr\} \cup \bigl\{ a_{s}.(1,-i,i) \,\big|\, -\infty < s < \infty \bigr\} \subset H_{0}.
\]
Note that $H_{1}$ is non-characteristic for the equation \eqref{EquationCauchyN} since it intersects transversally with the $N$-orbits in $H_{0}$. Moreover, in order for $f_{0}$ to be compatible with the remaining equation \eqref{EquationCauchyA}, the curve $H_{1}$ has to be invariant under the action of $A$, which is obviously the case.
	\item We use equation \eqref{EquationCauchyA} in order to construct $f_{1}$ with initial values $f_{2} \deq f_{1}|_{H_{2}}$ prescribed on the set of \emph{base points}
\[
H_{2} \deq \bigl\{ \bigl( 1,e^{2\pi i/3},e^{4\pi i/3} \bigr),\bigl( 1,e^{4\pi i/3},e^{2\pi i/3} \bigr) \bigr\} \subset H_{1}.
\]
Note that equation \eqref{EquationCauchyA}, when restricted to the curve $H_{1}$, becomes an ordinary differential equation which can be solved directly.
\end{enumerate}

In particular, we see that solutions of system \eqref{EquationSystemForf} are uniquely determined by the initial values of $f_{2}$ on the set of base points $H_{2}$ and hence form a $2$-parameter family. We will work out the details of (1) in Section \ref{SubSectionReductionOfVariables}, while the details of (2) and (3) will be worked out in Section~\ref{SubSectionMethodOfCharacteristics}.

\subsection{Reduction of variables}
\label{SubSectionReductionOfVariables}

In angular coordinates, the hypersurface $H_{0}$ introduced in the previous subsection is given by
\[
H_{0} = \bigl\{ (\th_{0},\th_{1},\th_{2}) \in [0,2\pi)^{3} \,\big|\, \th_{0}=0, \th_{1}\neq\th_{2}, \th_{1}\neq0\neq\th_{2} \bigr\}.
\]
The canonical projection $(\th_{0},\th_{1},\th_{2}) \mapsto (\th_{1},\th_{2})$ identifies $H_{0}$ with the domain $\Om \deq (0, 2\pi)^2 \setminus \Delta$, where $\Delta \subset (0, 2\pi)^2$ denotes the diagonal in the open square. The coordinates in $\Om$ will be denoted by $(\phi_{1},\phi_{2})$. Moreover, we write $\Om_{\pm} \deq \{ (\phi_{1},\phi_{2}) \,|\, \phi_{1} \lessgtr \phi_{2} \}$ for the open subsets corresponding to the two $G$-orbits in~$(S^{1})^{(3)}$ consisting of positively and negatively oriented triples. The restriction $f_{0} \deq f|_{H_{0}}$ is then given by $f_{0}(\phi_{1},\phi_{2}) = f(0,\phi_{1},\phi_{2})$. Note that by $K$-invariance the function $f$ is recovered from~$f_{0}$ by
\begin{equation} \label{EquationCorrespondencefAndf0}
	f(\th_{0}, \th_{1}, \th_{2}) = f_{0}(\th_{1}-\th_{0}, \th_{2}-\th_{0}).
\end{equation}
Since the hypersurface $H_{0}$ is invariant under the actions of $A$ and $N$, the system \eqref{EquationSystemForf} restricts to the system
\begin{equation} \label{EquationSystemForf0}
	\left\{
	\begin{aligned}
		L_{A}^{(2)} f_{0} &= c^{\sharp}_{0} + (d v^\sharp)_{0} \\
		L_{N}^{(2)} f_{0} &= c^{\flat}_{0} + (d v^\flat)_{0}.
	\end{aligned}
	\right.
\end{equation}
Here we denote by $c^{\sharp}_{0}$, $c^{\flat}_{0}$, $(d v^\sharp)_{0}$ and $(d v^\flat)_{0}$ the respective restrictions of the functions $c^{\sharp}$, $c^{\flat}$, $d v^\sharp$ and $d v^\flat$, i.\,e.\,,
\begin{equation} \label{EquationDefinitionOfc_0}
	c^{\sharp}_{0}(\phi_{1},\phi_{2}) \deq c^{\sharp}(0,\phi_{1},\phi_{2}), \quad c^{\flat}_{0}(\phi_{1},\phi_{2}) \deq c^{\flat}(0,\phi_{1},\phi_{2})
\end{equation}
and
\begin{equation} \label{EquationDefinitionOfdv_0}
	(d v^\sharp)_{0}(\phi_{1},\phi_{2}) \deq d v^\sharp(0,\phi_{1},\phi_{2}), \quad (d v^\flat)_{0}(\phi_{1},\phi_{2}) \deq d v^\flat(0,\phi_{1},\phi_{2}).
\end{equation}
Note that these functions are smooth on $\Om$.

\subsection{Method of characteristics}
\label{SubSectionMethodOfCharacteristics}

We now construct the function $f_{0}$ by integrating equations \eqref{EquationCauchyA} and \eqref{EquationCauchyN} along their characteristic curves. Recall that the characteristics for these equations are precisely the orbits for the actions of the one-parameter groups $A=\{a_{s}\}$ and $N=\{n_{t}\}$ on the domain $\Om$ (see Figure \ref{FigureOrbits}).
\begin{figure}
	\includegraphics{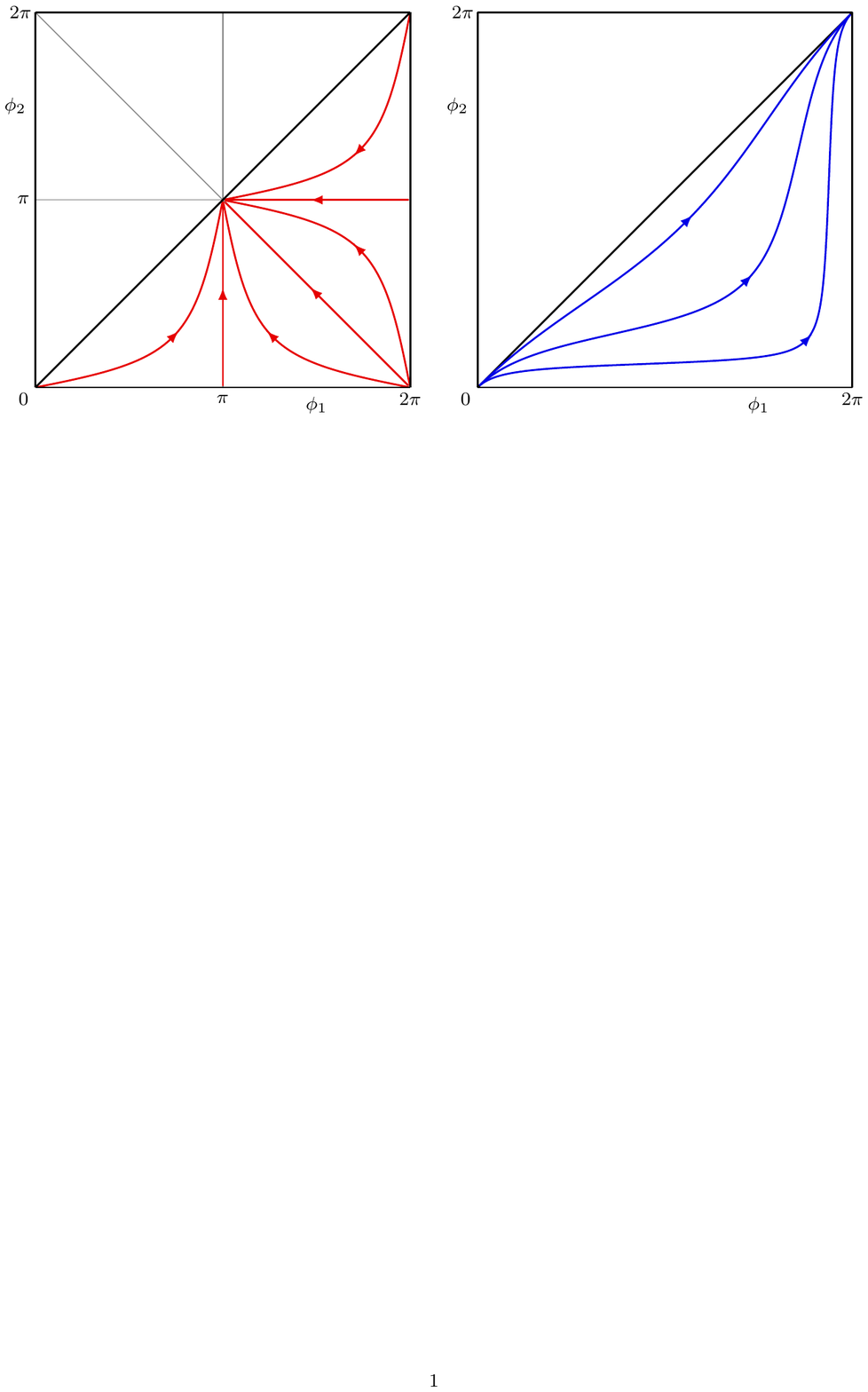}
	\centering
	\caption{$A$-orbits (left) and $N$-orbits (right) in the domain $\Om_{-}$}
	\label{FigureOrbits}
\end{figure}
It will be convenient to abbreviate the inhomogeneities appearing on the right-hand side of system \eqref{EquationSystemForf0} by
\begin{equation} \label{EquationInhomogeneitiesF}
	F^{\sharp}_{c} \deq c^{\sharp}_{0} + (dv^{\sharp})_{0} \quad \text{and} \quad F^{\flat}_{c} \deq c^{\flat}_0 + (dv^{\flat})_{0}.
\end{equation}
If we prescribe the value of $f_{0}$ on a single point in each of the orbits $\Om_{+}$ and $\Om_{-}$, the function~$f_{0}$ will be uniquley determined by the relations
\[
f_{0}(a_{S}.\phi_{1}, a_{S}.\phi_{2}) - f_{0}(\phi_{1}, \phi_{2}) = \int_{0}^{S} F^{\sharp}_{c}(a_{s}.\phi_{1},a_{s}.\phi_{2}) \, ds
\]
and
\[
f_{0}(n_{T}.\phi_{1},n_{T}.\phi_{2}) - f_{0}(\phi_{1},\phi_{2}) = \int_{0}^{T} F^{\flat}_{c}(n_{t}.\phi_{1},n_{t}.\phi_{2}) \, dt.
\]
More precisely, let us denote by
\[
\Deop \deq \bigl\{ (\phi, 2\pi-\phi) \,\big|\, \phi \in (0, 2\pi) \setminus \{ \pi \} \bigr\} \subset \Om
\]
the antidiagonal in $\Om$, which corresponds to the hypersurface $H_{1}$ introduced in Section \ref{SubSectionCauchyInitialValueProblem}. Note that it has two connected components. In order to compute the function $f_{0}$ we first introduce new coordinates on $\Om$ that are adapted to the $N$-orbits. For every point $(\phi_{1},\phi_{2}) \in \Om$ we define $\Phi(\phi_{1},\phi_{2}) \in (0,\pi) \cup (\pi,2\pi)$ in such a way that $( \Phi(\phi_{1}, \phi_{2}), 2\pi-\Phi(\phi_{1},\phi_{2}) )$ is the point of intersection of the antidiagonal with the unique $N$-orbit passing through the point $(\phi_{1},\phi_{2})$. We then define $T(\phi_{1}, \phi_{2}) \in (-\infty,\infty)$ by the relation
\[
(\phi_{1},\phi_{2}) = n_{T(\phi_{1},\phi_{2})}.\bigl( \Phi(\phi_{1},\phi_{2}), 2\pi-\Phi(\phi_{1},\phi_{2}) \bigr).
\]
For later reference we note that
\begin{equation} \label{EquationDefinitionOfT}
	T(\phi_{1},\phi_{2}) = -\frac{1}{2} \left( \cot\left(\frac{\phi_{1}}{2}\right) + \cot\left(\frac{\phi_{2}}{2}\right) \right).
\end{equation}
Here we used that the action of the one-parameter subgroup $\{ n_{t} \}$ is given by the formula $n_{t}.\phi = 2 \, \cot(-t + \arccot(\phi/2))$. Integrating the second equation in \eqref{EquationSystemForf0} along the $N$-orbits in~$\Om$, with initial values $f_{1}$ prescribed on the antidiagonal $\Deop$, we then obtain

\pagebreak

\begin{multline} \label{IntegralFormulaForf0}
	f_{0}(\phi_{1},\phi_{2}) = f_{1}\bigl( (\Phi(\phi_{1},\phi_{2}),2\pi-\Phi(\phi_{1},\phi_{2}) \bigr) \\ + \int_{0}^{T(\phi_{1}, \phi_{2})} F^{\flat}_{c} \bigl( n_{t}.\Phi(\phi_{1}, \phi_{2}), n_{t}.(2\pi-\Phi(\phi_{1}, \phi_{2})) \bigr) \, dt
\end{multline}
for every $(\phi_{1},\phi_{2}) \in \Om$. It remains to compute the function $f_{1}$ along the antidiagonal. Let
\[
\omega_{+}\deq(2\pi/3,4\pi/3) \quad \text{and} \quad \omega_{-}\deq(4\pi/3,2\pi/3)
\]
be the points in $\Om$ corresponding to the base points in $H_{2}$ introduced in Section~\ref{SubSectionCauchyInitialValueProblem}. Note that $\omega_{+}$ and $\omega_{-}$ coincide with the barycenters of the triangles enclosing the domains $\Om_{+}$ and $\Om_{-}$ (see Figure \ref{FigurePath}). Define a new coordinate~$S(\phi) \in (-\infty,\infty)$ on each component of the antidiagonal~$\Deop$ by the relation
\[
(\phi,2\pi-\phi) = a_{S(\phi)}.\omega_{\pm},
\]
depending on whether the point $(\phi,2\pi-\phi)$ lies in $\Om_{+}$ or $\Om_{-}$.
\begin{figure}
	\includegraphics{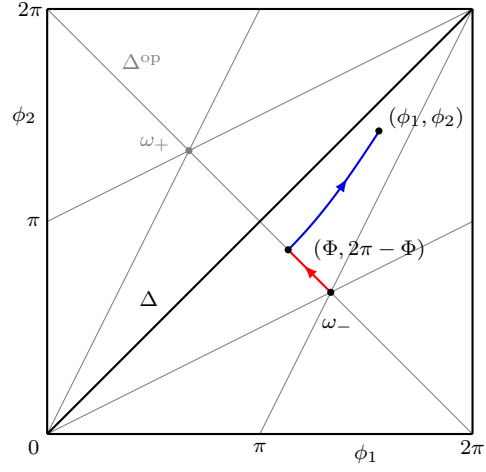}
	\centering
	\caption{A path traveling along $A$- and $N$-orbits from the basepoint $\omega_{-}$ to some point $(\phi_{1},\phi_{2})$ in $\Om_{-}$}
	\label{FigurePath}
\end{figure}
Integrating the first equation in \eqref{EquationSystemForf0} along the $A$-orbits in $\Deop$, with initial values $f_{2}$ prescribed on the base points $\{\omega_{+},\omega_{-}\}$, we get
\begin{equation} \label{IntegralFormulaForf1}
	f_{1}( \phi,2\pi-\phi ) = f_{2}(\omega_{\pm}) + \int_{0}^{S(\phi)} F^{\sharp}_{c} (a_{s}.\omega_{\pm}) \, ds
\end{equation}
for every $\phi \in (0,\pi) \cup (\pi,2\pi)$.

\subsection{Explicit primitives}

Combining the results from the previous subsections, we are now in a position to give the following explicit characterization of primitives.

\pagebreak

\begin{proposition}[Explicit primitives] \label{PropositionExplicitPrimitives}
	Let $v^{\sharp}$ and $v^{\flat}$ be the real and imaginary parts of the function $v$ defined by formula \eqref{EquationDefinitionOfv2}, where $r$ is as in \eqref{EquationDefinitionOfr}. Then the following hold.
\begin{enumerate}[itemsep=1ex, leftmargin=1cm, itemindent=0cm, labelwidth=0cm, labelsep=0.2cm, label=(\roman{enumi})]
	\item Let $f \in C^{\infty}((S^{1})^{(3)})^{K}$. The primitive $P_{c}(f) \in \Pca(c)^{K}$ is $G$-invariant if and only if the function $f$ solves system \eqref{EquationSystemForf}.
	\item There is a one-to-one correspondence between solutions $f \in C^{\infty}((S^{1})^{(3)})^{K}$ of system \eqref{EquationSystemForf} and solutions $f_{0} \in C^{\infty}(\Om)$ of system \eqref{EquationSystemForf0} via the relation
\[
f(\th_{0}, \th_{1}, \th_{2}) = f_{0}(\th_{1}-\th_{0}, \th_{2}-\th_{0}).
\]
	\item Every pair $(f_{0}(\omega_{+}),f_{0}(\omega_{-})) \in \R^{2}$ of initial values uniquely determines a smooth solution $f_{0} \in C^{\infty}(\Om)$ of system \eqref{EquationSystemForf0} by the formula
\begin{multline} \label{EquationFormulaForf0}
	\hspace{5mm} f_{0}(\phi_{1},\phi_{2}) = f_{0}(\omega_{\pm}) + \int_{0}^{S(\Phi(\phi_{1}, \phi_{2}))} F^{\sharp}_{c} (a_{s}.\omega_{\pm}) \, ds \\ + \int_{0}^{T(\phi_{1}, \phi_{2})} F^{\flat}_{c} \bigl( n_{t}.\Phi(\phi_{1}, \phi_{2}), n_{t}.(2\pi-\Phi(\phi_{1}, \phi_{2})) \bigr) \, dt,
\end{multline}
where the functions $F^{\sharp}_{c}$ and $F^{\flat}_{c}$ are as in \eqref{EquationInhomogeneitiesF}. Conversely, any smooth solution of system \eqref{EquationSystemForf0} arises in this way.
\end{enumerate}
\end{proposition}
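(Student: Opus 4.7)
The proof consists in assembling results already established. For part (i), I would combine Proposition \ref{PropositionPDEForf}, Proposition \ref{PropositionIntegrabilityCondition}, and Proposition \ref{PropositionPDEForr}: the first characterizes $G$-invariance of $P_c(f)$ by the existence of \emph{some} pair $(v^{\sharp},v^{\flat})$ solving \eqref{EquationSystemForf}, the second identifies the Frobenius system \eqref{EquationFrobeniusSystem} as the integrability condition, and the third exhibits our specific pair from \eqref{EquationDefinitionOfr} and \eqref{EquationDefinitionOfv2} as a solution of the Frobenius system. Thus system \eqref{EquationSystemForf} with these particular inhomogeneities admits solutions $f$ whose primitives $P_c(f)$ are $G$-invariant, and conversely every $G$-invariant primitive arises from such a solution (after adjusting $f$ by a $K$-invariant cocycle if necessary).

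For part (ii), I would verify the correspondence \eqref{EquationCorrespondencefAndf0}, which is forced by $K$-invariance. In the forward direction, I restrict $f$ to $\{\th_0 = 0\}$: since $\sin(0) = 1 - \cos(0) = 0$, the operators $L_A^{(3)}$ and $L_N^{(3)}$ reduce to $L_A^{(2)}$ and $L_N^{(2)}$ acting on $f_0$, and the inhomogeneities restrict by the definitions \eqref{EquationDefinitionOfc_0} and \eqref{EquationDefinitionOfdv_0}, producing \eqref{EquationSystemForf0}. For the converse, the $K$-invariant extension of $f_0$ to $(S^1)^{(3)}$ is smooth, and the three-variable equations \eqref{EquationSystemForf} propagate from the slice $\{\th_0 = 0\}$ by $K$-translation, using that both sides transform equivariantly under the $K$-action.

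For part (iii), I would implement the two-stage Cauchy scheme of Section \ref{SubSectionCauchyInitialValueProblem}. First, with $f_0(\omega_\pm)$ prescribed at the two base points, I integrate the $A$-equation of \eqref{EquationSystemForf0} along the two $A$-orbits foliating $\Deop$ (admissible because $\Deop$ is $A$-invariant), obtaining $f_1$ on $\Deop$ as in \eqref{IntegralFormulaForf1}. Second, I integrate the $N$-equation along $N$-orbits in $\Om$, using $f_1$ as Cauchy data on the transverse antidiagonal $\Deop$, extending $f_1$ to a smooth $f_0$ on $\Om$ as in \eqref{IntegralFormulaForf0}. Summing the two integrals yields \eqref{EquationFormulaForf0}. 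Smoothness and uniqueness given the initial data follow from the standard theory of first-order linear ODEs, and conversely every smooth solution of \eqref{EquationSystemForf0} arises this way since its values at $\omega_\pm$ can be read off directly.

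The main obstacle is ensuring that the $f_0$ so constructed---which manifestly satisfies the $N$-equation everywhere on $\Om$ and the $A$-equation only on $\Deop$---in fact satisfies the $A$-equation globally on $\Om$. This is precisely the content of the Frobenius integrability condition \eqref{EquationFrobeniusSystem}: because our specific $(v^{\sharp},v^{\flat})$ solves the Frobenius system by Proposition \ref{PropositionPDEForr}, Proposition \ref{PropositionIntegrabilityCondition} guarantees that the $A$-equation is compatible with the $N$-flow and hence preserved as one propagates away from $\Deop$ along $N$-orbits.
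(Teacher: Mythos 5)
Your proposal is correct and follows essentially the same route as the paper: part (i) is Proposition \ref{PropositionPDEForf} (with the specific pair $(v^{\sharp},v^{\flat})$ supplied by Propositions \ref{PropositionIntegrabilityCondition} and \ref{PropositionPDEForr}), part (ii) is the reduction of variables of Section \ref{SubSectionReductionOfVariables}, and part (iii) is the two-stage integration along $A$- and $N$-characteristics of Section \ref{SubSectionMethodOfCharacteristics}, with the Frobenius integrability condition guaranteeing that the $A$-equation propagates along the $N$-flow. Your closing remark isolates exactly the compatibility point that the paper handles implicitly through Propositions \ref{PropositionIntegrabilityCondition} and \ref{PropositionPDEForr}, so no further argument is needed beyond what you indicate.
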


\begin{proof}
	Let $f \in C^{\infty}((S^{1})^{(3)})^{K}$. Assertion (i) holds by Proposition \ref{PropositionPDEForf}, while (ii) was proved in Section \ref{SubSectionReductionOfVariables}. Finally, our considerations in Section \ref{SubSectionMethodOfCharacteristics} show that the function $f_{0}$ satisfies \eqref{EquationSystemForf0} if and only if it is given by formulas \eqref{IntegralFormulaForf0} and \eqref{IntegralFormulaForf1}, in terms of integration along the unique path in the domain $\Om$ starting at~$\omega_{\pm}$ and traveling to $(\phi_{1},\phi_{2})$ along $A$- and $N$-orbits via the point $(\Phi(\phi_{1},\phi_{2}),2\pi-\Phi(\phi_{1},\phi_{2})$ on the antidiagonal (see Figure \ref{FigurePath}), with initial values prescribed at~$\omega_{\pm}$. This proves (iii).
\end{proof}

The proposition achieves the second step in the agenda outlined in Section \ref{SubSectionStrategyOfProof}. In particular, it shows that solutions $f_{0}$ of system \eqref{EquationSystemForf0} form a $2$-parameter family.

\section{Boundedness of primitives}
\label{SectionBoundednessOfPrimitives}

\subsection{Symmetries}

In the last section we saw that $G$-invariant primitives of the cocycle~$c$ come in $2$-parameter families. We will prove in Section \ref{SubSectionBoundedness} below that any such primitive is bounded if it obeys certain additional discrete symmetries. These symmetries are intimately related to symmetries of the cocycle~$c$ itself. We remind the reader of the following basic fact \cite[Scholium 7.4.6]{Monod/Continuous-bounded-cohomology-of-locally-compact-groups}.

\begin{lemma} \label{LemmaAlternatingCocycle}
	Every cocycle is cohomologous to an alternating cocycle.
\end{lemma}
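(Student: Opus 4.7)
The plan is classical antisymmetrization. Given an $n$-cocycle $c \in L^\infty((S^1)^{n+1})^G$, define its alternating representative by
\[
c_{\mathrm{alt}}(z_0, \ldots, z_n) \deq \frac{1}{(n+1)!} \sum_{\sigma \in S_{n+1}} \mathrm{sgn}(\sigma) \, c(z_{\sigma(0)}, \ldots, z_{\sigma(n)}).
\]
It is immediate that $c_{\mathrm{alt}}$ is alternating by construction, bounded with $\|c_{\mathrm{alt}}\|_\infty \leq \|c\|_\infty$, and $G$-invariant since the diagonal $G$-action on $(S^1)^{n+1}$ commutes with permutation of coordinates. It is again a cocycle because the homogeneous differential $d$ is equivariant for the symmetric group actions on successive levels of the complex, in the appropriate sign-twisted sense.

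The real content of the lemma is that $c - c_{\mathrm{alt}}$ is the coboundary of some bounded $G$-invariant cochain. By the linear decomposition
\[
c - c_{\mathrm{alt}} = \frac{1}{(n+1)!} \sum_{\sigma \in S_{n+1}} \bigl( c - \mathrm{sgn}(\sigma) \, \sigma \cdot c \bigr)
\]
and the fact that $S_{n+1}$ is generated by the adjacent transpositions $\tau_i = (i,i+1)$, a telescoping decomposition of each $\sigma$ into its generators reduces the task to the following basic claim: for every $i$, the cochain $c + \tau_i \cdot c$ admits a bounded $G$-invariant primitive $p_i$. The sought primitive of $c - c_{\mathrm{alt}}$ is then assembled as a suitable linear combination of the $p_i$.

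For each adjacent transposition, the primitive $p_i$ is constructed via the standard simplicial chain homotopy between the identity and the transposition action. After selecting an invariant representative $\tilde c \in \mathcal L^\infty((S^1)^{n+1})^G$ of $c$ via Lemma \ref{LemmaInvariantRepresentatives}, one writes $p_i$ as a signed combination of terms of the form $\tilde c(z_0, \ldots, z_i, z_i, z_{i+1}, \ldots, z_{n-1})$, namely the evaluation of $\tilde c$ on a codimension-one diagonal. That the resulting expression satisfies $dp_i = c + \tau_i \cdot c$ is then a mechanical check: it reduces to applying the cocycle identity $d\tilde c = 0$ at suitably degenerated tuples such as $(z_0, \ldots, z_i, z_i, z_{i+1}, \ldots, z_{n-1}, z_n)$, after which all but two terms cancel in pairs. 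Boundedness and $G$-invariance of $p_i$ are inherited directly from those of $\tilde c$.

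The main obstacle is the diagonal evaluation: the sets $\{z_i = z_{i+1}\} \subset (S^1)^{n+1}$ have measure zero, so a generic $L^\infty$ representative of $c$ carries no meaningful values there. Lemma \ref{LemmaInvariantRepresentatives} is precisely what circumvents this difficulty by providing a $G$-invariant honest (not equivalence-class) function $\tilde c$ whose restriction to each diagonal is well defined and remains $G$-equivariant; the remainder of the argument is then purely combinatorial.
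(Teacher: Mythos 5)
There is a genuine gap, and it sits exactly where you locate the ``real content'': the construction of the primitives $p_i$ by evaluating a representative on the diagonals. In this paper everything lives in $L^\infty((S^1)^{n+1})$, i.e.\ in classes modulo $\mu_K$-null sets, and the cocycle identity $d\tilde c=0$ is only known \emph{almost everywhere}; the set of degenerate tuples $\{z_i=z_{i+1}\}$ is a null set, so you have no right to invoke the cocycle identity at the degenerated tuples $(z_0,\ldots,z_i,z_i,\ldots,z_n)$, which is precisely the step your ``mechanical check'' of $dp_i=c+\tau_i\cdot c$ needs. Lemma \ref{LemmaInvariantRepresentatives} does not repair this: it produces some strictly $G$-invariant function $\tilde c$ in the class of $c$, but the set of non-injective tuples is itself $G$-invariant and null, so one may alter $\tilde c$ there by an arbitrary bounded $G$-invariant function and obtain another admissible representative. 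Such an alteration changes your $p_i$ by an essentially arbitrary invariant cochain while leaving $c+\tau_i\cdot c$ untouched, so the identity $dp_i=c+\tau_i\cdot c$ cannot hold for every choice; to make your argument run you would need a representative satisfying the cocycle identity \emph{pointwise everywhere} (a strict invariant cocycle), which is a strictly stronger statement than Lemma \ref{LemmaInvariantRepresentatives} and is not established. A secondary, repairable but unaddressed point: your telescoping reduces $c-c_{\mathrm{alt}}$ to terms of the form $(1+\tau_i)(\sigma'\cdot c)$, and $\sigma'\cdot c$ is in general no longer a cocycle (individual permutations do not commute with $d$, only the full alternation does), so the ``basic claim'' cannot simply be applied to it as stated.

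For comparison, the paper does not argue this way at all: Lemma \ref{LemmaAlternatingCocycle} is quoted from Monod's book (Scholium 7.4.6), where the proof is functorial rather than combinatorial. The alternation operator is a norm-decreasing $G$-morphism of complexes extending the identity on the coefficients, and since the relevant complexes are strong resolutions by relatively injective modules, any two such extensions of the identity are chain homotopic; hence alternation induces the identity on $H^\bullet_{cb}$ and every class has an alternating representative. That route deliberately avoids diagonal evaluations, which is exactly the obstruction your explicit simplicial homotopy runs into in the measurable setting.
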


Hence for the proof of Theorem \ref{TheoremMain} we may without loss of generality assume that the cocycle~$c$ is alternating. The next lemma establishes a basic symmetry property for such cocycles.

\begin{lemma} \label{LemmaSymmetryForCocycle}
	Assume that the cocycle $c$ is alternating. Then
\[
c(z_{0},z_{1},z_{2},z_{3},z_{4}) = c\bigl( z_{0}^{-1},z_{1}^{-1},z_{2}^{-1},z_{3}^{-1},z_{4}^{-1} \bigr).
\]
\end{lemma}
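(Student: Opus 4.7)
The plan is to realize the simultaneous inversion $\iota\colon \vec z \mapsto (z_0^{-1}, z_1^{-1}, z_2^{-1}, z_3^{-1}, z_4^{-1})$ on $(S^1)^5$ (which on $S^1$ coincides with complex conjugation) as a composition of an element of $G$ acting diagonally with an even permutation of the five coordinates. Concretely, I aim to exhibit, for each $\vec z \in (S^1)^5$ with distinct entries, elements $g \in G$ and $\pi \in S_5$ with $\mathrm{sign}(\pi) = +1$ such that $g \vec z = \pi \iota \vec z$ (where $\pi$ acts on tuples by $\pi(w_0, \ldots, w_4) = (w_{\pi(0)}, \ldots, w_{\pi(4)})$). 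Granting this, $G$-invariance and the alternating property of $c$ immediately yield
\[
c(\vec z) \;=\; c(g \vec z) \;=\; c(\pi \iota \vec z) \;=\; \mathrm{sign}(\pi) \cdot c(\iota \vec z) \;=\; c(\iota \vec z),
\]
which extends to the claimed equality in $L^\infty$ by density of tuples with distinct entries.

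To construct $g$, I would use a classical orbit computation. The cross-ratio of any four distinct concyclic points in $\C$ is real, and hence fixed by $\iota$; moreover the $G$-orbit of an unordered $5$-subset of $S^1$ is determined by the cross-ratios of its $4$-subsubsets. Therefore the sets $\{z_0, \ldots, z_4\}$ and $\iota(\{z_0, \ldots, z_4\}) = \{z_0^{-1}, \ldots, z_4^{-1}\}$ lie in the same $G$-orbit, and choosing $g \in G$ sending the former onto the latter setwise, the tuples $g \vec z$ and $\iota \vec z$ share their underlying set and hence differ by a unique permutation $\pi \in S_5$.

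The main technical step is to verify $\mathrm{sign}(\pi) = +1$. The key input is that $g \in G$ preserves the cyclic orientation of $S^1$ whereas $\iota$ reverses it. Letting $\sigma \in S_5$ be any permutation sorting $\vec z$ into positive cyclic order, matching the cyclic orders of $g \vec z$ and $\pi \iota \vec z$ pins down $\pi$ up to a cyclic rotation: the identity forces $\pi = \sigma \circ \tau \circ s^k \circ \sigma^{-1}$ for some $k \in \Z/5$, where $\tau = (1\,4)(2\,3)$ is the reversal of a positively ordered $5$-tuple fixing its first entry and $s = (0\,1\,2\,3\,4)$ is the cyclic shift. Both $\tau$ (a product of two transpositions) and $s$ (a $5$-cycle) are even permutations, and conjugation preserves sign, so $\mathrm{sign}(\pi) = +1$. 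The main obstacle is this sign computation, which crucially uses the numerological fact that $5$ is odd; for cochains on $(S^1)^n$ with $n \in \{3,4\}$, the analogous calculation would give $\mathrm{sign}(\pi) = -1$, consistent with the well-known sign flip of the orientation cocycle under conjugation in degree $2$.
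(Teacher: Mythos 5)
Your reduction stands or falls with the claim that $\{z_0,\dots,z_4\}$ and $\{z_0^{-1},\dots,z_4^{-1}\}$ always lie in one $G$-orbit, and that claim is false. Cross-ratios of concyclic quadruples are real and invariant under the full M\"obius group of the disc, including the orientation-reversing inversion $\iota(z)=z^{-1}$; so they can only detect the orbit of a configuration under the group generated by $G$ and $\iota$, never its $G$-orbit. If some $g\in G$ satisfied $g\vec z=\pi\iota\vec z$ for some $\pi\in S_5$, then $\iota\circ g$ would be an orientation-reversing M\"obius symmetry of the five-point set $S=\{z_0,\dots,z_4\}$, and a generic $S$ has none. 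Concretely, transport to $\R P^1$ by the Cayley transform, under which $G$ becomes ${\rm PSL}_2(\R)$ and $\iota$ becomes $x\mapsto -x$: for $S=\{0,1,3,7,\infty\}$ there is no orientation-preserving M\"obius map carrying $S$ onto $-S$, since such a map would have to respect the cyclic orders, i.e. $S$ would have to admit an orientation-reversing M\"obius symmetry, necessarily acting as one of the five reflections of its cyclic order fixing one point; the one fixing $3$ would be $x\mapsto 9/x$, which sends $1$ to $9\notin S$, and the other four candidates fail similarly. Hence for almost every $\vec z$ (configurations with a reflection symmetry form a null set) there is no admissible $g$ at all, for any $\pi$, even or odd, and the chain $c(\vec z)=c(g\vec z)=\cdots$ never gets started. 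Your parity analysis (reversal of a cyclically ordered $5$-tuple is an even permutation) is correct in itself, but moot.

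This is also where your route genuinely diverges from the paper's. The paper never attempts to move the whole inverted configuration back onto a permutation of the original by a single group element: it uses the alternating hypothesis only to put $(z_0,z_1,z_2)$ in the positively oriented orbit, maps that triple to the reference triple $(1,-1,-i)$, encodes $z_3,z_4$ through the normalized cross-ratio, and then exploits that the (real) cross-ratio is unchanged under coordinatewise inversion. The orientation issue you correctly sensed is exactly the delicate point in any argument here, since $\iota$ reverses orientation while $G$ preserves it and has two orbits on distinct triples; in particular, as the example above shows, $G$-invariance and alternation applied to a single configuration cannot by themselves force the asserted identity when that configuration has no reflection symmetry, so a correct proof must make the three-point normalization (or further structure such as the cocycle identity) do real work. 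As it stands, your proposal has a fatal gap at the orbit-equivalence step, and the sign computation built on top of it cannot rescue it.
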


\begin{proof}
	Let us denote by
\[
\map{C}{\R}{S^{1}}, \quad x \mapsto \frac{x-i}{x+i}
\]
the Cayley transform. Recall that the cross-ratio
\[
(w_{0},w_{1};w_{2},w_{3}) = \frac{(w_{0}-w_{2})(w_{1}-w_{3})}{(w_{1}-w_{2})(w_{0}-w_{3})}
\]
is invariant under the action of $G$ on $S^{1}$ and is normalized in the sense that
\[
w = C\bigl( (1,-1;-i,w) \bigr).
\]
Since $c$ is alternating, we may without loss of generality assume that the triple $(z_{0}, z_{1}, z_{2})$ is positively oriented. Hence it follows by $3$-transitivity of the $G$-action on $S^{1}$ that for every point $(z_{0}, z_{1}, z_{2}, z) \in (S^{1})^{(4)}$ there exists a unique $g \in G$ such that
\[
g.(z_{0}, z_{1}, z_{2}, z) = \bigl( 1,-1,-i,C(z_{0},z_{1};z_{2},z) \bigr).
\]
Observe that the cross-ratio has the obvious symmetry
\[
(w_{1},w_{2};w_{3},w_{4}) = \bigl( w_{1}^{-1},w_{2}^{-1};w_{3}^{-1},w_{4}^{-1} \bigr).
\]
Let now $\tilde{c}$ be an invariant representative of the function $c$ as in Lemma \ref{LemmaInvariantRepresentatives}. Then the previous two identities yield
\[
\begin{split}
	\tilde{c}(z_{0},z_{1},z_{2},z_{3},z_{4}) &=  \tilde{c} \bigl( 1,-1,-i,C((z_{0},z_{1};z_{2},z_{3})),C((z_{0},z_{1};z_{2},z_{4})) \bigr) \\
	&=  \tilde{c} \bigl( 1,-1,-i,C\bigl(\bigl(z_{0}^{-1},z_{1}^{-1};z_{2}^{-1},z_{3}^{-1}\bigr)\bigr),C\bigl(\bigl(z_{0}^{-1},z_{1}^{-1};z_{2}^{-1},z_{4}^{-1}\bigr)\bigr) \bigr) \\
	&= \tilde{c} \bigl( z_{0}^{-1}, z_{1}^{-1}, z_{2}^{-1}, z_{3}^{-1},z_{4}^{-1} \bigr)
\end{split}
\]
by $G$-invariance of $\tilde{c}$.
\end{proof}
 
\begin{remark}
	More generally, a similar symmetry as in Lemma \ref{LemmaSymmetryForCocycle} holds for all bounded $G$-invariant measurable functions in $L^{\infty}((S^{1})^{n+1})^{G}$, for every $n\ge3$.
\end{remark}

With the above two lemmas at hand, we may now establish symmetries for the inhomogeneities appearing in the systems \eqref{EquationSystemForf} and \eqref{EquationSystemForf0}.

\begin{proposition}[Symmetries] \label{PropositionSymmetries}
	Assume that the cocycle $c$ is alternating. Then the inhomogeneities on the right-hand sides of systems \eqref{EquationSystemForf} and \eqref{EquationSystemForf0} have the following properties.
\begin{enumerate}[itemsep=1ex, leftmargin=1.5cm, itemindent=0cm, labelwidth=0cm, labelsep=0.2cm, label=(\roman{enumi})]
	\item The functions $c^{\sharp} + dv^{\sharp}$ and $c^{\flat} + dv^{\flat}$ are alternating.
	\item The function $F^{\sharp}_{c} = c^{\sharp}_{0} + (dv^{\sharp})_{0}$ is antisymmetric about the antidiagonal $\Deop$ in $\Om$. In particular, it vanishes along the antidiagonal.
	\item The function $F^{\flat}_{c} = c^{\flat}_{0} + (dv^{\flat})_{0}$ is symmetric about the antidiagonal $\Deop$ in $\Om$.
\end{enumerate}
\end{proposition}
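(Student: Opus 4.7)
My plan is to deduce the three symmetry properties from two inputs: (a) the alternating property of $c$ together with the inversion symmetry of Lemma \ref{LemmaSymmetryForCocycle} (which in angular coordinates reads $c(\theta_0,\ldots,\theta_4) = c(-\theta_0,\ldots,-\theta_4)$ modulo $2\pi$); and (b) a pair of functional equations for the auxiliary function $r$ built from Lemma \ref{LemmaKInvarianceOfcchek}. For (a), the changes of variables $\eta\mapsto-\eta$, $\varphi\mapsto-\varphi$ (and $\psi\mapsto-\psi$) in the defining integrals \eqref{EquationDefinitionOfcsharp}, \eqref{EquationDefinitionOfcflat}, and \eqref{EquationDefinitionOfccheck}, combined with the parity of $\cos$ and $\sin$, immediately yield that $c^\sharp$ and $c^\flat$ are alternating in their three arguments, that $c^\sharp$ is even and $c^\flat$ is odd under the simultaneous negation $(\theta_0,\theta_1,\theta_2)\mapsto(-\theta_0,-\theta_1,-\theta_2)$, and that $\check c$ is antisymmetric in its two arguments with $\check c(0,2\pi-\zeta) = -\check c(0,\zeta)$.

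The main technical step, and the one I expect to be most delicate, is to use the last identity to extract the following two functional equations for the solution $r$ in \eqref{EquationDefinitionOfr}:
\[
r(2\pi-\phi) = -e^{-i\phi}\, r(\phi) \qquad \text{and} \qquad \overline{r(\phi)} = r(2\pi-\phi).
\]
The first is obtained by substituting $\zeta\mapsto 2\pi-\zeta$ in the integral defining $r$ and noting that $1-e^{-i\phi} = -e^{-i\phi}(1-e^{i\phi})$; the second follows from the first together with the fact that $\check c$ is real-valued, so the integral in \eqref{EquationDefinitionOfr} is real. From these identities, inserted into the definition \eqref{EquationDefinitionOfv2} of $v$, one derives the antisymmetry $v(\theta_2,\theta_1) = -v(\theta_1,\theta_2)$ and the conjugation rule $v(-\theta_1,-\theta_2) = \overline{v(\theta_1,\theta_2)}$. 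Taking real and imaginary parts, $v^\sharp$ and $v^\flat$ are both antisymmetric, while $v^\sharp$ is even and $v^\flat$ is odd under simultaneous negation of the arguments.

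With these symmetries in hand, assertion (i) is immediate: antisymmetry of the $2$-cochains $v^\sharp$ and $v^\flat$ promotes their coboundaries $dv^\sharp$ and $dv^\flat$ to alternating $3$-cochains (a short direct check from the formula $df(\theta_0,\theta_1,\theta_2) = f(\theta_1,\theta_2) - f(\theta_0,\theta_2) + f(\theta_0,\theta_1)$), and the alternating property of $c^\sharp$ and $c^\flat$ then forces $c^\sharp+dv^\sharp$ and $c^\flat+dv^\flat$ to be alternating as well. For (ii) and (iii) I use that reflection about the antidiagonal $\Deop$ is the involution $(\phi_1,\phi_2)\mapsto(-\phi_2,-\phi_1)$ modulo $2\pi$. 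Combining the alternating property of $c^\sharp$ (to transpose the last two arguments of $c^\sharp(0,\phi_1,\phi_2)$) with its parity (to flip the signs) gives $c^\sharp_0(-\phi_2,-\phi_1) = -c^\sharp_0(\phi_1,\phi_2)$, and an analogous algebraic check using the antisymmetry and parity of $v^\sharp$ yields $(dv^\sharp)_0(-\phi_2,-\phi_1) = -(dv^\sharp)_0(\phi_1,\phi_2)$, proving (ii). For (iii) the identical computation goes through, except that the \emph{opposite} parity of $c^\flat$ and $v^\flat$ under negation reverses the sign at the final step, producing symmetry about $\Deop$ instead of antisymmetry.
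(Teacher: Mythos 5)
Your proposal is correct and follows essentially the same route as the paper: your two functional equations for $r$ are just a repackaging of the paper's integral identity \eqref{EquationSymmetryForIntegral}, and they yield exactly the paper's intermediate facts $v(\th_{2},\th_{1})=-v(\th_{1},\th_{2})$ and $v(-\th_{1},-\th_{2})=\overline{v(\th_{1},\th_{2})}$, while your treatment of $c^{\sharp}_{0}$ and $c^{\flat}_{0}$ via Lemma \ref{LemmaSymmetryForCocycle} and the change of variables $\eta\mapsto-\eta$, $\varphi\mapsto-\varphi$ matches \eqref{EquationSymmetriesForcsharp0}--\eqref{EquationSymmetriesForcflat0}. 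The only cosmetic difference is that you split into real/imaginary parity statements for $v^{\sharp},v^{\flat}$ where the paper works with the single complex identity $v(\th_{1},\th_{2})=-\overline{v(-\th_{2},-\th_{1})}$.
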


\begin{proof}
	We begin with the following observation. The function $\check{c}$ defined in \eqref{EquationDefinitionOfccheck} is alternating since $c$ is assumed to be alternating. By Lemma \ref{LemmaKInvarianceOfcchek}, the function $\check{c}$ is $K$-invariant. Hence
\[
\check c(0,\zeta) = \check c(-\zeta,0) = -\check c(0,-\zeta).
\]
By Convention \ref{ConventionAngularCoordinates}, substituting $\zeta$ by $2\pi-\zeta$ we infer from this that
\begin{equation} \label{EquationSymmetryForIntegral}
	\int_{\pi}^{\phi} \frac{\check{c}(0,\zeta)}{1-\cos(\zeta)} \, d\zeta = - \int_{\pi}^{2\pi-\phi} \frac{\check{c}(0,-\zeta)}{1-\cos(-\zeta)} \, d\zeta = \int_{\pi}^{-\phi} \frac{\check{c}(0,\zeta)}{1-\cos(\zeta)} \, d\zeta
\end{equation}
for every $\phi \in (0,2\pi)$. Recall moreover from Section \ref{SubSectionCauchyInitialValueProblem} that $(v^{\sharp}, v^{\flat}) \deq ({\rm{Re}}(v), {\rm{Im}}(v))$, where
\[
v(\th_{1}, \th_{2}) = e^{i\th_{1}} \, r(\th_{2} - \th_{1})
\]
and $r$ is as in \eqref{EquationDefinitionOfr}. Let us prove (i). Since $c$ is alternating, it is immediate from \eqref{EquationDefinitionOfcsharp} and \eqref{EquationDefinitionOfcflat} that $c^{\sharp}$ and $c^{\flat}$ are alternating. By \eqref{EquationDefinitionOfr} and \eqref{EquationSymmetryForIntegral} we have
\begin{eqnarray} \label{EquationAlternatingv}
\begin{aligned}
	v(\th_{1},\th_{2}) &= e^{i\th_{1}} \, r(\th_{2}-\th_{1}) \\
	&= - \frac{1}{2} \, \bigl(e^{i\th_{1}}-e^{i\th_{2}} \bigr) \cdot \int_{\pi}^{\th_{2}-\th_{1}} \frac{\check{c}(0,\zeta)}{1-\cos(\zeta)} \, d\zeta \\
	&= \frac{1}{2} \, \bigl( e^{i\th_{2}}-e^{i\th_{1}} \bigr) \cdot \int_{\pi}^{\th_{1}-\th_{2}} \frac{\check{c}(0,\zeta)}{1-\cos(\zeta)} \, d\zeta = -v(\th_{2}, \th_{1}).
\end{aligned}
\end{eqnarray}
It follows that $dv$, and hence $dv^{\sharp}$ and $dv^{\flat}$ are alternating. This proves (i). For the proof of (ii) and (iii) we have to show that
\begin{equation} \label{EquationSymmetriesForFProof}
	F^{\sharp}_{c}(\phi_{1}, \phi_{2}) = -F^{\sharp}_{c}(-\phi_{2}, -\phi_{1}), \quad F^{\flat}_{c}(\phi_{1}, \phi_{2}) = F^{\flat}_{c}(-\phi_{2}, -\phi_{1}).
\end{equation}
To this end, we first note that by \eqref{EquationDefinitionOfc_0} and Lemma \ref{LemmaSymmetryForCocycle} we have
\begin{eqnarray} \label{EquationSymmetriesForcsharp0}
\begin{aligned}
	c^{\sharp}_{0}(-\phi_{2}, -\phi_{1}) &= \fint \fint \cos(\varphi) \, c(\eta, \varphi, 0, -\phi_{2}, -\phi_{1}) \, d\eta \, d\varphi \\
	&= \fint \fint \cos(\varphi) \, c(-\eta, -\varphi, 0, \phi_{2}, \phi_{1}) \, d\eta \, d\varphi \\
	&= \fint \fint \cos(-\varphi) \, c(\eta, \varphi, 0, \phi_{2}, \phi_{1}) \, d\eta \,d\varphi \\
	&= -\fint \fint \cos(\varphi) \, c(\eta, \varphi, 0, \phi_{1}, \phi_{2}) \, d\eta \, d\varphi = -c^{\sharp}_{0}(\phi_{1}, \phi_{2}),
\end{aligned}
\end{eqnarray}
and similarly
\begin{eqnarray} \label{EquationSymmetriesForcflat0}
\begin{aligned}
	c^{\flat}_{0}(-\phi_{2}, -\phi_{1}) &= \fint \fint \sin(\varphi) \, c(\eta, \varphi, 0, -\phi_{2}, -\phi_{1}) \, d\eta \, d\varphi \\
	&= \fint \fint \sin(-\varphi) \, c(\eta, \varphi, 0, \phi_{2}, \phi_{1}) \, d\eta \,d\varphi \\
	&= \fint \fint \sin(\varphi) \, c(\eta, \varphi, 0, \phi_{1}, \phi_{2}) \, d\eta \, d\varphi = c^{\flat}_{0}(\phi_{1}, \phi_{2}).
\end{aligned}
\end{eqnarray}
Applying \eqref{EquationSymmetryForIntegral} as in the proof of (i) above, we obtain
\[
\begin{split}
	v(-\th_{1},-\th_{2}) &= - \frac{1}{2} \, \bigl(e^{-i\th_{1}} - e^{-i\th_{2}} \bigr) \cdot \int_{\pi}^{-\th_{2}+\th_{1}} \frac{\check{c}(0,\zeta)}{1-\cos(\zeta)} \, d\zeta \\
	&= - \frac{1}{2} \, \bigl(e^{-i\th_{1}} - e^{-i\th_{2}} \bigr) \cdot \int_{\pi}^{\th_{2}-\th_{1}} \frac{\check{c}(0,\zeta)}{1-\cos(\zeta)} \, d\zeta = \overline{v(\th_{1},\th_{2})}.
\end{split}
\]
Combining this with \eqref{EquationAlternatingv} we arrive at
\[
v(\th_{1},\th_{2}) = - \overline{v(-\th_{2},-\th_{1})}.
\]
Consider the function $(dv)_{0}(\phi_{1},\phi_{2}) \deq dv(0,\phi_{1},\phi_{2})$. The previous two identities imply that
\[
\begin{split}
	(dv)_{0}(\phi_{1}, \phi_{2}) &= v(\phi_{1}, \phi_{2}) - v(0,\phi_{2}) + v(0,\phi_{1}) \\
	&= - \overline{\bigl( v(-\phi_{2}, -\phi_{1}) - v(0,-\phi_{1}) + v(0,-\phi_{2}) \bigr)} = - \overline{(dv)_{0}(-\phi_{2}, -\phi_{1})}.
\end{split}
\]
Recall from \eqref{EquationDefinitionOfdv_0} that $(dv^{\sharp})_{0}$ and $(dv^{\flat})_{0}$ are the real and imaginary parts of $(dv)_{0}$. Hence we conclude that
\begin{equation} \label{EquationSymmetriesFordv0}
	(dv^{\sharp})_{0}(\phi_{1}, \phi_{2}) = -(dv^{\sharp})_{0}(-\phi_{2}, -\phi_{1}), \quad (dv^{\flat})_{0}(\phi_{1}, \phi_{2}) = (dv^{\flat})_{0}(-\phi_{2}, -\phi_{1}).
\end{equation}
The identities \eqref{EquationSymmetriesForFProof} now follow from \eqref{EquationSymmetriesForcsharp0}, \eqref{EquationSymmetriesForcflat0} and \eqref{EquationSymmetriesFordv0}, which proves (ii) and (iii).
\end{proof}

Next we consider symmetries of the solutions of system \eqref{EquationSystemForf0}. We introduce some notation first. The~$\mathfrak{S}_{3}$-action on $(S^1)^{(3)}$ commutes with the $K$-action, whence it descends to an action on the domain $\Om$. To describe this action explicitly, we denote by $s_{1}$ and~$s_{2}$ the Coxeter generators of~$\mathfrak{S}_{3}$ that act on $(S^1)^{(3)}$ by swapping coordinates in the pairs $(\th_{0}, \th_{1})$ and $(\th_{1}, \th_{2})$, respectively. Then, with respect to the coordinates $(\phi_{1},\phi_{2})$ on $\Om$, the actions of $s_{1}$ and $s_{2}$ are given by
\[
s_{1}.(\phi_{1}, \phi_{2}) = (-\phi_{1}, \phi_{2}-\phi_{1}), \quad s_{2}.(\phi_{1}, \phi_{2}) = (\phi_{2}, \phi_{1}).
\]
A function $h_{0} \in C^{\infty}(\Omega)$ will be called \emph{alternating under the action of $\mathfrak{S}_{3}$} if $s.h_{0} = (-1)^{s} \, h_{0}$ for all $s \in \mathfrak{S}_{3}$. Thus a function $h_{0} \in C^{\infty}(\Om)$ is alternating under the action of $\mathfrak{S}_{3}$ if and only if the function $h \in C^\infty((S^1)^{(3)})^{K}$ defined by
\[
h(\th_{0}, \th_{1}, \th_{2}) = h_{0}(\th_{1}-\th_{0}, \th_{2}-\th_{0})
\]
is alternating in the usual sense.

\begin{proposition}[Alternating solutions] \label{PropositionAlternatingSolution}
	Assume that the cocycle $c$ is alternating. A solution $f_{0} \in C^{\infty}(\Om)$ of system \eqref{EquationSystemForf0} is alternating under the action of $\mathfrak{S}_{3}$ if and only if
\begin{equation} \label{EquationAlternatingInitialValues}
	f_{0}(\omega_{+}) = - f_{0}(\omega_{-}).
\end{equation}
In this case the primitive $P_{c}(f) \in \Pca(c)^{G}$, where $f$ is defined by \eqref{EquationCorrespondencefAndf0}, is alternating.
\end{proposition}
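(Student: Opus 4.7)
The plan is to work at the level of the lift $f \in C^\infty((S^1)^{(3)})^K$ of $f_0$ determined by \eqref{EquationCorrespondencefAndf0}; by the correspondence recalled immediately before the proposition, $f_0$ is alternating under $\mathfrak{S}_3$ if and only if $f$ is alternating in the usual sense. Necessity is immediate from $s_2.\omega_+ = \omega_-$: alternating-ness of $f_0$ forces $f_0(\omega_-) = -f_0(\omega_+)$.

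For sufficiency I will invoke the uniqueness part of Proposition \ref{PropositionExplicitPrimitives}(iii) applied to the transposition-twisted functions. The first ingredient is that the operators $L_A^{(3)} = \sum_j \sin(\theta_j)\partial_{\theta_j}$ and $L_N^{(3)} = \sum_j (1-\cos(\theta_j))\partial_{\theta_j}$ are manifestly symmetric in their arguments, whence
\[
L^{(3)}(\sigma.f) = \sigma.(L^{(3)} f) \quad \text{for every } \sigma \in \mathfrak{S}_3.
\]
Combined with Proposition \ref{PropositionSymmetries}(i), which asserts that the inhomogeneities $c^\sharp + dv^\sharp$ and $c^\flat + dv^\flat$ are alternating, this shows that $\sigma.f$ solves system \eqref{EquationSystemForf} with inhomogeneities multiplied by $(-1)^\sigma$. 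For any transposition $\tau \in \mathfrak{S}_3$ it follows that $-\tau.f$ solves the original system.

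To check that $-\tau.f$ agrees with $f$ at the base points, I lift $\omega_+,\omega_-$ to the points $P_+ = (0, 2\pi/3, 4\pi/3)$ and $P_- = (0, 4\pi/3, 2\pi/3)$ in angular coordinates on $(S^1)^{(3)}$. A short calculation using only $K$-invariance of $f$ shows that $\tau.P_\pm$ lies in the $K$-orbit of $P_\mp$ for each generator $\tau = s_1, s_2$; for instance, $s_1.P_+ = (2\pi/3, 0, 4\pi/3)$ becomes $P_-$ after $K$-translation by $-2\pi/3$ modulo $2\pi$. Hence $(\tau.f)(P_\pm) = f(P_\mp) = f_0(\omega_\mp)$, and under the hypothesis $f_0(\omega_+) = -f_0(\omega_-)$ this equals $-f(P_\pm)$. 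Proposition \ref{PropositionExplicitPrimitives}(iii) then yields $\tau.f = -f$ for both generators, so $f$ is alternating since $s_1, s_2$ generate $\mathfrak{S}_3$.

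For the final assertion, $G$-invariance of $P_c(f) = I(c) + df$ follows from Proposition \ref{PropositionPDEForf} because $f$ solves \eqref{EquationSystemForf}, and alternating-ness is routine: $I(c)$ is alternating in its four arguments because the integrand $c(z, z_0, \ldots, z_3)$ is alternating in $(z_0, \ldots, z_3)$ by Lemma \ref{LemmaAlternatingCocycle}, while $df$ is alternating because the homogeneous differential preserves the subspace of alternating cochains. The only step requiring genuine care is the $K$-identification $\tau.P_\pm \sim_K P_\mp$ for the generator $\tau = s_1$ whose action mixes $\theta_0$ with $\theta_1$; all other ingredients are direct applications of the symmetries from Proposition \ref{PropositionSymmetries}(i) and the uniqueness statement of Proposition \ref{PropositionExplicitPrimitives}(iii).
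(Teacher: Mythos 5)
Your proof is correct and takes essentially the same route as the paper: both arguments rest on the symmetry of the operators $L_{A}^{(3)}$, $L_{N}^{(3)}$, the alternating inhomogeneities from Proposition \ref{PropositionSymmetries}\,(i), the identities $s_{i}.\omega_{\pm}=\omega_{\mp}$, and the uniqueness statement of Proposition \ref{PropositionExplicitPrimitives}\,(iii); the paper antisymmetrizes $f$ over $\mathfrak{S}_{3}$ and identifies the result with $f_{0}$, whereas you verify $\tau.f=-f$ generator by generator, which is the same argument in a slightly different packaging. One small nit: the alternating property of $I(c)$ follows from the standing assumption that $c$ is alternating, not from Lemma \ref{LemmaAlternatingCocycle}.
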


\begin{proof}
	First of all, we observe that $\mathfrak{S}_{3}$ acts on the base points $\{\omega_{+},\omega_{-}\}$ by
\begin{equation} \label{EquationActionOfS3Onomega}
	s_{1}.\omega_{\pm}=\omega_{\mp}, \quad s_{2}.\omega_{\pm}=\omega_{\mp}.
\end{equation}
Hence, if $f_{0}$ is alternating under the action of $\mathfrak{S}_{3}$ it follows that
\[
f_{0}(\omega_{+}) = (-1)^{s_{1}} \, f_{0}(s_{1}.\omega_{+}) = - f_{0}(\omega_{-}).
\]
Conversely, let $f_{0}$ be a solution of system \eqref{EquationSystemForf0} that satisfies \eqref{EquationAlternatingInitialValues}. We will prove that $f_{0}$ coincides with its antisymmetrization under the action of $\mathfrak{S}_{3}$. By Proposition \ref{PropositionExplicitPrimitives}\,(ii), the function $f_{0}$ corresponds to a solution $f \in C^{\infty}((S^{1})^{(3)})^{K}$ of system \eqref{EquationSystemForf} via
\[
f(\th_{0}, \th_{1}, \th_{2}) = f_{0}(\th_{1}-\th_{0}, \th_{2}-\th_{0}).
\]
Let now
\[
\hat{f} \deq \frac{1}{6} \cdot \sum_{s \in \mathfrak{S}_{3}} (-1)^{s} \, s.f
\]
be the antisymmetrization of $f$. Then $\hat{f} \in C^{\infty}((S^{1})^{(3)})^{K}$, and we further claim that $\hat{f}$ solves system \eqref{EquationSystemForf} as well. To see this, observe that in system \eqref{EquationSystemForf} the operators $L_{A}^{(3)}$ and $L_{N}^{(3)}$ are symmetric, while by Proposition~\ref{PropositionSymmetries}\,(i) the inhomogeneities $c^{\sharp} + dv^{\sharp}$ and $c^{\flat} + dv^{\flat}$ are alternating. Now by $K$-invariance, the function $\hat{f}$ gives rise to a function $\hat{f}_{0} \in C^{\infty}(\Om)$ via
\[
\hat{f}(\th_{0}, \th_{1}, \th_{2}) = \hat{f}_{0}(\th_{1}-\th_{0}, \th_{2}-\th_{0}).
\]
Then Proposition \ref{PropositionExplicitPrimitives}\,(ii) implies that $\hat{f}_{0}$ solves system \eqref{EquationSystemForf0}. Moreover, we have
\[
\hat{f}_{0} = \frac{1}{6} \cdot \sum_{s \in \mathfrak{S}_{3}} (-1)^{s} \, s.f_{0},
\]
whence $\hat{f}_{0}$ is alternating under the action of $\mathfrak{S}_{3}$. It follows from \eqref{EquationAlternatingInitialValues} and \eqref{EquationActionOfS3Onomega} that $\hat{f}_{0}(\omega_{\pm}) = f_{0}(\omega_{\pm})$. The uniqueness statement in Proposition \ref{PropositionExplicitPrimitives}\,(iii) implies that $\hat{f}_{0}$ coincides with $f_{0}$.
\end{proof}

The proposition shows that solutions $f_{0}$ of system \eqref{EquationSystemForf0} that are alternating under the action of $\mathfrak{S}_{3}$ form a $1$-parameter family.

\subsection{Boundedness}
\label{SubSectionBoundedness}

In order to complete the proof of Theorem \ref{TheoremMain} it remains to show that among the $G$-invariant primitives we constructed in Section \ref{SectionConstructionOfPrimitives}, there actually exist bounded ones. This is the content of the next proposition, which crucially relies on the symmetries unveiled in the previous subsection.

\begin{proposition}[Boundedness] \label{PropositionBoundednessOfPrimitives}
	Assume that the cocycle $c$ is alternating. Let $f_{0} \in C^{\infty}(\Om)$ be a solution of system \eqref{EquationSystemForf0} that is alternating under the action of $\mathfrak{S}_{3}$. Then the corresponding primitive $P_{c}(f) \in \Pca(c)^{G}$, where~$f$ is defined by \eqref{EquationCorrespondencefAndf0}, is bounded.
\end{proposition}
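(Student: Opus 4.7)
The goal is to establish boundedness of $P_{c}(f) = I(c) + df$. Since $\|I(c)\|_\infty \leq \|c\|_\infty$, this reduces to bounding $df$; and because $df$ is a combination of four evaluations of $f_0$ through the identity $f(\th_0, \th_1, \th_2) = f_0(\th_1 - \th_0, \th_2 - \th_0)$, it suffices to show that $f_0$ is bounded on $\Om$.

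\smallskip

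\textbf{Step 1 (collapse of the $A$-integral).} The base points $\omega_+ = (2\pi/3, 4\pi/3)$ and $\omega_- = (4\pi/3, 2\pi/3)$ both lie on the antidiagonal $\Deop$, as $2\pi/3 + 4\pi/3 = 2\pi$. Passing to the ${\rm SL}_2(\R)$-realisation in which $A$ consists of real matrices, the $A$-action commutes with complex conjugation and therefore preserves $\Deop$. By Proposition \ref{PropositionSymmetries}\,(ii), the function $F^\sharp_c$ vanishes identically on $\Deop$, whence
\[
\int_0^{S(\Phi)} F^\sharp_c(a_s.\omega_\pm) \, ds \equiv 0.
\]
The explicit formula of Proposition \ref{PropositionExplicitPrimitives}\,(iii) collapses to
\[
f_0(\phi_1, \phi_2) = f_0(\omega_\pm) + \int_0^{T(\phi_1, \phi_2)} F^\flat_c\bigl( n_t.\Phi(\phi_1, \phi_2),\, n_t.(2\pi - \Phi(\phi_1, \phi_2)) \bigr) \, dt.
\]

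\smallskip

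\textbf{Step 2 (reduction to a fundamental domain).} By the $\mathfrak{S}_3$-alternating hypothesis together with Proposition \ref{PropositionAlternatingSolution}, the function $f_0$ is determined on $\Om$ by its restriction to any fundamental domain $\mathcal{F}$ of the $\mathfrak{S}_3$-action. I would choose $\mathcal{F}$ to be the ``pie slice'' in $\Om_+$ bounded by the two segments from $\omega_+$ to the corners $(0,0)$ and $(2\pi,2\pi)$, so that the only collision wall touched by $\mathcal{F}$ is the diagonal $\phi_1 = \phi_2$. This is advantageous because $T(\phi, \phi) = -\cot(\phi/2)$ stays finite for $\phi \in (0, 2\pi)$, whereas $T$ blows up along the other two walls $\phi_1 = 0$ and $\phi_2 = 2\pi$, which now lie outside $\mathcal{F}$.

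\smallskip

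\textbf{Step 3 (estimating the $N$-orbit integral on $\mathcal{F}$).} On every relatively compact subset of $\mathcal{F}$ the parameter $T$ is uniformly bounded, and $F^\flat_c$ is bounded by $\|c\|_\infty + 2\|v\|_\infty \leq 3 \|c\|_\infty$ using Lemma \ref{LemmaBoundednessOfrAndw}, giving an immediate uniform estimate for $f_0$. The delicate regions are neighborhoods of the two corners $(0,0)$ and $(2\pi,2\pi)$ of $\mathcal{F}$, where $T$ becomes unbounded. Here I would exploit simultaneously the symmetry of $F^\flat_c$ about $\Deop$ provided by Proposition \ref{PropositionSymmetries}\,(iii) and the vanishing of the prefactor $e^{i\th_1} - e^{i\th_2}$ in the explicit formula \eqref{EquationAlternatingv} for $v$ along the diagonal, to pair up long $N$-orbit segments on either side of $\Deop$ into cancelling contributions.

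\smallskip

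\textbf{Main obstacle and conclusion.} Step 3 is the principal technical hurdle: an $L^\infty$ bound on $F^\flat_c$ alone does not control the $N$-orbit integral near the corners of $\mathcal{F}$, where the integration range is unbounded. The crux is to extract the requisite cancellation from the reflection symmetry about $\Deop$ combined with the alternating structure and the explicit form of $r$. Once boundedness of $f_0$ on $\mathcal{F}$ is established, the $\mathfrak{S}_3$-alternating extension yields $\|f_0\|_\infty < \infty$ on all of $\Om$, so that $\|df\|_\infty \leq 4\|f_0\|_\infty$ and finally $\|P_c(f)\|_\infty \leq \|c\|_\infty + 4\|f_0\|_\infty < \infty$.
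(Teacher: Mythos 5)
Your Steps 1 and 2 are sound (Step 1 is in substance Lemma \ref{LemmaConstantAlongAntidiagonal}, and the passage to a fundamental domain via the $\mathfrak{S}_3$-alternating property is legitimate), but the proof has a genuine gap exactly where you flag the ``principal technical hurdle''. Your opening reduction --- ``it suffices to show that $f_{0}$ is bounded on $\Om$'' --- commits you to a statement that is strictly stronger than what the proposition needs and that neither you nor the paper establishes: near the corners $(0,0)$ and $(2\pi,2\pi)$ of your pie-slice (and near the edges $\phi_{1}=0$, $\phi_{2}=2\pi$ of the square) the time parameter $T(\phi_{1},\phi_{2})=-\tfrac12(\cot(\phi_{1}/2)+\cot(\phi_{2}/2))$ blows up, an $L^\infty$ bound on $F^{\flat}_{c}$ gives nothing, and the hoped-for cancellation from the reflection symmetry of $F^{\flat}_{c}$ about $\Deop$ is only announced, never exhibited. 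As it stands, the argument does not go through, and it is not even clear that global boundedness of $f_{0}$ on $\Om$ is true.

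The paper's proof circumvents this entirely by a reduction you are missing (Lemma \ref{LemmaBoundednessAlongLine}): since $P_{c}(f)=I(c)+df$ is already known to be $G$-invariant and the boundary action is $3$-transitive, boundedness of $P_{c}(f)$ is equivalent to boundedness of the single function $z\mapsto df(1,e^{2\pi i/3},e^{4\pi i/3},z)$, hence to boundedness of $f_{0}$ along only the two vertical segments $\xi\mapsto(2\pi/3,\xi)$ and $\xi\mapsto(4\pi/3,\xi)$ --- no global control of $f_{0}$ is required. One then uses the $\mathfrak{S}_3$-alternating hypothesis in the opposite direction from yours: the images of these two segments in a suitably chosen fundamental domain lie in a \emph{compact} subset of $(0,2\pi)^{2}$ (their problematic ends near the edges of the square are moved next to the diagonal), so the dangerous corner regions never enter. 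There, boundedness follows from Lemma \ref{LemmaConstantAlongAntidiagonal} (your Step 1) together with Lemma \ref{LemmaBoundednessNearDiagonal}, i.e.\ the continuity of $T$ on the open square. To repair your proposal you would need either to supply this reduction (or an equivalent one) or to actually prove the corner cancellation you postulate; the latter is not supported by the symmetries you cite. (Minor point: $\|(dv^{\flat})_{0}\|_\infty\le 3\|v\|_\infty$, not $2\|v\|_\infty$, so your constant should be $4\|c\|_\infty$; this is immaterial to the main issue.)
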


The proof of the proposition relies on the following three basic observations.

\begin{lemma} \label{LemmaBoundednessAlongLine}
	Let the function $f_{0}$ be defined by formula \eqref{EquationFormulaForf0}. If $f_{0}$ is bounded along the line segments
\[
(0,2\pi/3) \cup (2\pi/3,2\pi) \ni \xi \mapsto (2\pi/3,\xi)
\]
and
\[
(0,4\pi/3) \cup (4\pi/3,2\pi) \ni \xi \mapsto (4\pi/3,\xi),
\]
and $f$ is given by formula \eqref{EquationCorrespondencefAndf0}, then the corresponding primitive $P_{c}(f) \in \Pca(c)^{G}$ is bounded.
\end{lemma}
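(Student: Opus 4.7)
The plan is to exploit $G$-invariance of $P_c(f)$ together with strict $3$-transitivity of the $G$-action on $S^{1}$ in order to collapse the problem to a single angular variable. Since $I(c) \in L^{\infty}((S^{1})^{4})$ is automatic from $c \in L^{\infty}((S^{1})^{5})$, the lemma's entire content is to show that $df$ is bounded. By $3$-transitivity, every element of $(S^{1})^{(4)}$ is $G$-equivalent to one of the two normal forms
\[
(1,\, e^{2\pi i/3},\, e^{4\pi i/3},\, z) \quad \text{or} \quad (1,\, e^{4\pi i/3},\, e^{2\pi i/3},\, z),
\]
where $z = e^{i\theta}$ varies in $S^{1} \setminus \{1, e^{2\pi i/3}, e^{4\pi i/3}\}$, the choice being dictated by the orientation of the initial triple $(z_0, z_1, z_2)$. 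By $G$-invariance of $P_c(f)$, it therefore suffices to bound $P_c(f)$ along these two one-parameter cross-sections.

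For the first normal form I would expand
\[
df(1, e^{2\pi i/3}, e^{4\pi i/3}, z) = f(e^{2\pi i/3}, e^{4\pi i/3}, z) - f(1, e^{4\pi i/3}, z) + f(1, e^{2\pi i/3}, z) - f(1, e^{2\pi i/3}, e^{4\pi i/3})
\]
and apply $K$-invariance of $f$ via the correspondence \eqref{EquationCorrespondencefAndf0} to rewrite each term as a value of $f_0$. Adopting Convention \ref{ConventionAngularCoordinates}, the four summands become
\[
f_0(2\pi/3,\, \theta - 2\pi/3), \quad f_0(4\pi/3,\, \theta), \quad f_0(2\pi/3,\, \theta), \quad f_0(\omega_+),
\]
respectively. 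The first, third, and fourth values sit on the line segment $\phi_1 = 2\pi/3$ (noting that $\omega_+ = (2\pi/3, 4\pi/3)$ lies on it), while the second sits on $\phi_1 = 4\pi/3$. By hypothesis each is bounded uniformly in $\theta$, so $df$ is bounded along the first cross-section. The identical computation for the second normal form, with $\omega_-$ in place of $\omega_+$, produces four values again on the same two lines, so $df$ is bounded there as well. Combined with boundedness of $I(c)$, this gives $P_c(f)$ bounded on both cross-sections, and hence everywhere on $(S^{1})^{(4)}$ by $G$-invariance.

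No genuine technical obstacle arises; the computation is essentially bookkeeping. The one conceptual point is the initial reduction: to recognise that $3$-transitivity collapses the four-variable problem to a single angular parameter, and that the two normal forms above are precisely those making every $K$-normalised term in $df$ fall on one of the two prescribed line segments $\phi_1 \in \{2\pi/3, 4\pi/3\}$. This is also the structural reason why the line segments in the statement are placed exactly at these values of $\phi_1$ and pass through the base points $\omega_\pm$.
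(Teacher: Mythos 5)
Your proof is correct and follows essentially the same route as the paper: reduce via $G$-invariance and $3$-transitivity to the cross-section $(1,e^{2\pi i/3},e^{4\pi i/3},z)$, expand $df$ there, rewrite the terms through $K$-invariance as values of $f_{0}$ on the two line segments $\phi_{1}\in\{2\pi/3,4\pi/3\}$, and absorb $I(c)$ by its boundedness. The only difference is that you also treat the second, negatively oriented normal form $(1,e^{4\pi i/3},e^{2\pi i/3},z)$ explicitly (the paper works with a single cross-section), which is a slightly more careful bookkeeping of the two open $G$-orbits of triples but not a different argument.
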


\begin{proof}
	By Proposition \ref{PropositionExplicitPrimitives}, $P_{c}(f) = I(c) + df$ is $G$-invariant. By $3$-transitivity of the $G$-action on~$S^{1}$ and since $I(c)$ is bounded, we therefore deduce that $P_{c}(f)$ is bounded if and only if the function
\[
z \mapsto df \bigl( 1, e^{2\pi i/3}, e^{4\pi i/3}, z \bigr)
\]
is bounded. Writing $z=e^{i\xi}$, we may express this function as
\[
\begin{split}
	\xi \mapsto & \,\, f(2\pi/3, 4\pi/3, \xi) - f(0, 4\pi/3, \xi) + f(0, 2\pi/3, \xi) - f(0, 2\pi/3, 4\pi/3) \\
	&= f_{0}(2\pi/3,\xi-2\pi/3) - f_{0}(4\pi/3,\xi) + f_{0}(2\pi/3,\xi) - f_{0}(2\pi/3,4\pi/3).
\end{split}
\]
The lemma follows.
\end{proof}

\begin{lemma} \label{LemmaBoundednessNearDiagonal}
	Let $C$ be a compact subset of the open square $(0, 2\pi)^2$. If the function $f_{0}$ defined by formula \eqref{EquationFormulaForf0} is bounded along the antidiagonal $\Deop$ in $\Om$, then it is bounded on the subset $C \cap \Om$ of $\Om$.
\end{lemma}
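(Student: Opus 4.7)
The plan is to invoke formula \eqref{EquationFormulaForf0} directly. For any $(\phi_1,\phi_2) \in C \cap \Om$, the first two terms on the right-hand side of \eqref{EquationFormulaForf0} together equal $f_1(\Phi(\phi_1,\phi_2), 2\pi - \Phi(\phi_1,\phi_2))$, which is the value of $f_0$ along the antidiagonal and hence bounded by hypothesis (uniformly in $(\phi_1,\phi_2)$, since the boundedness assumption on $\Deop$ is global). The problem thus reduces to bounding the $N$-characteristic integral
\[
\int_0^{T(\phi_1,\phi_2)} F^\flat_c\bigl(n_t.\Phi(\phi_1,\phi_2),\, n_t.(2\pi - \Phi(\phi_1,\phi_2))\bigr) \, dt
\]
uniformly over $C \cap \Om$.

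For this I would first note that the integrand is uniformly bounded on all of $\Om$. Indeed, $|c^\flat_0| \le \|c\|_\infty$ is immediate from \eqref{EquationDefinitionOfcflat}, while $v^\flat = {\rm Im}(v)$ is bounded because $v(\th_1,\th_2) = e^{i\th_1} r(\th_2 - \th_1)$ and $|r| \le \|c\|_\infty$ by Lemma \ref{LemmaBoundednessOfrAndw}; boundedness of $v^\flat$ immediately yields boundedness of $dv^\flat$ and hence of $(dv^\flat)_0$. Consequently $|F^\flat_c| \le M$ for some constant $M$ depending only on $\|c\|_\infty$, and the integral is controlled by $M \cdot |T(\phi_1,\phi_2)|$.

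The remaining task is to bound $|T|$ on $C \cap \Om$, which is where compactness of $C$ in the open square enters. By the explicit formula \eqref{EquationDefinitionOfT},
\[
T(\phi_1,\phi_2) = -\tfrac{1}{2}\bigl(\cot(\phi_1/2) + \cot(\phi_2/2)\bigr),
\]
and since $C$ is compact in $(0, 2\pi)^2$, there exists $\epsilon > 0$ with $\phi_1,\phi_2 \in [\epsilon, 2\pi-\epsilon]$ for every $(\phi_1,\phi_2) \in C$. On this range $|\cot(\phi_i/2)| \le \cot(\epsilon/2)$, so $|T| \le \cot(\epsilon/2)$ uniformly on $C$. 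Combining with the integrand bound yields the claim.

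There is no real obstacle here, and the argument highlights why the hypothesis is stated with $C$ compactly contained in the open square rather than in $\overline{\Om}$: the integral could only blow up if $T$ tended to infinity, which would require one of the $\phi_i$ to approach $0$ or $2\pi$ (i.e.\ the boundary of the square), whereas $T$ extends continuously across the diagonal with $T(\phi,\phi) = -\cot(\phi/2)$. Thus compactness inside the open square is exactly the right condition to tame the $N$-characteristic flight time.
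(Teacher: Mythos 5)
Your proposal is correct and follows essentially the same route as the paper: bound the antidiagonal contribution by the hypothesis, bound the integrand $F^{\flat}_{c}$ via Lemma \ref{LemmaBoundednessOfrAndw}, and control $|T(\phi_{1},\phi_{2})|$ on $C\cap\Om$ using compactness of $C$ in the open square together with formula \eqref{EquationDefinitionOfT}. The only cosmetic difference is that you give the explicit estimate $|T|\le\cot(\epsilon/2)$, whereas the paper simply notes that $T$ extends continuously to $(0,2\pi)^{2}$ and hence is bounded on $C$.
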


\begin{proof}
	By Lemma \ref{LemmaBoundednessOfrAndw} the function $F^{\flat}_{c} = c^{\flat}_{0} + (dv^{\flat})_{0}$ is bounded. Moreover, by assumption we have $|f_{0}|_{\Deop}| \le M$ for some constant $M>0$. Hence we obtain from formula \eqref{EquationFormulaForf0} the estimate
\[
\bigl| f_{0}(\phi_{1}, \phi_{2}) \bigr| \leq M + \bigl\| F^{\flat}_{c} \bigr\|_{\infty} \cdot \bigl| T(\phi_{1}, \phi_{2}) \bigr|
\]
for all $(\phi_{1},\phi_{2}) \in \Om$. It remains to show that the function $T$ is bounded on $C \cap \Om$. By compactness of $C$ it will be enough to prove that the function $\map{T}{\Om}{\R}$ extends to a continuous function on the open square $(0,2\pi)^2$. This, however, is immediate from formula \eqref{EquationDefinitionOfT}.
\end{proof}

\begin{figure}
	\includegraphics{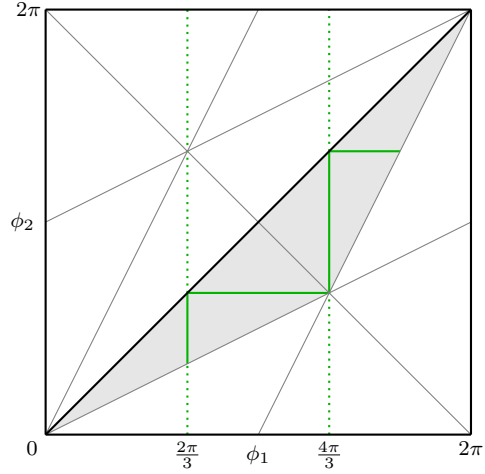}
	\centering
	\caption{A fundamental domain for the $\mathfrak{S}_{3}$-action on $\Om$ (shaded), and the images of the line segments $\xi \mapsto (2\pi/3,\xi)$ and $\xi \mapsto (4\pi/3,\xi)$ therein under the $\mathfrak{S}_{3}$-action}
	\label{FigureLines}
\end{figure}

\begin{lemma} \label{LemmaConstantAlongAntidiagonal}
	Assume that the cocycle $c$ is alternating. Then the function $f_{0}$ defined by formula \eqref{EquationFormulaForf0} is locally constant along the antidiagonal $\Deop$ in $\Om$.
\end{lemma}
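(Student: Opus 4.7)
The plan is to read off from the explicit formula \eqref{EquationFormulaForf0} that the two integral terms vanish identically when the point $(\phi_1,\phi_2)$ lies on the antidiagonal, leaving only the initial value $f_0(\omega_\pm)$.

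First I would check that the $N$-integral contributes nothing. On the antidiagonal, write $(\phi_1,\phi_2)=(\phi,2\pi-\phi)$. Then $\Phi(\phi_1,\phi_2)=\phi$ by definition, and the identity $\cot((2\pi-\phi)/2)=\cot(\pi-\phi/2)=-\cot(\phi/2)$ together with formula \eqref{EquationDefinitionOfT} gives $T(\phi,2\pi-\phi)=0$. Consequently the entire $N$-integral in \eqref{EquationFormulaForf0} is zero.

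Next I would show that the $A$-integral also vanishes. The key point is that the antidiagonal is invariant under the $A$-action on $\Om$: the elements $a_s$ are represented in $\mathrm{SU}(1,1)$ by matrices with real entries, so the fractional linear transformation they induce on $S^1\subset\C$ commutes with complex conjugation $z\mapsto \bar z$; in angular coordinates this means $a_s.(2\pi-\phi)=2\pi-a_s.\phi$. Since $\omega_\pm$ lies on the antidiagonal, the whole orbit $\{a_s.\omega_\pm\mid s\in\mathbb{R}\}$ stays on the antidiagonal. Now invoke Proposition~\ref{PropositionSymmetries}(ii), which asserts that $F^\sharp_c$ vanishes along $\Deop$. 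Hence $F^\sharp_c(a_s.\omega_\pm)=0$ for all $s$, and the $A$-integral in \eqref{EquationFormulaForf0} vanishes as well.

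Combining the two steps, \eqref{EquationFormulaForf0} reduces on the antidiagonal to $f_0(\phi,2\pi-\phi)=f_0(\omega_\pm)$, the sign being determined by whether $(\phi,2\pi-\phi)$ lies in $\Om_+$ or $\Om_-$. Thus $f_0$ is constant on each of the two connected components of $\Deop$, i.e.\ locally constant, as claimed. No serious obstacle is expected; the only nontrivial input is the antidiagonal invariance of the $A$-action, which is built into the $\mathrm{SU}(1,1)$-parametrization fixed in Section~\ref{SubSectionTheBoundaryActionInLocalCoordinates}, and the antidiagonal symmetry of $F^\sharp_c$ already proved in Proposition~\ref{PropositionSymmetries}.
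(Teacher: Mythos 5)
Your proposal is correct and follows essentially the same route as the paper: the paper's proof simply invokes Proposition~\ref{PropositionSymmetries}\,(ii) for the vanishing of $F^{\sharp}_{c}$ along $\Deop$ and then reads the conclusion off formula \eqref{EquationFormulaForf0}. You merely make explicit the two facts the paper leaves implicit, namely that $T(\phi,2\pi-\phi)=0$ so the $N$-integral drops out, and that the $A$-orbit of $\omega_{\pm}$ stays on the antidiagonal (which is how $\Deop$, i.e.\ $H_{1}$, was defined in the first place), so the $A$-integrand vanishes identically.
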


\begin{proof}
	Since $c$ is alternating, the inhomogeneity $F^{\sharp}_{c}$ vanishes along the antidiagonal $\Deop$ by Proposition \ref{PropositionSymmetries}\,(ii). The lemma now follows from formula \eqref{EquationFormulaForf0}.
\end{proof}

\begin{example}
	Assume that the cocycle $c$ is alternating. Consider the special solution~$f_{0}$ determined by the initial values $f_{0}(\omega_{\pm})=0$. It is alternating under the action of $\mathfrak{S}_{3}$ by Proposition~\ref{PropositionAlternatingSolution}. Moreover, by Lemma~\ref{LemmaConstantAlongAntidiagonal} it vanishes along the antidiagonal~$\Deop$. Since under the action of $\mathfrak{S}_{3}$ the components of $\Deop$ get identified with the medians of the triangles enclosing the domains $\Om_{+}$ and $\Om_{-}$, we further infer that~$f_{0}$ also vanishes along these medians. Moreover, by Proposition~\ref{PropositionSymmetries}\,(iii) the function $F^{\flat}_{c}$ is symmetric about the antidiagonal. Thus we see from formula \eqref{EquationFormulaForf0} that the special solution~$f_{0}$ is antisymmetric with respect to the antidiagonal, and hence antisymmetric with respect to all medians.
\end{example}

We are now ready to prove Proposition \ref{PropositionBoundednessOfPrimitives}.

\begin{proof}[Proof of Proposition \ref{PropositionBoundednessOfPrimitives}]
	By Proposition \ref{PropositionExplicitPrimitives}\,(iii) the function $f_{0}$ is given by formula \eqref{EquationFormulaForf0}. Hence by Lemma~\ref{LemmaBoundednessAlongLine} it suffices to show that $f_{0}$ is bounded along the line segments $\xi \mapsto (2\pi/3,\xi)$ and $\xi \mapsto (4\pi/3,\xi)$. Since $f_{0}$ is alternating under the action of $\mathfrak{S}_{3}$, it suffices to prove that $f_{0}$ is bounded along the images of these line segments in any fundamental domain for the $\mathfrak{S}_{3}$-action on $\Om$. In fact, we may choose the fundamental domain in such a way that the image line segments lie inside a compact subset of $(0, 2\pi)^2$ (see Figure \ref{FigureLines}). By Lemma \ref{LemmaConstantAlongAntidiagonal} and Lemma \ref{LemmaBoundednessNearDiagonal}, the function $f_{0}$ is then bounded on these line segments.
\end{proof}

Theorem \ref{TheoremMain} follows by combining Lemma \ref{LemmaAlternatingCocycle} and Propositions \ref{PropositionExplicitPrimitives}, \ref{PropositionAlternatingSolution} and \ref{PropositionBoundednessOfPrimitives}.

\begin{appendix}

\section{The Frobenius integrability condition}
\label{SectionTheFrobeniusIntegrabilityCondition}

The goal of this appendix is to prove Proposition \ref{PropositionIntegrabilityCondition}. We have to show that the system
\begin{equation} \label{EquationSystemForfAppendix}
	\left\{
	\begin{aligned}
		L_{K}^{(3)}f &=  0 \\
		L_{A}^{(3)}f &= c^{\sharp} + dv^{\sharp} \\
		L_{N}^{(3)}f &= c^{\flat} + dv^{\flat}
	\end{aligned}
	\right.
\end{equation}
admits a solution $(f,v^{\sharp},v^{\flat})$ if and only if the pair $(v^{\sharp},v^{\flat})$ satisfies the Frobenius system \eqref{EquationFrobeniusSystem}. Here we consider $f$ and $v^{\sharp}$, $v^{\flat}$ as smooth functions on the domains $D \deq [0,2\pi)^{(3)}$ and $[0,2\pi)^{(2)}$, respectively. It will be convenient to replace system \eqref{EquationSystemForfAppendix} by the  the equivalent system
\begin{equation} \label{EquationModifiedSystemForfAppendix}
	\left\{
	\begin{aligned}
		L_{K}^{(3)}f &=  0 \\
		L_{A}^{(3)}f &= c^{\sharp} + dv^{\sharp} \\
		L_{N}^{(3)}f - L_{K}^{(3)}f &= c^{\flat} + dv^{\flat}.
	\end{aligned}
	\right.
\end{equation}
Consider the product $D \times \R$. We denote the coordinates on $D$ by $(\th_{0},\th_{1},\th_{2})$ and the coordinate on $\R$ by $\th_{3}$. The graph $\Gamma_{f} \deq \{ ( (\th_{0},\th_{1},\th_{2}),f(\th_{0},\th_{1},\th_{2}) ) \}$ of the function $f$ is a $3$-dimensional submanifold of $D \times \R$. Define vector fields $X$, $Y$, $Z$ on $D \times \R$ by
\[
\begin{split}
	{X} &\deq L_{K}^{(3)}, \\
	{Y} &\deq L_{A}^{(3)} + \bigl( c^{\sharp} + dv^{\sharp} \bigr) \, \partial_{\th_{3}}, \\
	{Z} &\deq L_{N}^{(3)} - L_{K}^{(3)} + \bigl( c^{\flat} + dv^{\flat} \bigr) \, \partial_{\th_{3}}.
\end{split}
\]
Since $G$ acts strictly $3$-transitively on $D$, it follows that these vector fields span a distribution~$E$ of constant rank $3$ on $D \times \R$. Then a triple $(f,v^{\sharp},v^{\flat})$ is a solution of system \eqref{EquationModifiedSystemForfAppendix} if and only if the graph $\Gamma_{f}$ is an integral manifold for $E$. Hence the Frobenius theorem (see e.\,g.\,\cite[Ch.\,11.]{Lee/Manifolds-and-differential-geometry}) implies that system \eqref{EquationModifiedSystemForfAppendix} admits a solution $(f,v^{\sharp},v^{\flat})$ if and only if the distribution $E$ is integrable, i.\,e.\,, the vector fields $X$, $Y$, $Z$ form an involutive system. Note that
\[
\left[ L_{K}^{(3)}, L_{A}^{(3)} \right] = L_{K}^{(3)} - L_{N}^{(3)}, \quad \left[ L_{K}^{(3)}, L_{N}^{(3)} - L_{K}^{(3)} \right] = L_{A}^{(3)}, \quad \left[ L_{A}^{(3)}, L_{N}^{(3)} - L_{K}^{(3)} \right] = L_{K}^{(3)}.
\]
Hence the vector fields $X$, $Y$, $Z$ form an involutive system if and only if
\begin{equation} \label{EquationFrobeniusCondition}
	[X,Y] = -Z, \quad [X,Z] = Y, \quad [Y,Z] = X.
\end{equation}
We shall now make these conditions explicit. We start with two preliminary lemmas.

\begin{lemma} \label{LemmaKActionOncSharpAndcFlat}
	The functions $c^\sharp$ and $c^\flat$ defined in \eqref{EquationDefinitionOfcsharp} and \eqref{EquationDefinitionOfcflat} satisfy
\[
L_{K}^{(3)} c^{\sharp} = -c^{\flat}, \quad L_{K}^{(3)}c^{\flat} =  c^{\sharp}.
\]
\end{lemma}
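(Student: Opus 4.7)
The plan is to exploit the $K$-invariance of the cocycle $c$ in order to move the $L_K^{(3)}$-action (which differentiates in the variables $\theta_0,\theta_1,\theta_2$) onto the integration variables $\eta,\varphi$, where it can then be transferred onto the weights $\cos(\varphi)$ and $\sin(\varphi)$ by a change of variable.

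More concretely, I would first fix an invariant representative $\tilde c$ of $c$ as provided by Lemma \ref{LemmaInvariantRepresentatives}, so that the $K$-invariance identity
\[
\tilde c(\eta,\varphi,\theta_0+\xi,\theta_1+\xi,\theta_2+\xi) = \tilde c(\eta-\xi,\varphi-\xi,\theta_0,\theta_1,\theta_2)
\]
holds pointwise for every $\xi \in \mathbb{R}$. Substituting this into the defining integral \eqref{EquationDefinitionOfcsharp} and then performing the translation $\eta \mapsto \eta+\xi$, $\varphi \mapsto \varphi+\xi$ (allowed by $K$-invariance of $\mu_K$) gives
\[
c^\sharp(\theta_0+\xi,\theta_1+\xi,\theta_2+\xi) = \fint\fint \cos(\varphi+\xi)\, \tilde c(\eta,\varphi,\theta_0,\theta_1,\theta_2)\, d\eta\, d\varphi.
\]
Since $L_K^{(3)}$ is by definition the infinitesimal generator of the diagonal rotation in $(\theta_0,\theta_1,\theta_2)$, I differentiate this identity at $\xi = 0$. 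The derivative falls on $\cos(\varphi+\xi)$ and produces $-\sin(\varphi)$, which by \eqref{EquationDefinitionOfcflat} yields exactly $-c^\flat$. The second identity is obtained identically, with $\cos$ replaced by $\sin$ and differentiation at $\xi=0$ producing $\cos(\varphi)$.

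The only minor technicality is justifying the interchange of $\partial_\xi$ with the double integral; this is immediate since $\tilde c \in \mathcal L^\infty$ is bounded and the weights $\cos(\varphi+\xi)$, $\sin(\varphi+\xi)$ have uniformly bounded $\xi$-derivatives, so dominated convergence applies. There is no real obstacle here — the lemma is essentially a bookkeeping computation, whose content is the compatibility between the averaging weights $\cos(\varphi),\sin(\varphi)$ in the definitions of $c^\sharp,c^\flat$ and the rotation $L_K^{(3)}$, mediated by the $K$-invariance of $c$.
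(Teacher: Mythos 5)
Your argument is correct and is essentially the paper's own proof: the paper likewise uses $K$-invariance of the cocycle and of $\mu_K$ to shift the diagonal rotation onto the weight, differentiating $\lambda(\varphi+\xi)$ at $\xi=0$ (stated there for a general smooth weight $\lambda$ and then specialized to $\cos$ and $\sin$). Your added remarks on choosing an invariant representative and justifying the differentiation under the integral are fine but change nothing of substance.
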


\pagebreak

\begin{proof}
	More generally, we prove that for any function $\lambda \in C^\infty((0,2\pi))$, the function
\[
c_{\lambda}(\th_{0}, \th_{1}, \th_{2}) \deq \fint \fint \lambda(\varphi) \, c(\eta, \varphi, \th_{0}, \th_{1}, \th_{2}) \, d\eta \, d\varphi
\]
satisfies $L_{K}^{(3)} c_{\lambda} = c_{\lambda'}$. Indeed, by $K$-invariance of the cocycle $c$ and the measure, we have
\[
\begin{split}
	L_{K}^{(3)}{c_{\lambda}}(\th_{0}, \th_{1}, \th_{2}) &= \left.\frac{d}{d\xi}\right|_{\xi=0} \fint \fint \lambda(\varphi) \, c(\eta, \varphi, \th_{0}+\xi, \th_{1}+\xi, \th_{2}+\xi) \, d\eta \, d\varphi \\
	&= \left.\frac{d}{d\xi}\right|_{\xi=0} \fint \fint \lambda(\varphi) \, c(\eta-\xi, \varphi-\xi, \th_{0}, \th_{1}, \th_{2}) \, d\eta \, d\varphi \\
	&= \fint \fint \left.\frac{d}{d\xi}\right|_{\xi=0} \lambda(\varphi+\xi) \, c(\eta, \varphi, \th_{0}, \th_{1}, \th_{2}) \, d\eta \, d\varphi \\
	&= \fint \fint \lambda'(\varphi) \, c(\eta, \varphi, \th_{0}, \th_{1}, \th_{2}) \, d\eta \, d\varphi. \qedhere
\end{split}
\]
\end{proof}

\begin{lemma} \label{LemmaDerivativeOfccheck}
	The function $\check{c}$ defined in \eqref{EquationDefinitionOfccheck} satisfies
\begin{equation} \label{EquationDerivativeOfccheck}
	L_{K}^{(3)} c^{\sharp} - L_{N}^{(3)} c^{\sharp} + L_{A}^{(3)} c^{\flat} = - d \check{c}.
\end{equation}
\end{lemma}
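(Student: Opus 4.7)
The strategy is to compute each of the three terms $L_{K}^{(3)} c^{\sharp}$, $L_{N}^{(3)} c^{\sharp}$, $L_{A}^{(3)} c^{\flat}$ as a weighted double integral of $c$ against an explicit trigonometric kernel in the first two variables, and then to match the resulting sum with $-d\check{c}$ via a single application of the cocycle identity for the $5$-cocycle $c$. For this, the natural object to consider is, for any smooth weight $\lambda \colon (S^1)^{(2)} \to \R$,
\[
c_{\lambda}(\th_{0},\th_{1},\th_{2}) \deq \fint \fint \lambda(\eta,\varphi)\, c(\eta,\varphi,\th_{0},\th_{1},\th_{2})\, d\eta\, d\varphi,
\]
so that $c^{\sharp} = c_{\cos\varphi}$ and $c^{\flat} = c_{\sin\varphi}$.

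Following the template of the proofs of Lemmas \ref{LemmaKActionOncSharpAndcFlat} and \ref{LemmaOperatorsActOnPrimitive}, I would pick an invariant representative $\tilde{c}$, use $A$- (resp.\ $N$-) invariance of $\tilde{c}$ to replace the shift $\th_{j}\mapsto a_{s}.\th_{j}$ by the inverse shift on $(\eta,\varphi)$, and then change variables $(\eta,\varphi)\mapsto(a_{s}.\eta,a_{s}.\varphi)$, picking up two Radon--Nikodym factors. Differentiating at $s=0$ and invoking $\tfrac{d}{ds}\big|_{0}a_{s}.\eta = \sin\eta$ together with $\tfrac{d}{ds}\big|_{0}\tfrac{d(a_{s}.\eta)}{d\eta} = \cos\eta$ (and the analogues $1-\cos\eta$ and $\sin\eta$ for $N$) yields $L_{A}^{(3)} c_{\lambda} = c_{\lambda_{A}}$ and $L_{N}^{(3)} c_{\lambda} = c_{\lambda_{N}}$ with
\[
\lambda_{A} = \sin\eta\,\partial_{\eta}\lambda + \sin\varphi\,\partial_{\varphi}\lambda + (\cos\eta+\cos\varphi)\,\lambda,
\]
\[
\lambda_{N} = (1-\cos\eta)\,\partial_{\eta}\lambda + (1-\cos\varphi)\,\partial_{\varphi}\lambda + (\sin\eta+\sin\varphi)\,\lambda.
\]
Specializing to $\lambda=\cos\varphi$ and $\lambda=\sin\varphi$ and invoking $L_{K}^{(3)} c^{\sharp} = -c^{\flat} = c_{-\sin\varphi}$ from Lemma \ref{LemmaKActionOncSharpAndcFlat}, the two $2\sin\varphi\cos\varphi$ contributions coming from $L_{N}^{(3)} c^{\sharp}$ and $L_{A}^{(3)} c^{\flat}$ cancel, as do the two $\sin\varphi$ contributions coming from $L_{K}^{(3)} c^{\sharp}$ and $L_{N}^{(3)} c^{\sharp}$; what remains is precisely the antisymmetric bilinear kernel
\[
L_{K}^{(3)} c^{\sharp} - L_{N}^{(3)} c^{\sharp} + L_{A}^{(3)} c^{\flat} = c_{\cos\eta\sin\varphi - \sin\eta\cos\varphi} = c_{-\sin(\eta-\varphi)}.
\]

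It then remains to identify $c_{\sin(\eta-\varphi)}$ with $d\check{c}$. For this I would evaluate the cocycle identity $0 = dc(\eta,\varphi,\psi,\th_{0},\th_{1},\th_{2})$ and integrate against $\sin(\eta-\varphi)\,d\eta\,d\varphi\,d\psi$. Among the six resulting terms, the two obtained by omitting $\eta$ or $\varphi$ vanish because $\fint \sin(\eta-\varphi)\,d\eta = \fint \sin(\eta-\varphi)\,d\varphi = 0$; the term obtained by omitting $\psi$ is exactly $c_{\sin(\eta-\varphi)}$; and the three remaining terms assemble, with alternating signs, into $d\check{c}(\th_{0},\th_{1},\th_{2})$ by the definition \eqref{EquationDefinitionOfccheck}. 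This gives $d\check{c} = c_{\sin(\eta-\varphi)}$, and combining with the previous display proves \eqref{EquationDerivativeOfccheck}.

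The main obstacle is purely the trigonometric bookkeeping in the computation of $\lambda_{A}$ and $\lambda_{N}$ for the two choices $\lambda = \cos\varphi$ and $\lambda = \sin\varphi$: one has to track the four contributions in each expansion (derivatives of $\lambda$ against $\sin$ or $1-\cos$, plus Radon--Nikodym contributions $(\cos\eta+\cos\varphi)\lambda$ or $(\sin\eta+\sin\varphi)\lambda$) and verify that everything except the skew-symmetric piece $\cos\eta\sin\varphi - \sin\eta\cos\varphi$ cancels out. Once this is in hand, the passage to $d\check{c}$ is a direct consequence of the vanishing of $\fint \sin(\eta-\varphi)\,d\eta$ over the full circle.
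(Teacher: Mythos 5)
Your proposal is correct and follows essentially the same route as the paper: you compute $L_{K}^{(3)}c^{\sharp}$, $L_{N}^{(3)}c^{\sharp}$ and $L_{A}^{(3)}c^{\flat}$ by shifting the group action onto $(\eta,\varphi)$ with the two Radon--Nikodym factors (your general-$\lambda$ bookkeeping just packages the paper's three separate computations), arrive at the kernel $-\sin(\eta-\varphi)$, and then identify $\fint\fint \sin(\eta-\varphi)\,c\,d\eta\,d\varphi$ with $d\check{c}$ via the cocycle identity integrated against $\sin(\eta-\varphi)$, exactly as in the paper. The signs and cancellations you indicate all check out.
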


\begin{proof}
	Let us consider the left-hand side of \eqref{EquationDerivativeOfccheck}. In a first step, using $G$-invariance of $c$ and $K$-invariance of the measure, we compute
\[
\begin{split}
	L_{K}^{(3)} c^{\sharp}(\th_{0}, \th_{1}, \th_{2}) &= \left.\frac{d}{d\xi} \right|_{\xi=0} \fint \fint \cos(\varphi) \, c(\eta, \varphi, \th_{0}+\xi, \th_{1}+\xi, \th_{2}+\xi) \, d\eta \, d\varphi \\
	&= \fint \fint \left.\frac{d}{d\xi} \cos(\varphi+\xi) \right|_{\xi=0} c(\eta, \varphi, \th_{0}, \th_{1}, \th_{2}) \, d\eta \, d\varphi
\end{split}
\]
and
\[
\begin{split}
	L_{N}^{(3)} c^{\sharp}(\th_{0}, \th_{1}, \th_{2}) &= \left.\frac{d}{dt} \right|_{t=0} \fint \fint \cos(\varphi) \, c(\eta, \varphi, n_{t}.\th_{0}, n_{t}.\th_{1}, n_{t}.\th_{2}) \, d\eta \, d\varphi \\
	&= \fint \fint \left.\frac{d}{dt} \left( \cos(n_{t}.\varphi) \, \frac{d(n_{t}.\varphi)}{d\varphi} \frac{d(n_{t}.\eta)}{d\eta} \right) \right|_{t=0} c(\eta, \varphi, \th_{0}, \th_{1}, \th_{2}) \, d\eta \, d\varphi,
\end{split}
\]
and similarly
\[
\begin{split}
	L_{A}^{(3)} c^{\flat}(\th_{0}, \th_{1}, \th_{2}) &= \left.\frac{d}{ds} \right|_{s=0} \fint \fint \sin(\varphi) \, c(\eta, \varphi, a_{s}.\th_{0}, a_{s}.\th_{1}, a_{s}.\th_{2}) \, d\eta \, d\varphi \\
	&= \fint \fint  \left.\frac{d}{ds} \left( \sin(a_{s}.\varphi) \, \frac{d(a_{s}.\varphi)}{d\varphi} \frac{d(a_{s}.\eta)}{d\eta} \right) \right|_{s=0} c(\eta, \varphi, \th_{0}, \th_{1}, \th_{2}) \, d\eta \, d\varphi.
\end{split}
\]

\noindent Second, using Lemma \ref{LemmaInfinitesimalANAction} we compute the derivatives appearing in the above formulas. Firstly, we have $\left.\frac{d}{d\xi}(\cos(\varphi+\xi))\right|_{\xi=0} = -\sin(\varphi)$. Moreover,

\pagebreak

\[
\begin{split}
	&\left.\frac{d}{dt} \! \left( \cos(n_{t}.\varphi) \, \frac{d(n_{t}.\varphi)}{d\varphi} \frac{d(n_{t}.\eta)}{d\eta} \right) \right|_{t=0} \\
	&= \left. -\sin(n_{t}.\varphi) \, \frac{d(n_{t}.\varphi)}{dt} \, \frac{d(n_{t}.\varphi)}{d\varphi} \frac{d(n_{t}.\eta)}{d\eta} \right|_{t=0} + \left.\cos(n_{t}.\varphi) \, \frac{d}{d\varphi} \frac{d(n_{t}.\varphi)}{dt} \frac{d(n_{t}.\eta)}{d\eta} \right|_{t=0} \\
	& \hspace{2cm} {} + \left.\cos(n_{t}.\varphi) \, \frac{d(n_{t}.\varphi)}{d\varphi} \frac{d}{d\eta} \frac{d(n_{t}.\eta)}{dt} \right|_{t=0} \\
	&= \left. -\sin(n_{t}.\varphi) \, \bigl( 1-\cos(n_{t}.\varphi) \bigr) \, \frac{d(n_{t}.\varphi)}{d\varphi} \frac{d(n_{t}.\eta)}{d\eta} \right|_{t=0} \\
	& \hspace{2cm} {} + \left.\cos(n_{t}.\varphi) \, \frac{d}{d\varphi}\bigl( 1-\cos(n_{t}.\varphi) \bigr) \, \frac{d(n_{t}.\eta)}{d\eta} \right|_{t=0} \\
	& \hspace{5cm} {} + \left.\cos(n_{t}.\varphi) \, \frac{d(n_{t}.\varphi)}{d\varphi} \frac{d}{d\eta}\bigl( 1-\cos(n_{t}.\eta) \bigr) \right|_{t=0} \\
	&= -\sin(\varphi) \, (1-\cos(\varphi)) + \cos(\varphi)\sin(\varphi) + \cos(\varphi)\sin(\eta) \\
	&= 2\sin(\varphi) \cos(\varphi) + \cos(\varphi) \sin(\eta) - \sin(\varphi).
\end{split}
\]
Lastly,
\[
\begin{split}
	& \left.\frac{d}{ds} \! \left( \sin(a_{s}.\varphi) \frac{d(a_{s}.\varphi)}{d\varphi} \frac{d(a_{s}.\eta)}{d\eta} \right) \right|_{s=0} \\
	&= \left.\cos(a_{s}.\varphi) \, \frac{d(a_{s}.\varphi)}{ds} \frac{d(a_{s}.\varphi)}{d\varphi} \frac{d(a_{s}.\eta)}{d\eta} \right|_{s=0} + \left.\sin(a_{s}.\varphi) \, \frac{d}{d\varphi} \frac{d(a_{s}.\varphi)}{ds} \frac{d(a_{s}.\eta)}{d\eta} \right|_{s=0} \\
	& \hspace{8cm} {} + \left.\sin(a_{s}.\varphi) \, \frac{d(a_{s}.\varphi)}{d\varphi} \frac{d}{d\eta} \frac{d(a_{s}.\eta)}{ds} \right|_{s=0} \\
	&= \left.\cos(a_{s}.\varphi) \sin(a_{s}.\varphi) \, \frac{d(a_{s}.\varphi)}{d\varphi} \frac{d(a_{s}.\eta)}{d\eta} \right|_{s=0} + \left.\sin(a_{s}.\varphi) \frac{d}{d\varphi} \sin(a_{s}.\varphi) \, \frac{d(a_{s}.\eta)}{d\eta} \right|_{s=0} \\
	& {} \hspace{8cm} + \left.\sin(a_{s}.\varphi) \, \frac{d(a_{s}.\varphi)}{d\varphi} \, \frac{d}{d\eta} \sin(a_{s}.\eta) \right|_{s=0} \\
	&= 2 \sin(\varphi) \cos(\varphi) + \sin(\varphi) \cos(\eta).
\end{split}
\]
Summing up, we obtain
\begin{eqnarray} \label{EquationFrobeniusLemmaLHS} 
\begin{aligned}
	& L_{K}^{(3)} c^{\sharp}(\th_{0}, \th_{1}, \th_{2}) - L_{N}^{(3)} c^{\sharp}(\th_{0}, \th_{1}, \th_{2}) + L_{A}^{(3)} c^{\flat}(\th_{0}, \th_{1}, \th_{2}) \\
	&= \fint \fint \bigl( \sin(\varphi)\cos(\eta) - \cos(\varphi)\sin(\eta) \bigr) \, c(\eta, \varphi, \th_{0}, \th_{1}, \th_{2}) \, d\eta \, d\varphi \\
	&= - \fint \fint \sin(\eta-\varphi) \, c(\eta, \varphi, \th_{0}, \th_{1}, \th_{2}) \, d\eta \, d\varphi.
\end{aligned}
\end{eqnarray}
Now we turn to the computation of the right-hand side of \eqref{EquationDerivativeOfccheck}. The cocycle identity for $c$ yields

\pagebreak

\begin{multline*}
	0 = dc(\eta, \varphi, \psi, \th_{0}, \th_{1}, \th_{2}) = c(\varphi, \psi, \th_{0}, \th_{1}, \th_{2}) - c(\eta, \psi,\th_{0}, \th_{1}, \th_{2}) + c(\eta, \varphi, \th_{0}, \th_{1}, \th_{2}) \\ -\sum_{j=0}^{2} (-1)^{j} \, c(\eta, \varphi, \psi, \th_{0}, \ldots, \widehat{\th_{j}}, \ldots,  \th_{2}).
\end{multline*}
We multiply this identity by $\sin(\eta-\varphi)$ and integrate over the variables $\eta, \varphi, \psi$. Integrating the first term, we get
\[
\begin{split}
	& \fint\fint\fint \sin(\eta-\varphi) \, c(\varphi, \psi, \th_{0}, \th_{1}, \th_{2}) \, d\eta \, d\varphi \, d\psi \\
	&= \fint\fint \left(  \fint \sin(\eta-\varphi) \, d\eta \right) c(\varphi, \psi, \th_{0}, \th_{1}, \th_{2}) \, d\varphi \, d\psi \, = \, 0.
\end{split}
\]
Likewise, the integral of the second term vanishes. We are thus left with
\[
\begin{split}
	& \fint \fint \sin(\eta-\varphi) \, c(\eta, \varphi, \th_{0}, \th_{1}, \th_{2}) \, d\eta \, d\varphi \\
	&= \fint \fint \fint \sin(\eta-\varphi) \left( \sum_{j=0}^{2} (-1)^{j} \, c(\eta, \varphi, \psi, \th_{0}, \ldots, \widehat{\th_{j}}, \ldots,  \th_{2}) \right) d\eta \, d\varphi \, d\psi \\
	&= \sum_{j=0}^{2} (-1)^{j} \fint \fint \fint \sin(\eta-\varphi) \, c(\eta, \varphi, \psi, \th_{0}, \ldots, \widehat{\th_{j}}, \ldots,  \th_{2}) \, d\eta \, d\varphi \, d\psi = d\check{c}(\th_{0}, \th_{1}, \th_{2}).
\end{split}
\]
Comparing this with \eqref{EquationFrobeniusLemmaLHS} above, formula \eqref{EquationDerivativeOfccheck} follows.
\end{proof}

We are now in a position to finish the proof of Proposition \ref{PropositionIntegrabilityCondition} by spelling out the integrability conditions \eqref{EquationFrobeniusCondition}. Consider the first identity in \eqref{EquationFrobeniusCondition}. We have
\[
[X,Y] = \bigl[ L_{K}^{(3)}, L_{A}^{(3)} \bigr] + \bigl( L_{K}^{(3)} (c^{\sharp} +dv^{\sharp}) \bigr) \partial_{\th_{3}} = L_{K}^{(3)} - L_{N}^{(3)} + \bigl( L_{K}^{(3)} (c^{\sharp} +dv^{\sharp}) \bigr) \partial_{\th_{3}}.
\]
Recall from Lemma \ref{LemmaKActionOncSharpAndcFlat} and Lemma \ref{LemmaOperatorsCommuteWithDifferentials} that
\[
L_K^{(3)}c^\sharp = -c^\flat, \quad L_K^{(3)}dv^\sharp = dL_K^{(2)}v^\sharp.
\]
Thus
\[
[X,Y] = L_{K}^{(3)} - L_{N}^{(3)} + \bigl( -c^\flat+dL_K^{(2)}v^\sharp \bigr) \partial_{\th_{3}}.
\]
Comparing this to $-{Z}$ we find
\begin{equation} \label{EquationFrobenius1}
	[X,Y] =-Z \quad \Longleftrightarrow \quad d\bigl( L_{K}^{(2)} v^{\sharp} + v^{\flat} \bigr) = 0.
\end{equation}
Likewise, for the second identity in \eqref{EquationFrobeniusCondition} we have
\begin{equation}\label{EquationFrobenius2}
	[X,Z] = Y \quad \Longleftrightarrow \quad d\bigl( L_{K}^{(2)} v^{\flat} - v^{\sharp} \bigr) = 0.
\end{equation}
Finally, observe that 
\[
\begin{split}
	[Y,Z] &= \bigl[ L_{A}^{(3)},L_{N}^{(3)}-L_{K}^{(3)} \bigr] + \bigl[ L_{A}^{(3)},( c^{\flat}+dv^{\flat} ) \partial_{\th_{3}} \bigr] - \bigl[ L_{N}^{(3)} - L_{K}^{(3)},( c^{\sharp}+dv^{\sharp} ) \partial_{\th_{3}} \bigr] \\
	&= L_{K}^{(3)} + \left( \bigl( L_{K}^{(2)} - L_{N}^{(2)} \bigr) dv^{\sharp} + L_{A}^{(3)} dv^\flat + \bigl( L_{K}^{(3)} - L_{N}^{(3)} \bigr) c^{\sharp} + L_{A}^{(3)} c^{\flat} \right) \! \partial_{\th_{3}}.
\end{split}
\]
By Lemma \ref{LemmaDerivativeOfccheck} and Lemma \ref{LemmaOperatorsCommuteWithDifferentials}, this becomes
\[
[Y,Z] = L_{K}^{(3)} + \left( d\bigl( L_{K}^{(2)} - L_{N}^{(2)} \bigr) v^{\sharp} + d L_{A}^{(2)} v^{\flat} - d\check{c} \right) \! \partial_{\th_{3}}.
\]
We deduce that
\begin{equation} \label{EquationFrobenius3}
	[Y,Z] = X \quad \Longleftrightarrow \quad d\bigl( L_{K}^{(2)} v^{\sharp} - L_{N}^{(2)} v^{\sharp} + L_{A}^{(2)} v^{\flat} - \check{c} \bigr) = 0.
\end{equation}
Combining \eqref{EquationFrobenius1}, \eqref{EquationFrobenius2} and \eqref{EquationFrobenius3}, Proposition \ref{PropositionIntegrabilityCondition} now follows from \eqref{EquationFrobeniusCondition}.

\end{appendix}

\bibliographystyle{amsplain}

\end{document}